\documentclass[11pt,reqno]{amsart}
\usepackage{amsmath}
\usepackage{amsthm}
\usepackage{amsfonts}
\usepackage{amssymb}
\usepackage{tikz}
\usepackage{float}
\usepackage[all]{xy}
\usepackage{hyperref}
\usepackage{enumerate}
\usepackage{enumitem}


\newcommand{\op}{\ensuremath{^{\mathrm{op}}}}

\newcommand{\bA}{\mathbb{A}}

\newcommand{\bZ}{\mathbb{Z}}

\newcommand{\sfS}{\mathsf{S}}
\newcommand{\sfT}{\mathsf{T}}

\newcommand{\N}{\mathbb{N}}

%
%
\newcommand{\cA}{{\mathcal A}}
\newcommand{\cB}{{\mathcal B}}
\newcommand{\cC}{{\mathcal C}}
\newcommand{\cD}{{\mathcal D}}
\newcommand{\cE}{{\mathcal E}}

\newcommand{\cG}{{\mathcal G}}

\newcommand{\cI}{{\mathcal I}}

\newcommand{\cP}{{\mathcal P}}

\newcommand{\cX}{{\mathcal X}}
\newcommand{\cY}{{\mathcal Y}}

%
%

%
%

\newcommand{\ob}{\operatorname{ob}}
\newcommand{\md}{\operatorname{mod}}
\newcommand{\Md}{\operatorname{Mod}}

\newcommand{\im}{\operatorname{im}}

\newcommand{\proj}{\operatorname{proj}}

\newcommand{\res}{\operatorname{res}}

\newcommand{\emphbf}[1]{\emph{\textbf{#1}}}

%
%

\newcommand{\unit}[2]{\alpha^{#1\dashv #2}}
\newcommand{\counit}[2]{\beta^{#1\dashv #2}}
\newcommand{\adjiso}[2]{\phi^{#1\dashv #2}}

\newcommand{\relGinj}[2]{\cG\cI_{#1}(#2 )}
\newcommand{\relGproj}[2]{\cG\cP_{#1}(#2 )}

\newcommand{\Mor}{\operatorname{Mor}}
\newcommand{\mor}{\operatorname{mor}}

%
%

\newcommand{\pdim}{\operatorname{proj.dim}}

\newcommand{\flatdim}{\operatorname{flat.dim}}

%
%

%
%

\newcommand{\Hom}{\operatorname{Hom}}

\newcommand{\Ext}{\operatorname{Ext}}

\newcommand{\Tor}{\operatorname{Tor}}

%
%

\newcommand{\Ker}{\operatorname{Ker}}
\newcommand{\Coker}{\operatorname{Coker}}

\usetikzlibrary{matrix,arrows}
\title[A generalization of the Nakayama functor]{A generalization of the Nakayama functor}

\date{\today}

\keywords{Abelian category; Gorenstein homological algebra; Gorenstein ring; homological algebra; monad }

\author{Sondre Kvamme}

\address{Laboratoire de Math\'ematiques d'Orsay, Universit\'e Paris-Sud, 91405 Orsay, France}

\email{sondre.kvamme@u-psud.fr}

\begin{document}

\newtheorem{Theorem}[equation]{Theorem}
\newtheorem{Lemma}[equation]{Lemma}
\newtheorem{Corollary}[equation]{Corollary}
\newtheorem{Proposition}[equation]{Proposition}

\theoremstyle{definition}
\newtheorem{Definition}[equation]{Definition}
\newtheorem{Example}[equation]{Example}
\newtheorem{Remark}[equation]{Remark}
\newtheorem{Setting}[equation]{Setting}
\newtheorem{Conjecture}[equation]{Conjecture}

\thanks{This is part of the authors PhD thesis. The author thanks Gustavo Jasso, Julian K\"ulshammer, Rosanna Laking, and  Jan Schr\"oer for helpful discussions and comments and on a previous version of this paper. He would also like to thank the referee for helpful comments and suggestions. The work was made possible by the funding provided by the \emph{Bonn International Graduate School in Mathematics}.}

\subjclass[2010]{18E10, 16E65, 16D90;}

\begin{abstract}
In this paper we introduce a generalization of the Nakayama functor for finite-dimensional algebras. This is obtained by abstracting its interaction with the forgetful functor to vector spaces. In particular, we characterize the Nakayama functor in terms of an ambidextrous adjunction of monads and comonads. In the second part we develop a theory of Gorenstein homological algebra for such Nakayama functor. We obtain analogues of several classical results for Iwanaga-Gorenstein algebras. One of our main examples is the module category $\Lambda\text{-}\Md$ of a $k$-algebra $\Lambda$, where $k$ is a commutative ring and $\Lambda$ is finitely generated projective as a $k$-module. 
\end{abstract}

\maketitle

\setcounter{tocdepth}{2}
\numberwithin{equation}{subsection}
\tableofcontents


\section{Introduction}

\subsection{Introduction}
Let $\Lambda$ be a finite-dimensional $k$-algebra, and let $D(\Lambda)=\Hom_k(\Lambda,k)$ be the $k$-dual. Consider the Nakayama functor 
\[
\nu= D(\Lambda)\otimes_{\Lambda}-\colon \Lambda\text{-}\md\to \Lambda\text{-}\md
\]
It is used to define the Auslander-Reiten translation, which is a fundamental tool for doing Auslander-Reiten theory. It therefore plays an important role in determining the structure of  $\Lambda\text{-}\md$. On the other hand, we also have the well developed theory of Gorenstein homological algebra, see for example \cite{AB69,AB89,Buc86,EEG08,EJ95,EJ11,EJ11a,Hol04,J07}. This theory has been studied in the context of finite-dimensional algebras, see for example \cite{AR91a,BK08,Che10,She16}, and in this case many of the statements are linked to the Nakayama functor. For example, a finite-dimensional algebra $\Lambda$ is Iwanaga-Gorenstein if and only if 
\[
\pdim D(\Lambda)_{\Lambda}<\infty \quad \text{and} \quad \pdim {}_{\Lambda}D(\Lambda)<\infty.
\]
and in this case it follows from a result by Zaks \cite{Zak69} that  
\[
\pdim D(\Lambda)_{\Lambda}= \pdim {}_{\Lambda}D(\Lambda).
\]
where this number is the Gorenstein dimension of $\Lambda$. These are statements about the vanishing of the left and right derived functor of the Nakayama functor and its right adjoint, respectively. 

The aim of this paper is twofold. We first give a generalization of the Nakayama functor. This is obtained by observing that it interacts nicely with the adjoint pair $f_!\dashv f^*$, where 
\[
f^*\colon \res^{\Lambda}_k\colon \Lambda\text{-}\md \to k\text{-}\md
\]
is the restriction functor, and $f_!:=\Lambda\otimes_k -\colon k\text{-}\md\to \Lambda\text{-}\md$ is its left adjoint. More precisely, we make the following definition.

\begin{Definition}[Definition \ref{Nakayama functor for adjoint pair}]\label{Nakayama functor for adjoint pair introduction}
Let $\cD$ be a preadditive category and $\cA$ an abelian category, and let $f^*\colon \cA\to \cD$ be a faithful functor with left adjoint $f_!\colon \cD\to \cA$. A \emphbf{Nakayama functor} relative to $f_!\dashv f^*$ is a functor $\nu\colon \mathcal{A}\to \mathcal{A}$ with  a right adjoint $\nu^-$ satisfying:
\begin{enumerate}
\item $\nu\circ f_!$ is right adjoint to $f^*$;
\item The unit of $\nu\dashv\nu^-$ induces an isomorphism $f_!\xrightarrow{\cong} \nu^-\circ \nu \circ f_!$ when precomposed with $f_!$.
\end{enumerate}
\end{Definition}
The Nakayama functor is unique up to unique equivalence, see Theorem \ref{Uniqueness Nakayama functor adjoint pairs}. Furthermore, a choice of a Nakayama functor is the same as a choice of an isomorphism of certain categories, or a choice of an isomorphism of certain monads, see Proposition \ref{Extending Nakayama functor Adjoint pair} and Theorem \ref{Isomorphism of monads adjoint pairs}.

We have the following example.

\begin{Example}[Example \ref{One algebra}]\label{Commutative ring}
Let $k$ be a commutative ring, and let $\Lambda$ be a $k$-algebra which is finitely generated projective as a $k$-module. Orders over complete regular local rings \cite{Iya01} are examples of such algebras. Consider the restriction functor 
\[
f^*:= \res^{\Lambda}_k\colon \Lambda\text{-}\Md \to k\text{-}\Md
\]
with left adjoint $f_!:=\Lambda\otimes_k -\colon k\text{-}\Md\to \Lambda\text{-}\Md$. Then the functor 
\[
\nu_{\Lambda\text{-}\Md}:=D(\Lambda)\otimes_{\Lambda}-\colon \Lambda\text{-}\Md\to \Lambda\text{-}\Md
\] 
is a Nakayama functor relative to $f_!\dashv f^*$, where $D:=\Hom_k(-,k)$ is the algebraic dual.
\end{Example} 
This gives the classical Nakayama functor for a finite-dimensional algebra. For more examples see the end of Subsection \ref{Nakayama functors subsection}. In fact, for any commutative ring $k$, any $k$-linear abelian category $\cB$, and any small, $k$-linear, locally bounded and Hom-finite category $\cC$, the functor category $\cA=\cB^{\cC}$ has a Nakayama functor relative to an adjoint pair, see Theorem \ref{Theorem:5}. Such categories $\cC$ are also studied in \cite{DSS17}.

The notion of a Nakayama functor relative to an adjoint pair can also be phrased in terms of monads and comonads. In fact, if $f_!\dashv f^*$ has a Nakayama functor, then $f^*$ is both faithful and exact. Hence, if $\cD$ is abelian, then by Beck's monadicity theorem (see Theorem 4.4.4 in \cite{Bor94a} or Theorem \ref{Weak Beck's monadicity} below) we have an equivalence $\cD^{\sfT}\cong \cA$ commuting up to isomorphism with $f^*$ and the forgetful functor $U^{\sfT}\colon \cD^{\sfT}\to \cD$. Here $\cD^{\sfT}$ is the Eilenberg-Moore category of the induced monad $\sfT$ on $\cD$ coming from the adjunction, see Example \ref{example:1} and Definition \ref{Eilenberg-Moore category}.  We determine necessary and sufficient conditions on $\sfT$ for the existence of a Nakayama functor.

\begin{Theorem}[Theorem \ref{Admissible adjunction theorem}]\label{Admissible adjunction theorem introduction}
Let $\sfT$ be a monad on an abelian category $\cB$, let $\cB^{\sfT}$ be the Eilenberg-Moore category of $\sfT$, and let $U^{\sfT}\colon \cB^{\sfT} \to \cB$ be the forgetful functor with left adjoint $F^{\sfT}\colon \cB\to \cB^{\sfT}$. Then the adjoint pair $F^{\sfT}\dashv U^{\sfT}$ has a Nakayama functor if and only if there exists a comonad $\sfS$ and an ambidextrous adjunction $\sfS\dashv \sfT\dashv \sfS$.   
\end{Theorem}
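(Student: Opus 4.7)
Suppose $\nu\colon \cB^\sfT \to \cB^\sfT$ is a Nakayama functor with right adjoint $\nu^-$. The plan is to set $\sfS := U^\sfT \circ \nu \circ F^\sfT$ and to verify both halves of the double adjunction $\sfS \dashv \sfT \dashv \sfS$ directly. The right adjunction $\sfT \dashv \sfS$ is immediate: condition (i) provides the adjoint triple $F^\sfT \dashv U^\sfT \dashv \nu F^\sfT$, whose outer composition is $\sfT = U^\sfT F^\sfT \dashv U^\sfT \nu F^\sfT = \sfS$. The left adjunction $\sfS \dashv \sfT$ follows from a four-step chain of natural isomorphisms starting from $\Hom_\cB(\sfS X, Y) = \Hom_\cB(U^\sfT \nu F^\sfT X, Y)$: apply $U^\sfT \dashv \nu F^\sfT$ from (i), then $\nu \dashv \nu^-$, then replace $\nu^- \nu F^\sfT Y$ by $F^\sfT Y$ using condition (ii), and finally apply $F^\sfT \dashv U^\sfT$ to arrive at $\Hom_\cB(X, \sfT Y)$. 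The comonad structure on $\sfS$ is then obtained by mating the monad structure of $\sfT$ across $\sfT \dashv \sfS$.

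\textbf{Reverse direction.} Given a comonad $\sfS$ with $\sfS \dashv \sfT \dashv \sfS$, the adjunction $\sfT \dashv \sfS$ yields a canonical isomorphism $\Phi\colon \cB^\sfT \xrightarrow{\cong} \cB_\sfS$ between algebras of $\sfT$ and coalgebras of $\sfS$ that intertwines the forgetful functors. Since $U_\sfS$ admits the cofree coalgebra $F_\sfS(X) = (\sfS X, \delta_X)$ as a right adjoint, pulling back through $\Phi$ produces a right adjoint $R\colon \cB \to \cB^\sfT$ of $U^\sfT$ satisfying $U^\sfT R = \sfS$. I would then define $\nu\colon \cB^\sfT \to \cB^\sfT$ by
\[
\nu(A) \;:=\; \mathrm{coeq}\bigl( R(\sfT U^\sfT A) \rightrightarrows R(U^\sfT A) \bigr),
\]
namely by imposing $\nu F^\sfT := R$ and applying $\nu$ to the canonical reflexive-coequalizer presentation of $A$; the two parallel arrows, which live between free algebras, are translated into arrows between $R$-images via the bijection $\Hom_\cB(Y, \sfT Z) \cong \Hom_\cB(\sfS Y, Z)$ supplied by the second adjunction $\sfS \dashv \sfT$. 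Since $\sfT$ has both adjoints it is exact, so $\cB^\sfT$ is abelian; $\nu$ preserves colimits by construction and therefore admits a right adjoint $\nu^-$.

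\textbf{Verification of the Nakayama conditions.} Condition (i) holds by construction: on a free algebra $A = F^\sfT X$ the canonical presentation splits via the monad unit, so the coequalizer collapses to $R(X)$. The delicate point is condition (ii), that the unit $F^\sfT X \to \nu^- \nu F^\sfT X \cong \nu^- R X$ is an isomorphism. I would verify this by a Yoneda computation, evaluating $\Hom_{\cB^\sfT}(B, \nu^- R X)$ through the chain $\nu \dashv \nu^-$, $U^\sfT \dashv R$, $\sfS \dashv \sfT$, $F^\sfT \dashv U^\sfT$, and showing the composite is naturally isomorphic to $\Hom_{\cB^\sfT}(B, F^\sfT X)$. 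The main obstacle is that the adjunction $\sfS \dashv \sfT$ is invoked twice --- once in the definition of $\nu$, once in this verification --- and the two applications must give the same bijection; this compatibility is precisely what the ambidextrous hypothesis encodes, and it is what forces the specific coequalizer formula above rather than any of its variants.
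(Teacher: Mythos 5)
Your forward direction has a genuine gap, and it is precisely the subtlety that the paper flags in the introduction right after stating this theorem. You correctly produce the functor-level adjoint triple $F^\sfT\dashv U^\sfT\dashv \nu F^\sfT$, from which $\sfT\dashv\sfS$ follows with the comonad structure on $S=U^\sfT\nu F^\sfT$ induced by the adjunction $U^\sfT\dashv \nu F^\sfT$; and your four-step chain does give a functor adjunction $S\dashv T$. But an ambidextrous adjunction of monads requires a \emph{single} comonad structure on $S$ to be conjugate to $(\mu,\eta)$ under both adjunctions. The functor adjunction $S\dashv T$ (equivalently, $U^\sfT\nu\dashv F^\sfT$) equips $S=(U^\sfT\nu)\circ F^\sfT$ with its own induced comonad structure, and a priori this has nothing to do with the one you obtained by mating across $\sfT\dashv\sfS$. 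You never reconcile the two. In the paper's notation, you have $\sfS_1=(f^?f_!,\ldots)$ from $f^?\dashv f_!$ and $\sfS_2=(f^*f_*,\ldots)$ from $f^*\dashv f_*$, with the same underlying functor; the paper shows $\sfS_1\cong\sfS_2$ as comonads by the dual of Theorem \ref{Isomorphism of monads adjoint pairs}(\ref{Isomorphism of monads adjoint pairs:1}), which uses the explicit monad isomorphism $\gamma_\nu$ built from the unit $\lambda_{f_!}$. This is the heart of the forward implication, and declaring ``obtained by mating across $\sfT\dashv\sfS$'' does not supply it.

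Your reverse direction is closer in spirit to the paper's internal machinery (your coequalizer formula is essentially the extension procedure of Proposition \ref{Extending Nakayama functor Adjoint pair}), but two steps are not justified. First, ``$\nu$ preserves colimits by construction and therefore admits a right adjoint'' is an appeal to an adjoint functor theorem that is not available for an arbitrary abelian $\cB^\sfT$ (no local presentability is assumed); the paper instead produces $\nu^-$ directly as $\Ker\circ E^-\circ \mor_{f_*}$. Second, you explicitly acknowledge that the verification of condition (ii) rests on a compatibility ``which is precisely what the ambidextrous hypothesis encodes,'' but this is exactly the claim that needs a proof, not a description. The paper handles both of these at once by proving the abstract statement that an isomorphism of comonads $(f^?f_!,\ldots)\cong(f^*f_*,\ldots)$ is equivalent to a choice of Nakayama functor (the dual of Theorem \ref{Isomorphism of monads adjoint pairs}(\ref{Isomorphism of monads adjoint pairs:2}), resting on Theorem \ref{correspondence morphism of monads and morphisms of the images}), and then simply exhibiting such an isomorphism from the ambidexterity. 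If you want to salvage your route, you should first prove something like Proposition \ref{Extending Nakayama functor Adjoint pair} --- that an isomorphism $E\colon\im f_!\to\im f_*$ with $E\circ f_!=f_*$ extends to a Nakayama functor, with $\nu^-$ built as a kernel --- and then check that the bijection $\Hom_\cB(Y,\sfT Z)\cong\Hom_\cB(\sfS Y,Z)$ does define such an $E$ on Kleisli categories, which is where the monad/comonad (not merely functor) adjunction $\sfS\dashv\sfT$ enters.
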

See Definition \ref{adjoint monads and comonads} for the notion of adjunction between monads and comonads. Note that it is not sufficient to just assume there exists an ambidextrous adjunction $S\dashv T\dashv S$ where $T$ is the underlying functor of $\sfT$, since the induced comonad structures on $S$ coming from the adjunctions $S\dashv T$ and $T\dashv S$ might be different. It follows from Beck's monadicity theorem and Theorem \ref{Admissible adjunction theorem introduction} that if $\cD$ is abelian and $f^*\colon \cA\to \cD$ is faithful and exact, then $f_!\dashv f^*$ has a Nakayama functor if and only if there exists a comonad $\sfS$ and an ambidextrous adjunction $\sfS\dashv \sfT\dashv \sfS$, where $\sfT$ is the induced monad on $\cD$ coming from $f_!\dashv f^*$.

The second aim of this paper is to develop a theory of Gorenstein homological algebra for such Nakayama functor. In fact, our constructions and results in this part only depends on the endofunctor $P:=f_!\circ f^*\colon \cA\to \cA$, and not on the adjoint pair itself. 

\begin{Definition}[Definition \ref{definition:N1}]
Let $\cA$ be an abelian category, and let $P\colon \cA\to \cA$ be a generating functor. A \emphbf{Nakayama functor} relative to $P$ is a functor $\nu\colon \cA\to \cA$ with a right adjoint $\nu^-$, and satisfying the following:
\begin{enumerate}
	\item $\nu\circ P$ is right adjoint to $P$\label{Nakayama1};
	\item The unit $\lambda\colon 1_{\cA}\to \nu^-\circ \nu$ induces an isomorphism on objects of the form $P(A)$ for $A\in \cA$. \label{Nakayama2}
\end{enumerate}
\end{Definition}
Here $P$ is generating if the the image of $P$ is a generating subcategory of $\cA$, see Definition \ref{Generating and cogenerating}. 

We define the analogue of Gorenstein projective and injective modules.

\begin{Definition}[Definition \ref{Gorenstein objects for comonad with Nakayama functor paper}]\label{definition:I1.5}
Let $P\colon \cA\to \cA$ be a generating functor with a Nakayama functor $\nu$ relative to $P$, and let $I:=\nu\circ P$ be the right adjoint of $P$.
\begin{enumerate}
\item An object $G\in \cA$ is \emphbf{Gorenstein $P$-projective} if there exists an exact sequence
\[
Q_{\bullet}=\cdots \xrightarrow{f_2} Q_{1} \xrightarrow{f_1} Q_0\xrightarrow{f_0} Q_{-1} \xrightarrow{f_{-1}} \cdots 
\]
in $\cA$, where $Q_i=P(A_i)$ for $A_i\in \cA$, such that the complex $\nu(Q_{\bullet})$ is exact, and with $Z_0(Q_{\bullet})=\Ker f_0=G$;
\item An object $G\in \cA$ is \emphbf{Gorenstein $I$-injective} if there exists an exact sequence
\[
J_{\bullet}=\cdots \xrightarrow{g_2} J_{1} \xrightarrow{g_1} J_0\xrightarrow{g_0} J_{-1} \xrightarrow{g_{-1}} \cdots 
\]
in $\cA$, where $J_i=I(A_i)$ for $A_i\in \cA$, such that the complex $\nu^-(J_{\bullet})$ is exact, and with $Z_0(J_{\bullet})=\Ker g_0=G$.
\end{enumerate} 
\end{Definition} 
For a finite-dimensional algebra $\Lambda$ with $P=f^*f_!$, a $\Lambda$-module is Gorenstein $P$-projective or $I$-injective if and only if it is Gorenstein projective or injective, respectively. Let $\relGproj{P}{\cA}$ and $\relGinj{I}{\cA}$ denote the subcategories of Gorenstein $P$-projective and $I$-injective objects. In Proposition \ref{resolving and coresolving} we show that these are resolving and coresolving subcategories, respectively. Hence, we can define the Gorenstein $P$-projective dimension and $I$-injective dimension $\dim_{\relGproj{P}{\cA}}(-)$  and $\dim_{\relGinj{I}{\cA}}(-)$ by taking resolutions of Gorenstein $P$-projective objects and coresolutions of Gorenstein $I$-injective objects, as described in Subsection \ref{Properties of subcategories}.

We define an analog of Iwanaga-Gorenstein algebras.

\begin{Definition}[Definition \ref{definition:N2}]\label{definition:I3}
Let $P\colon \cA\to \cA$ be a generating functor with a Nakayama functor $\nu$ relative to $P$.  We say that $P$ is \emphbf{Iwanaga-Gorenstein} if there exists an $m\geq 0$ such that $L_i\nu(A)=0$ and $R^i\nu^-(A)=0$ for all $A\in \cA$ and $i>m$. 
\end{Definition}

Note that the left derived functor of $\nu$ and the right derived functor of $\nu^-$ exists without assuming enough projectives or injectives, see Lemma \ref{lemma:N1}. For a finite-dimensional algebra $\Lambda$, the functor $P$ is Iwanaga-Gorenstein if and only if $\Lambda$ is Iwanaga-Gorenstein. More generally, the functor $P=f_!\circ f^*$ in Example \ref{Commutative ring} is Iwanaga-Gorenstein if and only if
\[
\pdim {}_{\Lambda}D(\Lambda)<\infty \quad \text{and} \quad \pdim D(\Lambda)_{\Lambda}<\infty.
\]

We show that if $P$ is Iwanaga-Gorenstein, then there exists a simple description of $\relGproj{P}{\cA}$ and $\relGinj{I}{\cA}$, see Theorem \ref{thm:N1}. Our main result in this part is the following, which is an analogue of Iwanaga's result on the injective dimension of Iwanaga-Gorenstein algebras, and a generalization of \cite[Theorem 2.3.3]{Che10}.

\begin{Theorem}[Theorem \ref{thm:N2}]\label{theorem:I5}
The following are equivalent:
\begin{enumerate}[label=(\alph*)]
	\item $P$ is Iwanaga-Gorenstein;
	\item $\dim_{\relGproj{P}{\cA}}(\cA)< \infty$;
	\item $\dim_{\relGinj{I}{\cA}}(\cA)< \infty$.
\end{enumerate}
Moreover, if this holds, then the following numbers coincide:
\begin{enumerate}
\item $\dim_{\relGproj{P}{\cA}}(\cA)$;
\item $\dim_{\relGinj{I}{\cA}}(\cA)$;
\item The smallest integer $s$ such that $L_i\nu(A)=0$ for all $i>s$ and $A\in \cA$;
\item The smallest integer $t$ such that $R^i\nu^-(A)=0$ for all $i>t$ and $A\in \cA$.
\end{enumerate}
We say that $P$ is $n$\emphbf{-Gorenstein} if this common number is $n$.
\end{Theorem}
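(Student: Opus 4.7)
Let $s := \sup\{i \geq 0 : L_i\nu \neq 0\}$, $t := \sup\{i \geq 0 : R^i\nu^- \neq 0\}$, $p := \dim_{\relGproj{P}{\cA}}(\cA)$, and $q := \dim_{\relGinj{I}{\cA}}(\cA)$, each possibly infinite. Condition (a) says $\max(s,t) < \infty$, while (b) and (c) say $p < \infty$ and $q < \infty$ respectively. The plan is to prove the chain of equalities $p = s = t = q$, from which the equivalence of (a), (b), (c) and the coincidence of the four numerical invariants follow at once.

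The starting point is a pair of vanishing lemmas: every Gorenstein $P$-projective $G$ satisfies $L_i\nu(G) = 0$ for $i > 0$, and every Gorenstein $I$-injective $H$ satisfies $R^i\nu^-(H) = 0$ for $i > 0$. Both are immediate from Definition \ref{definition:I1.5}: the defining complete resolution truncates to a $P$-projective (resp.\ $I$-injective) resolution, which by Lemma \ref{lemma:N1} computes $L_\bullet\nu$ (resp.\ $R^\bullet\nu^-$), and exactness of the complete resolution under $\nu$ (resp.\ $\nu^-$) kills all higher derived functors. From this, the inequalities $s \leq p$ and $t \leq q$ follow by standard dimension-shifting along Gorenstein resolutions of lengths $\leq p$ and $\leq q$.

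For the reverse inequalities $p \leq s$ and $q \leq t$ when these are finite, I would take any $A \in \cA$ and, since $P$ is generating, form a $P$-projective resolution; the $s$-th syzygy $K = \syzygy{P}{s}{A}$ then satisfies $L_i\nu(K) = 0$ for all $i > 0$ by dimension shift. The characterization of $\relGproj{P}{\cA}$ provided by Theorem \ref{thm:N1} identifies such $K$ as Gorenstein $P$-projective, witnessing $\dim_{\relGproj{P}{\cA}}(A) \leq s$; the $I$-injective side is dual. Since invoking Theorem \ref{thm:N1} requires $P$ to be Iwanaga-Gorenstein, this step must be organized so that the Iwanaga-Gorenstein hypothesis is inferred from the one-sided finiteness of $s$ (or $t$) before the syzygy characterization is applied.

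The remaining and main difficulty is then the symmetric identification $s = t$ (equivalently $p = q$), the analogue of the classical Iwanaga--Zaks theorem that the left and right injective dimensions of an Iwanaga-Gorenstein ring coincide. The natural route is via the Nakayama equivalence $\nu \colon \relGproj{P}{\cA} \xrightarrow{\sim} \relGinj{I}{\cA}$ with quasi-inverse $\nu^-$, established earlier in the paper: assuming $s < \infty$ so that $p = s$, one applies $\nu$ termwise to a Gorenstein $P$-projective resolution of an arbitrary object and uses the $\nu \dashv \nu^-$ adjunction to translate vanishing of $L_\bullet \nu$ into vanishing of $R^\bullet \nu^-$, producing $t \leq s$; the reverse is symmetric. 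This Zaks-type symmetry is where the real content lies, and I expect it to be the main obstacle, since the bookkeeping for the other three comparisons is essentially formal once the vanishing lemmas and Theorem \ref{thm:N1} are in place. Once $s = t$ is established, the common value is the integer $n$ named in the final sentence of the theorem.
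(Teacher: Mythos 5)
Your high-level outline identifies the right quantities $s, t, p, q$, and the easy inequalities $s \leq p$ and $t \leq q$ via dimension shifting along finite Gorenstein resolutions are handled correctly. You also correctly observe that Theorem \ref{thm:N1} would give the reverse inequalities $p \leq s$ and $q \leq t$ once $P$ is known to be Iwanaga-Gorenstein. However, the proposal has a genuine gap at exactly the point you flag as the main difficulty, and the route you sketch around it does not go through.

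The problem is a circularity that your proposed organization cannot break. Iwanaga-Gorenstein means \emph{both} $s < \infty$ and $t < \infty$, so Theorem \ref{thm:N1} is unavailable when only one of $s,t$ is assumed finite. To bootstrap from one-sided finiteness you therefore need the Zaks-type symmetry $s < \infty \Rightarrow t < \infty$ \emph{before} invoking Theorem \ref{thm:N1}; but the symmetry argument you outline for that step — applying $\nu$ termwise to a Gorenstein $P$-projective resolution and ``translating vanishing via the adjunction'' — does not produce what is needed. Applying $\nu$ to a finite Gorenstein $P$-projective \emph{resolution} $0 \to G_p \to \cdots \to G_0 \to A \to 0$ yields (at best, and only under extra vanishing hypotheses on $A$) a \emph{resolution} $\nu(G_\bullet) \to \nu(A) \to 0$ by Gorenstein $I$-injectives, not a \emph{coresolution} of an arbitrary object, and it is the coresolution dimension $q$ (equivalently the vanishing range $t$ of $R^\bullet\nu^-$) that must be bounded. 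There is no formal translation from $L_\bullet\nu$-vanishing to $R^\bullet\nu^-$-vanishing via the bare adjunction $\nu \dashv \nu^-$; in homological terms the left derived functor of a left adjoint and the right derived functor of its right adjoint are not dual in any obvious way.

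The paper's proof sidesteps the circularity entirely by establishing, for every $n \geq 0$ and with no Iwanaga-Gorenstein hypothesis, the unconditional equivalence $\dim_{\relGproj{P}{\cA}}(\cA) \leq n \iff \dim_{\relGinj{I}{\cA}}(\cA) \leq n$. The mechanism is the pair of functors $\Omega := \Ker\counit{P}{I}$ and $\Sigma := \Coker\unit{T}{P}$, the induced adjunctions $\nu\circ\Sigma^r \dashv \Omega^r\circ\nu^-$, and the preparatory Lemmas \ref{lemma:N3} and \ref{lemma:N4}; the key step (for $n \geq 2$) exhibits the $n$-th $P$-syzygy $\Omega^n(A)$ as a direct summand of an object already known to lie in $\relGproj{P}{\cA}$, using the unit of the adjunction $\nu\Sigma^{n-2} \dashv \Omega^{n-2}\nu^-$, with separate arguments for $n = 1$ and $n = 0$. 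This is the essential content of the theorem and it is absent from your proposal.
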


For a finite-dimensional algebra $\Lambda$, the endofunctor $P$ is $n$-Gorenstein if and only if $\Lambda$ is $n$-Gorenstein. For the endofunctor $P=f_!\circ f^*$ in Example \ref{Commutative ring} we get the following corollary.

\begin{Corollary}[See Theorem \ref{thm:N3}]\label{corollary:I1}
Let $k$ be a commutative ring, and let $\Lambda$ be a $k$-algebra which is finitely generated and projective as a $k$-module. Assume that
\[
\pdim D(\Lambda)_{\Lambda}<\infty \quad \text{and} \quad \pdim {}_{\Lambda}D({\Lambda})<\infty.
\]
Then 
\[
\pdim D(\Lambda)_{\Lambda}=\pdim {}_{\Lambda}D(\Lambda).
\] 
\end{Corollary}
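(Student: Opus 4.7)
The plan is to apply Theorem \ref{theorem:I5} to the Nakayama functor of Example \ref{Commutative ring} twice---once on $\Lambda\text{-}\Md$ and once on $\Md\text{-}\Lambda$---and then collapse the two resulting equalities via the universal inequality $\flatdim \leq \pdim$.

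First I would unpack Example \ref{Commutative ring} for $\cA = \Lambda\text{-}\Md$. The Nakayama functor is $\nu = D(\Lambda)\otimes_\Lambda-$ with right adjoint $\nu^- = \Hom_\Lambda({}_\Lambda D(\Lambda),-)$, and the derived functors guaranteed by Lemma \ref{lemma:N1} compute the classical invariants $L_i\nu(A) = \Tor_i^\Lambda(D(\Lambda),A)$ and $R^i\nu^-(A) = \Ext^i_\Lambda({}_\Lambda D(\Lambda),A)$. Consequently the integers (3) and (4) of Theorem \ref{theorem:I5} become $\flatdim D(\Lambda)_\Lambda$ and $\pdim {}_\Lambda D(\Lambda)$, respectively. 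The hypothesis that both $\pdim D(\Lambda)_\Lambda$ and $\pdim {}_\Lambda D(\Lambda)$ are finite is precisely the characterization of $P = \Lambda\otimes_k\res_k^\Lambda$ being Iwanaga-Gorenstein noted immediately after Definition \ref{definition:I3}, so Theorem \ref{theorem:I5} yields
\[
\flatdim D(\Lambda)_\Lambda \;=\; \pdim {}_\Lambda D(\Lambda).
\]

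Since $\Lambda\op$ is again a $k$-algebra which is finitely generated projective over $k$, Example \ref{Commutative ring} applies equally to $\Md\text{-}\Lambda \cong \Lambda\op\text{-}\Md$ with Nakayama functor $\nu' = -\otimes_\Lambda D(\Lambda)$. A second, entirely symmetric application of Theorem \ref{theorem:I5} produces the mirror equality
\[
\flatdim {}_\Lambda D(\Lambda) \;=\; \pdim D(\Lambda)_\Lambda.
\]

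Finally, the trivial bound $\flatdim M \leq \pdim M$ (valid on either side) allows us to chain the two identities as
\[
\flatdim D(\Lambda)_\Lambda \;\leq\; \pdim D(\Lambda)_\Lambda \;=\; \flatdim {}_\Lambda D(\Lambda) \;\leq\; \pdim {}_\Lambda D(\Lambda) \;=\; \flatdim D(\Lambda)_\Lambda,
\]
which forces equality throughout; in particular $\pdim D(\Lambda)_\Lambda = \pdim {}_\Lambda D(\Lambda)$. The genuinely hard work sits inside Theorem \ref{theorem:I5}; once that is in hand, the main remaining task is the bookkeeping check that the abstract $L_i\nu$ and $R^i\nu^-$ reduce to the classical $\Tor$ and $\Ext$ against $D(\Lambda)$ in the setup of Example \ref{Commutative ring}, after which the sandwich above completes the deduction.
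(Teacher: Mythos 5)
Your proof is correct, and it takes a genuinely different route than the paper's. The paper obtains Corollary \ref{corollary:I1} by specializing Theorem \ref{thm:N3}, whose proof applies Theorem \ref{thm:N2} once to $\cA=\Lambda\text{-}\Md$ to obtain
\[
\pdim {}_{\Lambda}D(\Lambda) \;=\; n \;=\; \flatdim D(\Lambda)_{\Lambda},
\]
and then upgrades the flat dimension on the right to a projective dimension via the finiteness Lemma \ref{Lemma:N5}: since $D(\Lambda)$ admits a resolution by finitely generated projective right $\Lambda$-modules, the $n$th syzygy is finitely presented and flat, hence projective, so $\flatdim D(\Lambda)_{\Lambda} = \pdim D(\Lambda)_{\Lambda}$. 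You instead apply Theorem \ref{theorem:I5} twice, once over $\Lambda$ and once over $\Lambda\op$, producing the two identities $\flatdim D(\Lambda)_{\Lambda} = \pdim {}_{\Lambda}D(\Lambda)$ and $\flatdim {}_{\Lambda}D(\Lambda) = \pdim D(\Lambda)_{\Lambda}$, and close the loop with the universal inequality $\flatdim \leq \pdim$. This bypasses Lemma \ref{Lemma:N5} and the ``finitely presented flat implies projective'' step entirely, at the mild cost of invoking the opposite algebra, which is legitimate because $\Lambda\op$ is again a $k$-algebra that is finitely generated projective over $k$. Both proofs rest on the same bookkeeping identification $L_i\nu \cong \Tor_i^{\Lambda}(D(\Lambda),-)$ and $R^i\nu^- \cong \Ext^i_{\Lambda}(D(\Lambda),-)$, which you rightly flag; it holds because projective (resp.\ injective) $\Lambda$-modules form an adapted class for $\nu$ (resp.\ $\nu^-$) sitting inside the $P$-projectives (resp.\ $I$-injectives), and the derived functor does not depend on the choice of adapted class. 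Note finally that your sandwich transfers verbatim to the functor-category setting of Theorem \ref{thm:N3}, since $\cC\op$ is again locally bounded and Hom-finite whenever $\cC$ is, so it also yields an alternative, Lemma-\ref{Lemma:N5}-free proof of the general statement.
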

Note that this dimension is $n$ if and only if $P$ is $n$-Gorenstein.  We also have a version of this result for small, locally bounded, and Hom-finite categories, see Theorem \ref{thm:N3}. As far as we know, these results have not been written down before.

The paper is organized as follows. In Section \ref{Preliminaries} we recall the necessary preliminaries on adjunctions and monads that we need. In Section \ref{Nakayama functors section} we define Nakayama functors relative to adjoint pairs and endofunctors, and we provide examples. Furthermore, we give some different characterizations for when such Nakayama functors exist, see Proposition \ref{Extending Nakayama functor Adjoint pair}, Corollary \ref{Computing the Nakayama functor} and Theorem \ref{Isomorphism of monads adjoint pairs}, as well as proving a uniqueness result for such Nakayama functors, see Theorem \ref{Uniqueness Nakayama functor adjoint pairs}. We end the section by proving Theorem \ref{Admissible adjunction theorem introduction} above. In Section \ref{Gorenstein categories for comonads} we develop a theory of Gorenstein homological algebra for Nakayama functors relative to endofunctors. In particular, we define Gorenstein $P$-projective and $I$-injective objects and show that they form a resolving and coresolving subcategory, see Proposition \ref{resolving and coresolving}. We also define the notion of $P$ being Iwanaga-Gorenstein, and we prove Theorem \ref{theorem:I5} above.  In Section \ref{Functor categories section} we show that the functor category $\cB^{\cC}$ has a Nakayama functors relative to an adjoint pair, where $\cB$ is an abelian category and $\cC$ is locally bounded Hom-finite category, see Theorem \ref{Theorem:5}.

\subsection{Conventions}\label{Coventions}
All categories are assumed to be preadditive, and all functors are assumed to be additive. Also, all subcategories are assumed to be full. We always let $\cA$ and $\cB$ denote abelian categories. For a natural transformation $\eta\colon F\to F'$ we let $\eta_c\colon F(c)\to F(c')$ denote the component of $\eta$ at $c$. The natural transformation obtained by precomposing with a functor $G$ is denoted by $\eta_G\colon F\circ G\to F'\circ G$. Unless otherwise specified, $k$ denotes a commutative ring. We let $\proj k$ denote the category of finitely generated projective $k$-modules. For a ring $\Lambda$ we let $\Lambda\text{-}\Md$ and $\Lambda\text{-}\md$ denote the category of left $\Lambda$-modules and finitely presented left $\Lambda$-modules, respectively. 

\section{Background}\label{Preliminaries}
\subsection{Transformation of adjoints}

Let $\cD$ and $\cE$ be preadditive categories. An adjunction $(L,R,\phi,\alpha,\beta)\colon \cD\to \cE$ is given by a pair of functors $L\colon \cD\to \cE$ and $R\colon \cE\to \cD$ together with an isomorphism
\[
\phi\colon \cE(L(D),E)\to \cD(D,R(E)) 
\]
for all pairs $D\in \cD$ and $E\in \cE$, which is natural in $D$ and $E$. We also write $L\dashv R$ if there exists such an adjunction. Here $\alpha_D:=\phi(1_{L(D)})\colon D\to RL(D)$ and $\beta_E:=\phi^{-1}(1_{R(E)})\colon LR(E)\to E$ are the unit and the counit of the adjunction. They satisfy the \emphbf{triangular identities}, i.e $R(\beta_E) \circ \alpha_{R(E)}= 1_{R(E)}$ and $\beta_{L(D)}\circ L(\alpha_D)=1_{L(D)}$. By naturality we have that $\phi(f)=R(f)\circ \alpha_D$ and $\phi^{-1}(g) = \beta_E \circ L(g)$ for any morphisms $f\colon L(D)\to E$ and $g\colon D\to R(E)$. If we are working with several adjunctions, we write $\adjiso{L}{R}$, $\unit{L}{R}$ and $\counit{L}{R}$ for the isomorphism, the unit and the counit to indicate which adjunction we mean. 

Assume $(L_1,R_1,\phi_1,\alpha_1,\beta_1)\colon \cD\to \cE$ and $(L_2,R_2,\phi_2,\alpha_2,\beta_2)\colon \cD\to \cE$ are adjunctions. Following \cite{MLan98}, we say that two natural transformations \\ $\sigma\colon L_1\to L_2$ and $\tau\colon R_2\to R_1$ are \emphbf{conjugate} (for the given adjunctions) if the square
\begin{equation}\label{equation:0.1}
\begin{tikzpicture}[description/.style={fill=white,inner sep=2pt}]
\matrix(m) [matrix of math nodes,row sep=2.5em,column sep=5.0em,text height=1.5ex, text depth=0.25ex] 
{ \cE(L_2(D),E) & \cD(D,R_2(E)) \\
  \cE(L_1(D),E) & \cD(D,R_1(E)) \\};
\path[->]
(m-1-1) edge node[auto] {$\phi_2$} 	    															(m-1-2)
(m-2-1) edge node[auto] {$\phi_1$} 	    																		(m-2-2)

(m-1-1) edge node[auto] {$-\circ \sigma_D$} 	    													  (m-2-1)
(m-1-2) edge node[auto] {$\tau_E\circ -$} 	    												(m-2-2);	
\end{tikzpicture}
\end{equation}
commutes for all pairs $D\in \cD$ and $E\in \cE$.

\begin{Proposition}\label{prop:1.6}
Let $(L_i,R_i,\phi_i,\alpha_i,\beta_i)\colon \cD\to \cE$ be adjunctions for $1\leq i\leq 2$. The following hold:
\begin{enumerate}
	\item\label{prop:1.6,1}If $\sigma\colon L_1\to L_2$ is a natural transformation, then there exists a unique natural transformation $\tau\colon R_2\to R_1$ which is conjugate to $\sigma$;
	\item\label{prop:1.6,2} If $\tau\colon R_2\to R_1$ is a natural transformation, then there exists a unique natural transformation $\sigma\colon L_1\to L_2$ which is conjugate to $\tau$; 
\end{enumerate}
\end{Proposition}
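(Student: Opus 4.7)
The plan is to prove both (i) and (ii) simultaneously by establishing a bijection between natural transformations $L_1 \to L_2$ and natural transformations $R_2 \to R_1$, realized precisely by the conjugacy condition. The cleanest route is via Yoneda, with explicit formulas provided so that one can verify conjugacy and uniqueness directly.

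For the bijection, I would proceed as follows. Given $\sigma\colon L_1\to L_2$, define
\[
\tau_E := \phi_1\bigl(\beta_2^E \circ \sigma_{R_2(E)}\bigr)\colon R_2(E)\to R_1(E),
\]
and conversely, given $\tau\colon R_2\to R_1$, define
\[
\sigma_D := \phi_2^{-1}\bigl(\tau_{L_1(D)}\circ \alpha_1^D\bigr)\colon L_1(D)\to L_2(D).
\]
The naturality of each assignment follows from naturality of $\alpha_1$, $\beta_2$, $\phi_1$ and $\phi_2$, together with the naturality of $\sigma$ (resp.\ $\tau$). The fact that these two constructions are mutually inverse is a short diagram chase using the triangular identities.

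For the conjugacy verification, fix $\sigma$ and the associated $\tau$. Given $f\colon L_2(D)\to E$, I would compute
\[
\phi_1(f\circ \sigma_D) = R_1(f)\circ R_1(\sigma_D)\circ \alpha_1^D
\]
using the formula $\phi_1(-) = R_1(-)\circ \alpha_1^D$, and compare with
\[
\tau_E \circ \phi_2(f) = \tau_E\circ R_2(f)\circ \alpha_2^D.
\]
Expanding $\tau_E = R_1(\beta_2^E)\circ R_1(\sigma_{R_2(E)})\circ \alpha_1^{R_2(E)}$ and using naturality of $\alpha_1$ applied to the morphism $R_2(f)\circ\alpha_2^D\colon D\to R_2(E)$ and of $\sigma$ applied to $R_2(f)\circ \alpha_2^D$, the two sides reduce to each other via the triangular identity $\beta_2^E\circ L_2(\alpha_2^D)\cdots$ at the relevant point. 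The symmetric argument handles the other direction.

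Finally, uniqueness in (i) and (ii) is automatic from the bijection: the conjugacy condition evaluated at $D$, $E=L_2(D)$, and $f=1_{L_2(D)}$ (respectively at $E$, $D=R_1(E)$, and $g=1_{R_1(E)}$) forces the formulas above, so any conjugate $\tau$ (resp.\ $\sigma$) must coincide with the constructed one. The main obstacle is simply the bookkeeping in the commutative square \eqref{equation:0.1}: everything boils down to naturality plus the triangular identities, and no deep input is needed—the difficulty is just keeping the many indices and subscripts straight.
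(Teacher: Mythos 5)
Your overall strategy — give explicit formulas for the correspondence $\sigma\leftrightarrow\tau$, verify conjugacy by unwinding $\phi_i$ via unit/counit and naturality, and deduce uniqueness by substituting distinguished morphisms into the conjugacy square — is exactly the content of Mac Lane's Theorem IV.7.2, which is what the paper's one-line proof cites. Your first formula $\tau_E = \phi_1(\beta_2^E\circ\sigma_{R_2(E)})$ is correct.

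However, your inverse formula is wrong and does not typecheck. You wrote $\sigma_D := \phi_2^{-1}(\tau_{L_1(D)}\circ\alpha_1^D)$, but $\alpha_1^D\colon D\to R_1L_1(D)$ while $\tau_{L_1(D)}\colon R_2L_1(D)\to R_1L_1(D)$, so the composite is undefined; and even disregarding that, $\phi_2^{-1}$ produces morphisms out of $L_2$, not out of $L_1$. The correct formula, obtained by setting $E=L_2(D)$ and $f=1_{L_2(D)}$ in the conjugacy square \eqref{equation:0.1}, is $\sigma_D = \phi_1^{-1}(\tau_{L_2(D)}\circ\alpha_2^D)$ — the indices $1$ and $2$ are swapped throughout relative to what you wrote. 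Similarly, your uniqueness argument for $\tau$ ("at $E$, $D=R_1(E)$, $g=1_{R_1(E)}$") has the wrong substitution: the relevant choice is $D=R_2(E)$ and $f=\beta_2^E$, which gives $\phi_2(f)=1_{R_2(E)}$ and hence forces $\tau_E = \phi_1(\beta_2^E\circ\sigma_{R_2(E)})$. Both slips are easily corrected, and the verification you sketch (naturality of $\sigma$, $\tau$, $\alpha_i$, $\beta_i$ plus the triangular identities) then goes through, but as written the proof has a gap.
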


\begin{proof}
This follows from \cite[Theorem IV.7.2]{MLan98}
\end{proof}

\begin{Proposition}\label{Conjugate units and counits}
Assume we have adjunctions 
\[(L,G,\phi_1,\alpha_1,\beta_1)\colon \cD\to \cE \quad \text{and} \quad (G,R,\phi_2,\alpha_2,\beta_2)\colon \cE\to \cD.
\] Then $\alpha_1$ and $\beta_2$ are conjugate, and $\beta_1$ and $\alpha_2$ are conjugate.
\end{Proposition}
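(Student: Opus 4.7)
The plan is to interpret the statement, then verify the defining square in each case by a direct computation using naturality and the triangular identities.

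First I would observe that to speak of conjugacy at all we need a pair of adjunctions with the same source and target. The tower $L\dashv G\dashv R$ yields by composition the adjunction $GL\dashv GR$ on $\cD$ and the adjunction $LG\dashv RG$ on $\cE$. So the assertion ``$\alpha_1$ and $\beta_2$ are conjugate'' is naturally to be read with respect to the pair of adjunctions $1_{\cD}\dashv 1_{\cD}$ and $GL\dashv GR$ (both $\cD\to\cD$), where $\alpha_1\colon 1_{\cD}\to GL$ plays the role of $\sigma\colon L_1\to L_2$ and $\beta_2\colon GR\to 1_{\cD}$ plays the role of $\tau\colon R_2\to R_1$. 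Analogously, ``$\beta_1$ and $\alpha_2$ are conjugate'' is with respect to $LG\dashv RG$ and $1_{\cE}\dashv 1_{\cE}$, with $\beta_1\colon LG\to 1_{\cE}$ as $\sigma$ and $\alpha_2\colon 1_{\cE}\to RG$ as $\tau$.

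Next I would verify the first conjugacy by chasing the square of \eqref{equation:0.1}. Unwinding the composite adjunction $GL\dashv GR$, its hom-isomorphism sends $f\colon GL(D)\to D'$ to $\phi_c(f)=GR(f)\circ G\alpha_{2,L(D)}\circ \alpha_{1,D}$. Since $\phi_{1_\cD\dashv 1_\cD}$ is the identity, the conjugacy condition reads
\[
\beta_{2,D'}\circ GR(f)\circ G\alpha_{2,L(D)}\circ \alpha_{1,D}\;=\;f\circ \alpha_{1,D}.
\]
Naturality of $\beta_2\colon GR\to 1_{\cD}$ moves $\beta_{2,D'}$ past $GR(f)$, producing $f\circ \beta_{2,GL(D)}\circ G\alpha_{2,L(D)}\circ \alpha_{1,D}$, and then the triangular identity $\beta_{2,G(E)}\circ G\alpha_{2,E}=1_{G(E)}$ of the adjunction $G\dashv R$, applied with $E=L(D)$, collapses the middle composite to the identity of $GL(D)$, yielding $f\circ \alpha_{1,D}$ as required.

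The second conjugacy is strictly dual. For $f\colon D\to E$ in $\cE$ one needs $\alpha_{2,E}\circ f = \phi_c(f\circ \beta_{1,D})$, where $\phi_c$ is now the composite hom-isomorphism for $LG\dashv RG$. Applying $\phi_{L\dashv G}$ first gives $G f\circ G\beta_{1,D}\circ \alpha_{1,G(D)}$; the triangular identity $G\beta_{1,D}\circ \alpha_{1,G(D)}=1_{G(D)}$ of $L\dashv G$ simplifies this to $G f$. Then applying $\phi_{G\dashv R}$ yields $RG(f)\circ \alpha_{2,D}$, which equals $\alpha_{2,E}\circ f$ by naturality of $\alpha_2$.

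The main ``obstacle'' is purely bookkeeping: one has to keep straight which composite adjunction is being used on each side, so that the correct unit/counit is invoked and the correct triangular identity applies. Conceptually, one could invoke Proposition~\ref{prop:1.6} to argue by the uniqueness of conjugates once a single compatibility has been checked, but the direct two-line verification above is shorter and avoids constructing the conjugate separately.
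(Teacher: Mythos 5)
Your proof is correct, and it takes a different route from the paper: the paper simply cites \cite[Theorem~2.15]{Lau06} and its dual, whereas you give a self-contained direct verification. Your identification of the relevant pair of adjunctions ($1_{\cD}\dashv 1_{\cD}$ versus the composite $GL\dashv GR$ for the first claim, and $LG\dashv RG$ versus $1_{\cE}\dashv 1_{\cE}$ for the second) is exactly the right reading of the statement, and it is the reading the paper itself relies on later when it combines this proposition with Proposition~\ref{prop:1.65} in the proof of Lemma~\ref{lemma:N1}. The computation checks out: the formula $\phi_c(f)=GR(f)\circ G\alpha_{2,L(D)}\circ\alpha_{1,D}$ for the composite adjunction is right, naturality of $\beta_2$ slides it past $GR(f)$, and the triangular identity $\beta_{2,G(E)}\circ G(\alpha_{2,E})=1_{G(E)}$ of $G\dashv R$ finishes it; the second claim is the mirror image using the other triangular identity of $L\dashv G$ and naturality of $\alpha_2$. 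What your approach buys over the paper's is transparency and independence from an external reference; what the citation buys is brevity and a pointer to the general framework (ambidextrous adjunctions and Frobenius monads) in which this compatibility lives. Either is fine, and your version would serve a reader who wants the argument spelled out.
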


\begin{proof}
This follows from \cite[Theorem 2.15]{Lau06} and its dual.
\end{proof}

We need the following result later.

\begin{Proposition}\label{prop:1.65}
Let $\cA$ and $\cB$ be abelian categories. Assume 
\[
(L_1,R_1,\phi_1,\alpha_1,\beta_1)\colon \cB\to \cA \quad \text{and} \quad (L_2,R_2,\phi_2,\alpha_2,\beta_2)\colon \cB\to \cA 
\]
are adjunctions, and $\sigma\colon L_1\to L_2$ and $\tau\colon R_2\to R_1$ are conjugate natural transformations. Then the functor $\Ker \tau\colon \cA\to \cB$ is right adjoint to the functor $\Coker \sigma\colon \cB\to \cA$.
\end{Proposition}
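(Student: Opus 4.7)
The plan is to produce the adjunction by restricting the adjunction $\phi_2 \colon \cA(L_2(B),A) \xrightarrow{\cong} \cB(B,R_2(A))$ to the appropriate subsets cut out by $\sigma$ and $\tau$, and then using the universal properties of cokernel and kernel.

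First I would unpack the two Hom-sets. By the universal property of the cokernel, a morphism $\Coker\sigma(B) \to A$ in $\cA$ is the same as a morphism $f' \colon L_2(B) \to A$ satisfying $f' \circ \sigma_B = 0$. Dually, a morphism $B \to \Ker\tau(A)$ in $\cB$ is the same as a morphism $g \colon B \to R_2(A)$ satisfying $\tau_A \circ g = 0$. So I want to show that $\phi_2$ restricts to a bijection between these two collections of morphisms.

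This is exactly what the conjugacy square \eqref{equation:0.1} gives. Applied to $D=B$, $E=A$, it says that $\phi_1(f' \circ \sigma_B) = \tau_A \circ \phi_2(f')$ for every $f' \colon L_2(B) \to A$. Since $\phi_1$ is an isomorphism, $f' \circ \sigma_B = 0$ if and only if $\tau_A \circ \phi_2(f') = 0$. Hence $\phi_2$ sends the kernel of $(-\circ \sigma_B)$ bijectively onto the kernel of $(\tau_A \circ -)$, and combining with the two universal-property identifications above gives a bijection
\[
\cA(\Coker\sigma(B),A) \xrightarrow{\;\cong\;} \cB(B,\Ker\tau(A)).
\]

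Finally I would check naturality in $B$ and in $A$. Naturality of $\phi_2$ in both arguments, together with naturality of $\sigma$ in $B$ and of $\tau$ in $A$ (so that the defining quotient and subobject maps of $\Coker\sigma$ and $\Ker\tau$ are themselves natural), makes both squares commute. The only mildly delicate point is verifying that the induced factorisations through $\Coker\sigma(B)$ and $\Ker\tau(A)$ depend naturally on $B$ and $A$; this follows from the uniqueness clauses in the universal properties, so no real computation is needed. The main obstacle is essentially bookkeeping—keeping track of which adjunction ($\phi_1$ or $\phi_2$) is used and reading the conjugacy diagram in the right direction—rather than any substantive difficulty.
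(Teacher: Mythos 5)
Your argument is essentially the same as the paper's: the paper applies $\cA(-,A)$ and $\cB(B,-)$ to the defining (co)kernel presentations to get left-exact rows of abelian groups and then uses the commuting conjugacy square to induce the isomorphism, which is exactly the universal-property reformulation you give. Both correctly reduce to the identity $\phi_1(f' \circ \sigma_B) = \tau_A \circ \phi_2(f')$, and both handle naturality the same way.
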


\begin{proof}
Let $A\in \cA$ and $B\in \cB$ be arbitrary. We have exact sequences 
\begin{align*}
L_1(B)\xrightarrow{\sigma_B}L_2(B)\to \Coker \sigma_{B}\to 0 \\
0\to \Ker \tau_{A}\to R_2(A)\xrightarrow{\tau_A}R_1(A).
\end{align*}
Applying $\cA(-,A)$ to the first sequence and $\cB(B,-)$ to the second sequence gives a diagram
\[
\begin{tikzpicture}[description/.style={fill=white,inner sep=2pt}]
\matrix(m) [matrix of math nodes,row sep=2.5em,column sep=3.0em,text height=1.5ex, text depth=0.25ex] 
{0 & \cA(\Coker \sigma_{B},A) & \cA(L_2(B),A) & \cA(L_1(B),A) \\
 0 & \cB(B,\Ker \tau_{A}) & \cB(B,R_2(A)) & \cB(B,R_1(A))  \\};
\path[->]
(m-1-1) edge node[auto] {$$} 	    												(m-1-2)	             
(m-1-2) edge node[auto] {$$} 	    												(m-1-3)	 
(m-1-3) edge node[auto] {$-\circ \sigma_{B}$} 	    								(m-1-4)
(m-2-1) edge node[auto] {$$} 	    												(m-2-2)	    
(m-2-2) edge node[auto] {$$} 	    												(m-2-3)		
(m-2-3) edge node[auto] {$\tau_{A}\circ -$} 	    						        (m-2-4)

(m-1-2) edge[dashed] node[auto] {$\cong$} 	    								    (m-2-2)	
(m-1-3) edge node[auto] {$\phi_2$} 	    									  		(m-2-3)		
(m-1-4) edge node[auto] {$\phi_1$} 	    								   			(m-2-4);	
\end{tikzpicture}
\]
with exact rows. Since $\tau$ and $\sigma$ are conjugate, the right square commutes. Therefore, we get an induced natural isomorphism 
\[
\cA(\Coker \sigma_{B},A)\cong \cB(B,\Ker \tau_{A})
\]
and the result follows.
\end{proof}

\subsection{Monads and comonads}\label{Monads and comonads}
 We refer to chapter 4 in \cite{Bor94a} and chapter VI in \cite{MLan98} for the general theory of monads and comonads. Let $\cD$ and $\cE$ be preadditive categories.

\begin{Definition}\label{def:1} 
\hspace{2em}
\begin{enumerate} 
\item A \emphbf{monad} on $\cD$ is a tuple $\sfT=(T,\eta,\mu)$, where $T\colon\cD\to \cD$ is a functor and $\eta \colon 1_{\cD}\to T$ and $\mu\colon T\circ T\to T$ are natural transformations such that the diagrams 
\begin{equation*}
\begin{tikzpicture}[description/.style={fill=white,inner sep=2pt}]
\matrix(m) [matrix of math nodes,row sep=2.5em,column sep=5.0em,text height=1.5ex, text depth=0.25ex] 
{ TTT & TT  \\
	TT & T  \\};
\path[->]
(m-1-1) edge node[auto] {$T(\mu)$} 	    												  			(m-1-2)
(m-2-1) edge node[auto] {$\mu$} 	    												  (m-2-2)

(m-1-1) edge node[auto] {$\mu_T$} 	    																	(m-2-1)
(m-1-2) edge node[auto] {$\mu$} 	    													(m-2-2);	
\end{tikzpicture}
\begin{tikzpicture}[description/.style={fill=white,inner sep=2pt}]
\matrix(m) [matrix of math nodes,row sep=2.5em,column sep=5.0em,text height=1.5ex, text depth=0.25ex] 
{ & T &   \\
  T & TT  & T \\};
\path[->]
(m-2-1) edge node[below] {$T(\eta)$} 	    												  (m-2-2)
(m-2-3) edge node[below] {$\eta_T$} 	    												(m-2-2)

(m-2-2) edge node[auto] {$\mu$} 	    													(m-1-2)

(m-2-1) edge node[above] {$1_T$} 	    													(m-1-2)
(m-2-3) edge node[above] {$1_T$} 	    													(m-1-2);	
\end{tikzpicture}
\end{equation*}
commute.

\item A \emphbf{comonad} on $\cD$ is a tuple $\sfS=(S,\epsilon,\Delta)$, where $S\colon\cD\to \cD$ is a functor and $\epsilon \colon S\to 1_{\cD}$ and $\Delta\colon S \to S \circ S$ are natural transformations such that the diagrams  
\begin{equation*}
\begin{tikzpicture}[description/.style={fill=white,inner sep=2pt}]
\matrix(m) [matrix of math nodes,row sep=2.5em,column sep=5.0em,text height=1.5ex, text depth=0.25ex] 
{ S & SS  \\
	SS & SSS  \\};
\path[->]
(m-1-1) edge node[auto] {$\Delta$} 	    												  			(m-1-2)
(m-2-1) edge node[auto] {$S(\Delta)$} 	    												  (m-2-2)

(m-1-1) edge node[auto] {$\Delta$} 	    																	(m-2-1)
(m-1-2) edge node[auto] {$\Delta_{S}$} 	    													(m-2-2);	
\end{tikzpicture}
\begin{tikzpicture}[description/.style={fill=white,inner sep=2pt}]
\matrix(m) [matrix of math nodes,row sep=2.5em,column sep=5.0em,text height=1.5ex, text depth=0.25ex] 
{ & S &   \\
  S & SS  & S \\};
\path[->]
(m-2-2) edge node[below] {$S(\epsilon)$} 	    												  (m-2-1)
(m-2-2) edge node[below] {$\epsilon_{S}$} 	    												(m-2-3)

(m-1-2) edge node[auto] {$\Delta$} 	    													(m-2-2)

(m-1-2) edge node[above] {$1_S$} 	    													(m-2-1)
(m-1-2) edge node[above] {$1_S$} 	    													(m-2-3);	
\end{tikzpicture}
\end{equation*}
commute.
\end{enumerate}
\end{Definition}
Note that a comonad on $\cD$ is the same as a monad on $\cD\op$. 

A \emphbf{morphism of monads} $\delta\colon (T_1,\eta_1,\mu_1)\to (T_2,\delta_2,\mu_2)$ is given by a natural transformation $\delta\colon T_1\to T_2$ satisfying $\mu_2\circ \delta^2 = \delta \circ \mu_1$ and $\eta_2 = \delta\circ \eta_1$, where $\delta^2 = T_2(\delta)\circ \delta_{T_1}= \delta_{T_2}\circ T_1(\delta)$. Dually, a \emphbf{morphism of comonads} $\zeta\colon (S_1,\epsilon_1,\Delta_1)\to (S_2,\epsilon_2,\Delta_2)$  is given by a natural transformation $\zeta\colon S_1\to S_2$ satisfying $\Delta_2\circ \zeta = \zeta^2\circ \Delta_1$ and $\epsilon_1 = \epsilon_2\circ \zeta$, where $\zeta^2 = S_2(\zeta)\circ \zeta_{S_1}= \zeta_{S_2}\circ S_1(\zeta)$. For a monad $\sfT=(T,\eta,\mu)$ on $\cD$, an object $D$ is called $\sfT$\emphbf{-injective} if $\eta_D\colon D\to T(D)$ is a split monomorphism. These are precisely the summands of objects of the form $T(D)$. Dually, for a comonad $\sfS=(S,\epsilon,\Delta)$ on $\cD$, an object $D$ is called $\sfS$-\emphbf{projective} if $\epsilon_D\colon S(D)\to D$ is a split epimorphism. These are precisely the summands of objects of the form $S(D)$.

\begin{Remark}\label{remark:1.5}
The category of endofunctors on $\cD$ is a monoidal category. The product is given by composition, and the unit object is the identity functor. A monad and a comonad is just a monoid and a comonoid in this monoidal category.
\end{Remark}

\begin{Example}\label{example:1}
If $(L,R,\phi,\alpha,\beta)\colon \cD\to \cE$ is an adjunction, then the tuple $\sfT=(R\circ L,\alpha, R(\beta_{L}))$ is a monad on $\cD$, and the tuple $\sfS=(L\circ R,\beta, L(\alpha_{R}))$ is a comonad on $\cE$. 
\end{Example}

We also have a notion of adjunctions between monads and comonads.

\begin{Definition}\label{adjoint monads and comonads}
Let $\sfT=(T,\eta,\mu)$ and $\sfS=(S,\epsilon,\Delta)$ be a monad and comonad on $\cD$. 
\begin{enumerate}
\item We say that $\sfS$ is \emphbf{left adjoint} to $\sfT$ (written $\sfS\dashv \sfT$) if $S$ is left adjoint to $T$ such that $\Delta\colon S\to SS$ and $\mu\colon TT\to T$ are conjugate, and $\epsilon\colon S\to 1_{\cD}$ and $\eta\colon 1_{\cD}\to T$ are conjugate.
\item We say that $\sfS$ is \emphbf{right adjoint} to $\sfT$ (written $\sfT\dashv \sfS$) if $S$ is right adjoint to $T$ such that $\Delta\colon S\to SS$ and $\mu\colon TT\to T$ are conjugate, and $\epsilon\colon S\to 1_{\cD}$ and $\eta\colon 1_{\cD}\to T$ are conjugate.
\end{enumerate} 
\end{Definition}
If $\sfT=(T,\eta,\mu)$  is a monad and $S$ is a left or right adjoint of $T$, then by Proposition \ref{prop:1.6} there exist a unique comonad $\sfS=(S,\epsilon,\Delta)$ which is left or right adjoint to $\sfT$, respectively.

We recall the Eilenberg-Moore category of a monad.

\begin{Definition}\label{Eilenberg-Moore category}
Let $\sfT=(T,\eta,\mu)$ and $\sfS=(S,\epsilon,\Delta)$ be a monad and comonad on $\cD$.
\begin{enumerate}
\item A $\sfT$\emphbf{-module} is a morphism $s\colon T(D)\to D$ satisfying $s\circ \eta_D=1_D$ and $s\circ \mu_D = s\circ T(s)$;
\item A $\sfS$\emphbf{-comodule} is a morphism $t\colon D\to S(D)$ satisfying $\epsilon_D\circ t = 1_D$ and $\Delta_D\circ t = S(t)\circ t$;
\item The \emphbf{Eilenberg-Moore category} $\cD^{\sfT}$ of $\sfT$ is the category where the objects are all $\sfT$-modules, and where a morphism between $\sfT$-modules $T(D_1)\xrightarrow{s_1} D_1$ to $T(D_2)\xrightarrow{s_2} D_2$ is a morphism $f\colon D_1\to D_2$ in $\cD$ satisfying $f\circ s_1=s_2\circ T(f)$.
\item The \emphbf{Eilenberg-Moore category} $\cD^{\sfS}$ of $\sfS$ is the category where the objects are all $\sfS$-comodules, and where a morphism between $\sfS$-comodules $D_1\xrightarrow{t_1} S(D_1)$ to $D_2\xrightarrow{t_2} S(D_2)$ is a morphism $f\colon D_1\to D_2$ in $\cD$ satisfying $t_2\circ f=S(f)\circ t_1$.
\item The \emphbf{Kleisli category} $\cD_{\sfT}$ of $\sfT$ is the category with objects being the objects in $\cD$, and with morphisms being
\[
\cD_{\sfT}(D_1,D_2):= \cD(D_1,T(D_2))
\]
Composition of $D_1\xrightarrow{g} T(D_2)$ and $D_2\xrightarrow{h} T(D_3)$ in $\cD_{\sfT}$ is given by the composite $D_1\xrightarrow{g} T(D_2) \xrightarrow{T(h)}TT(D_3)\xrightarrow{\mu_{D_3}}T(D_3)$ in $\cD$. The unit at  $D\in\cD_{\sfT}$ is $\eta_D\colon D\to T(D)$. 
\item The \emphbf{co-Kleisli category} $\cD_{\sfS}$ of $\sfS$ is the category with objects being the objects in $\cD$, and with morphisms being
\[
\cD_{\sfS}(D_1,D_2):= \cD(S(D_1),D_2)
\]
Composition of $S(D_1)\xrightarrow{g} D_2$ and $S(D_2)\xrightarrow{h} D_3$ in $\cD_{\sfS}$ is given by the composite $S(D_1)\xrightarrow{\Delta_{D_1}}SS(D_1)\xrightarrow{S(g)} S(D_2) \xrightarrow{h}D_3$ in $\cD$. The unit at  $D\in\cD_{\sfS}$ is  $\epsilon_D\colon S(D)\to D$. 
\end{enumerate}
\end{Definition} 

The forgetful functor $U^{\sfT}\colon \cD^{\sfT}\to \cD$ admits a left adjoint $F^{\sfT}\colon \cD\to \cD^{\sfT}$ sending $D$ to $TT(D)\xrightarrow{\mu_D}T(D)$ and $g\colon D_1\to D_2$ to $T(g)\colon T(D_1)\to T(D_2)$, see \cite[Proposition 4.1.4]{Bor94a}. Similarly, we have functors
\begin{align*}
& U_{\sfT}\colon \cD_{\sfT}\to \cD \quad (D\mapsto T(D)) \quad ((g\colon D_1\xrightarrow{}T(D_2))\mapsto \mu_{D_2}\circ T(g)) \\
& F_{\sfT}\colon \cD\to \cD_{\sfT} \quad (D\mapsto D) \quad ((f\colon D_1\to D_2)\mapsto \eta_{D_2}\circ f))
\end{align*}
where $F_{\sfT}$ is left adjoint to $U_{\sfT}$, see \cite[Proposition 4.1.7]{Bor94a}. For both of the adjunctions $F^{\sfT}\dashv U^{\sfT}$ and $F_{\sfT}\dashv U_{\sfT}$ the induced monad on $\cD$ is $\sfT$, see \cite[Proposition 4.2.2]{Bor94a}. The dual statements hold for the comonad $\sfS$. 

We need the following technical result to prove Theorem \ref{Isomorphism of monads adjoint pairs}, which is used to prove Theorem \ref{Admissible adjunction theorem introduction} stated in the introduction. Here, for a functor $F\colon \cD\to \cE$, we let $\im F$ denote the image of $F$. This is a category with same objects as $\cD$, and where a morphism between $D_1$ and $D_2$ is a morphism $F(D_1)\to F(D_2)$ in $\cE$.

\begin{Theorem}\label{correspondence morphism of monads and morphisms of the images}
Let 
\[
(L_1,R_1,\phi_1,\alpha_1,\beta_1)\colon \cD\to \cE \quad \text{and} \quad (L_2,R_2,\phi_2,\alpha_2,\beta_2)\colon \cD\to \cE
\]
be adjunctions, and let 
\[
\sfT_1:=(R_1\circ L_1,\alpha_1, R_1((\beta_1)_{L_1}))\quad \text{and} \quad \sfT_2:=(R_2\circ L_2,\alpha_2, R_2((\beta_2)_{L_2}))
\]
be the induced monads on $\cD$.
\begin{enumerate}
\item\label{correspondence morphism of monads and morphisms of the images:1} Given a morphism $\gamma\colon \sfT_1\to \sfT_2$ of monads, we get a functor 
\[
E_{\gamma}\colon \im L_1\to \im L_2
\]
satisfying $E_{\gamma}\circ L_1=L_2$. It acts as identity on objects, and sends a morphism $f\colon L_1(A_1)\to L_1(A_2)$ to 
\[
(\beta_2)_{L_2(A_2)}\circ L_2(\gamma_{A_2})\circ L_2R_1(f)\circ L_2((\alpha_1)_{A_1})\colon L_2(A_1)\to L_2(A_2)
\] 
\item\label{correspondence morphism of monads and morphisms of the images:2} Given a functor $E\colon \im L_1\to \im L_2$ satisfying $E\circ L_1=L_2$, there exists a unique morphism $\gamma \colon \sfT_1\to \sfT_2$ of monads such that $E_{\gamma}=E$. 
\end{enumerate}
\end{Theorem}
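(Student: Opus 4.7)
The strategy is to recognize $\im L_i$ as isomorphic to the Kleisli category $\cD_{\sfT_i}$ of the monad $\sfT_i = (T_i,\alpha_i,\mu_i)$ with $T_i = R_i\circ L_i$, and then appeal to the standard correspondence between monad morphisms and Kleisli functors that are identity on objects and commute with the Kleisli inclusions. Concretely, the adjunction isomorphism $\phi_i$ restricts to a bijection
\[
\im L_i(A_1,A_2) = \cE(L_i(A_1),L_i(A_2)) \;\cong\; \cD(A_1, T_i(A_2)),
\]
and a short computation using naturality of $\beta_i$ and the triangular identities shows that composition in $\im L_i$ transports to the Kleisli composition $(g,f)\mapsto (\mu_i)_C\circ T_i(g)\circ f$, identities transport to $(\alpha_i)_A$, and the canonical functor $L_i\colon \cD\to \im L_i$ becomes the Kleisli inclusion $g\mapsto (\alpha_i)_B\circ g$.

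For part (i), given $\gamma\colon \sfT_1\to\sfT_2$, the induced Kleisli functor sends $h\colon A_1\to T_1(A_2)$ to $\gamma_{A_2}\circ h\colon A_1\to T_2(A_2)$. Preservation of identities is the unit axiom $\gamma\circ\alpha_1 = \alpha_2$; preservation of composition uses naturality of $\gamma$ together with the multiplication axiom $\gamma\circ\mu_1 = \mu_2\circ\gamma^2$; the compatibility $E_\gamma\circ L_1 = L_2$ is again the unit axiom. Translating back via $\phi_2^{-1}(k) = (\beta_2)_{L_2(A_2)}\circ L_2(k)$, the morphism $\gamma_{A_2}\circ\phi_1(f) = \gamma_{A_2}\circ R_1(f)\circ(\alpha_1)_{A_1}$ is sent precisely to $(\beta_2)_{L_2(A_2)}\circ L_2(\gamma_{A_2})\circ L_2 R_1(f)\circ L_2((\alpha_1)_{A_1})$, which is the formula in the statement.

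For part (ii), given $E$, I would define $\gamma_A := \phi_2(E((\beta_1)_{L_1(A)}))$; under the Kleisli identification this is $E$ applied to the canonical Kleisli morphism $T_1(A)\to A$ corresponding to $1_{T_1(A)}$. Naturality of $\gamma$ in $A$ follows by applying $E$ to the naturality squares of $\beta_1$ and using $E\circ L_1 = L_2$ on the $L_1$-factor. The unit axiom $\gamma\circ\alpha_1 = \alpha_2$ is immediate from the triangular identity $(\beta_1)_{L_1(A)}\circ L_1((\alpha_1)_A)=1_{L_1(A)}$, functoriality of $E$, and $E\circ L_1 = L_2$. The main obstacle is the multiplication axiom $\gamma\circ\mu_1 = \mu_2\circ\gamma^2$: I would extract it by applying $E$ to the composition $(\beta_1)_{L_1(A)}\circ L_1R_1((\beta_1)_{L_1(A)}) = (\beta_1)_{L_1(A)}\circ(\beta_1)_{L_1T_1(A)}$ in $\im L_1$, whose $\phi_1$-transpose encodes $(\mu_1)_A$, and whose $\phi_2$-transpose of the $E$-image can, using functoriality of $E$ and $E\circ L_1 = L_2$ applied to the intermediate $L_1 R_1$-factor, be rewritten as $(\mu_2)_A\circ T_2(\gamma_A)\circ\gamma_{T_1(A)}$; comparing the two sides yields the axiom.

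Finally, $E_\gamma = E$ and uniqueness of $\gamma$ both follow from a single calculation: the triangular identity $R_1((\beta_1)_{L_1(A)})\circ(\alpha_1)_{T_1(A)}=1_{T_1(A)}$ simplifies the formula of part (i) applied to $f=(\beta_1)_{L_1(A)}$ to $(\beta_2)_{L_2(A)}\circ L_2(\gamma_A) = \phi_2^{-1}(\gamma_A) = E((\beta_1)_{L_1(A)})$. Since every $f\colon L_1(A_1)\to L_1(A_2)$ factors as $f = (\beta_1)_{L_1(A_2)}\circ L_1(\phi_1(f))$ (a direct consequence of the triangular identities), functoriality of $E$ and $E_\gamma$ together with $E\circ L_1 = L_2 = E_\gamma\circ L_1$ forces $E_\gamma(f)=E(f)$ in general; and any $\gamma'$ with $E_{\gamma'}=E$ satisfies $\gamma'_A = \phi_2(E_{\gamma'}((\beta_1)_{L_1(A)})) = \phi_2(E((\beta_1)_{L_1(A)})) = \gamma_A$.
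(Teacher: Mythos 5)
Your proof is correct and follows the same structural framework as the paper's (both identify $\im L_i$ with the Kleisli category of $\sfT_i$ via the adjunction isomorphism $\phi_i$), but for part (ii) you take a genuinely different and more self-contained route. The paper proves part (ii) by invoking \cite[Theorem 6]{Str72} from Street's formal theory of monads, applied to the 2-category $\operatorname{Cat}\op$, which gives the correspondence between monad morphisms and Kleisli-compatible functors as a black box. You instead construct $\gamma_A := \phi_2(E((\beta_1)_{L_1(A)}))$ explicitly and verify naturality, the unit axiom, and the multiplication axiom by direct computation from the triangular identities and naturality of $\beta_1$; the multiplication axiom is extracted by applying $E$ to the identity $(\beta_1)_{L_1(A)}\circ L_1 R_1((\beta_1)_{L_1(A)}) = (\beta_1)_{L_1(A)}\circ(\beta_1)_{L_1 T_1(A)}$ and reading off both sides under $\phi_2$. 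Your $\gamma_A$ agrees with the formula the paper records in \eqref{Formula for monad isomorphism}, and your closing argument for $E_\gamma = E$ and uniqueness (via the factorization $f = (\beta_1)_{L_1(A_2)}\circ L_1(\phi_1(f))$) is clean. The trade-off is that the paper's citation is shorter but requires the reader to unwind a 2-categorical statement, while yours is elementary and keeps all computations at the level of triangular identities; for a reader not fluent in the formal theory of monads, your version is arguably more transparent.
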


\begin{proof}
Let $\cD_{\sfT_1}$ and $\cD_{\sfT_2}$ denote the Kleisli categories of $\sfT_1$ and $\sfT_2$, and let $F_{\sfT_1}\colon \cD\to \cD_{\sfT_1}$ and $F_{\sfT_2}\colon \cD\to \cD_{\sfT_2}$ be the left adjoints in the Kleisli adjunctions. By \cite[Proposition 4.2.1]{Bor94a} there exists isomorphisms
\[
\Phi_1\colon \im L_1\to \cD_{\sfT_1} \quad \text{and} \quad \Phi_2\colon \im L_2\to \cD_{\sfT_2}
\]
satisfying $\Phi_1\circ L_1= F_{\sfT_1}$ and $\Phi_2\circ L_2=F_{\sfT_1}$. Explicitly, they act as identity on objects, and they send morphisms $f_1\colon L_1(A_1)\to L_1(A_2)$ and $f_2\colon L_2(A_1)\to L_2(A_2)$ to $\phi_1(f_1)\colon A_1\to R_1L_1(A_2)$ and $\phi_2(f_2)\colon A_1\to R_2L_2(A_2)$, respectively.  Now any morphism $\gamma\colon \sfT_1\to \sfT_2$ of monads induces a functor \\ $E'_{\gamma}\colon \cD_{\sfT_1} \to \cD_{\sfT_2}$ satisfying $E'_{\gamma}\circ F_{\sfT_1}= F_{\sfT_2}$. It acts as identity on objects and sends a morphism $A_1\xrightarrow{f} R_1L_1(A_1)$ to $A_1\xrightarrow{f} R_1L_1(A_1)\xrightarrow{\gamma_{A_1}}R_2L_2(A_1)$. The composite $E_{\gamma}=(\Phi_2)^{-1}\circ E'_{\gamma}\circ \Phi_1$ gives the functor in part $\ref{correspondence morphism of monads and morphisms of the images:1}$. 

For Part $\ref{correspondence morphism of monads and morphisms of the images:2}$, we note that by \cite[Proposition 2.4]{CVST10} (with $J$ being the identity functor in their notation) the association $\gamma \to E'_{\gamma}$ gives a one-to-one correspondence between morphisms of monads $\gamma\colon \sfT_1\to \sfT_2$ and functors $E'\colon \cD_{\sfT_1} \to \cD_{\sfT_2}$ satisfying $E'\circ F_{\sfT_1}= F_{\sfT_2}$. In particular, given a functor $E\colon \im L_1\to \im L_2$ satisfying $E\circ L_1=L_2$, the functor $E':=\Phi_2\circ E\circ \Phi_1^{-1}$ satisfies $E'\circ F_{\sfT_1}= F_{\sfT_2}$ and therefore must be equal to $E'_{\gamma}$ for some $\gamma\colon \sfT_1\to \sfT_2$.  Composing $E'$ with the isomorphisms $\Phi_1$ and $\Phi_2$ then gives the result.
\end{proof}
For a functor $E\colon \im L_1\to \im L_2$ satisfying $E\circ L_1=L_2$, one can easily check that the unique morphism of monads $\gamma\colon \sfT_1\to \sfT_2$ with $E_{\gamma}=E$ is given by
\begin{align}\label{Formula for monad isomorphism}
\gamma := R_1L_1\xrightarrow{(\alpha_2)_{R_1L_1}} R_2L_2 R_1 L_1= R_2  E  L_1 R_1 L_1 \xrightarrow{R_2E(\beta_1)_{L_1}} R_2 E L_1=R_2 L_2.
\end{align}

A functor $R\colon \cD\to \cE$ is called \emphbf{monadic} if there exists a monad $\sfT$ on $\cE$, an equivalence $J\colon \cD\to \cE^{\sfT}$, and an isomorphism of functors $R\cong U^{\sfT}\circ J$ where $U^{\sfT}\colon \cE^{\sfT}\to \cE$ is the forgetful functor. In the following we state a weak version of Beck's monadicity theorem, a result which characterizes when a functor is monad. See \cite[Theorem 4.4.4]{Bor94a} for the general version. 

\begin{Theorem}\label{Weak Beck's monadicity}
Let $R\colon \cA\to \cB$ be a functor between abelian categories. Then $R$ is monadic if the following conditions hold:
\begin{enumerate}
\item $R$ has a left adjoint;
\item $R$ is faithful;
\item $R$ is right exact.
\end{enumerate}
\end{Theorem}

\begin{proof}
This follows from \cite[Theorem 4.4.4]{Bor94a}.
\end{proof}

\subsection{Properties of subcategories}\label{Properties of subcategories}

Fix abelian categories $\cA$ and $\cB$.

\begin{Definition}\label{Generating and cogenerating}
Let $\cX\subset \cA$ be a subcategory.
\begin{enumerate}
\item $\cX$ is \emphbf{generating} if for all objects $A\in \cA$ there exists $X\in \cX$ and an epimorphism $X\to A$;
\item $\cX$ is \emphbf{cogenerating} if for all objects $A\in \cA$ there exists $X\in \cX$ and a monomorphism $A\to X$;
\item $\cX$ is \emphbf{resolving} if it is a generating subcategory which is closed under direct summands, extensions, and kernel of epimorphism; 
\item $\cX$ is \emphbf{coresolving} if it is a cogenerating subcategory which is closed under direct summands, extensions, and cokernels of monomorphism. 
\end{enumerate}
\end{Definition}
Here we follow the conventions in \cite{Sto14} for the definition of resolving and coresolving. We can define the \emphbf{resolution dimension}  $\dim_{\cX}(A)$ of any object $A\in \cA$ with respect to a resolving subcategory $\cX$. Explicitly, it is the smallest integer $n\geq 0$ such that there exists a long exact sequence 
\[
0\to X_n\to \cdots X_1\to X_0\to A\to 0 
\] 
where $X_i\in \cX$ for $0\leq i\leq n$. We write $\dim_{\cX}(A)=\infty$ if there doesn't exists such a resolution. If $\dim_{\cX}(A)=n$, the dimension can be computed using any resolution of $A$ by objects in $\cX$, i.e for any exact sequence
\[
0\to X'_n\to \cdots X'_1\to X'_0\to A\to 0 
\]
with $X_i'\in \cX$ for all $0\leq i\leq n-1$ we get that $X'_n\in \cX$, see \cite[Proposition 2.3]{Sto14}. The \emphbf{global resolution dimension} $\dim_{\cX}(\cA)$ of $\cA$ is defined as the supremum of $\dim_{\cX}(A)$ over all $A\in \cA$. We can define the \emphbf{coresolution dimension} $\dim_{\cY}$ with respect to a coresolving category $\cY$ dually. 

\begin{Definition}[III.6.3 in \cite{GM03}]\label{Adaptable subcategories}
Let $F\colon \cA\to \cB$ be an additive functor, and let $\cX$ be a subcategory of $\cA$.
\begin{enumerate}
\item If $F$ is left exact, we say that $\cX$ is \emphbf{adapted} to $F$ if $\cX$ is cogenerating and for any exact sequence $0\to X_0\to X_{-1}\to X_{-2}\to \cdots$ with $X_i\in \cX$ for all $i\leq 0$ we have that $0\to F(X_0)\to F(X_{-1})\to F(X_{-2})\to \cdots$ is exact;
\item If $F$ is right exact, we say that $\cX$ is \emphbf{adapted} to $F$ if $\cX$ is generating and for any exact sequence $\cdots\to X_2\to X_1\to X_0\to 0$ with $X_i\in \cX$ for all $i\geq 0$ we have that $\cdots \to F(X_2)\to F(X_1)\to F(X_0)\to 0$ is exact.
\end{enumerate}
\end{Definition}

\begin{Theorem}[Theorem III.6.8 in \cite{GM03}]
Let $F\colon \cA\to \cB$ be an additive functor, and let $\cX$ be a subcategory of $\cA$. The following holds:
\begin{enumerate}
\item If $F$ is left exact and $\cX$ is adapted to $F$, then the right derived functor $RF\colon D^+(\cA)\to D^+(\cB)$ exists, and $RF(X)\cong X$ for all $X\in \cX$;
\item If $F$ is right exact and $\cX$ is adapted to $F$, then the left derived functor $LF\colon D^-(\cA)\to D^-(\cB)$ exists, and $LF(X)\cong X$ for all $X\in \cX$.
\end{enumerate}
\end{Theorem}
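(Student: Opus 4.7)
The plan is to construct $RF$ (resp.\ $LF$) by replacing each bounded-below (resp.\ bounded-above) complex with a quasi-isomorphic complex whose terms lie in $\cX$ and then applying $F$ termwise. I will sketch case (i); case (ii) is obtained by applying case (i) to $F\op\colon \cA\op\to \cB\op$, observing that $\cX\op\subset \cA\op$ is adapted to the (now left exact) functor $F\op$ in the required sense, and using the identification $D^-(\cA)\op = D^+(\cA\op)$.

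The first step is a resolution lemma: every bounded-below complex $A^{\bullet}$ in $\cA$ admits a quasi-isomorphism $A^{\bullet}\to X^{\bullet}$ where $X^{\bullet}$ is a bounded-below complex with terms in $\cX$. Because $\cX$ is cogenerating, each object embeds into an object of $\cX$ with a cokernel that can again be so embedded; organizing such choices compatibly (for instance, via a Cartan--Eilenberg style double complex and passing to the total complex) produces $X^{\bullet}$, and a standard spectral sequence or inductive cone argument shows that the comparison map is a quasi-isomorphism.

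The central technical input is the following acyclicity claim: any bounded-below exact complex $Z^{\bullet}$ with $Z^i\in \cX$ for all $i$ is carried by $F$ to an exact complex. Indeed, if $n$ is the smallest integer with $Z^n\neq 0$, then exactness forces $Z^n\hookrightarrow Z^{n+1}$, so truncation yields an exact sequence
\[
0\to Z^n\to Z^{n+1}\to Z^{n+2}\to\cdots
\]
of objects of $\cX$. The adapted hypothesis then gives exactness of
\[
0\to F(Z^n)\to F(Z^{n+1})\to F(Z^{n+2})\to\cdots ,
\]
and since $F(Z^i)=0$ for $i<n$, this is precisely the assertion that $F(Z^{\bullet})$ is acyclic.

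Granting closure of $\cX$ under finite direct sums, the mapping cone of any quasi-isomorphism $f\colon X^{\bullet}\to Y^{\bullet}$ between bounded-below complexes in $\cX$ is an acyclic bounded-below complex with terms in $\cX$, so the acyclicity claim implies $F(f)$ is again a quasi-isomorphism. Combined with the resolution lemma, this makes the canonical functor $K^+(\cX)\to D^+(\cA)$ an equivalence after inverting quasi-isomorphisms, and composing a quasi-inverse with the functor induced by $F$ produces the desired $RF\colon D^+(\cA)\to D^+(\cB)$; the universal property is then formal, and for $X\in \cX$ the trivial coresolution $X\to X$ gives $RF(X)\cong F(X)$. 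The main obstacle is the acyclicity claim above, which is exactly the reason the adaptation hypothesis was formulated as it was: the hypothesis is tailored to make the truncation argument work, and once the claim is in hand, the remaining steps are routine derived-category bookkeeping.
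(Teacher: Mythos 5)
This statement is quoted (with attribution to Gelfand--Manin, Theorem III.6.8) and used as a black box; the paper contains no proof of it, so there is no in-paper argument to compare yours against. That said, your sketch is correct and is essentially the proof given in that reference: build bounded-below $\cX$-coresolutions of complexes using that $\cX$ is cogenerating; show that $F$ carries a bounded-below acyclic $\cX$-complex to an acyclic complex by truncating at the lowest nonzero term, which is precisely where the form of the adaptation hypothesis in Definition \ref{Adaptable subcategories} is used; conclude via the mapping-cone trick that $F$ preserves quasi-isomorphisms between bounded-below $\cX$-complexes; and then obtain $RF$ from the equivalence between $K^+(\cX)$ localized at quasi-isomorphisms and $D^+(\cA)$. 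Your reduction of part (ii) to part (i) through opposite categories is also standard and correct.

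Two remarks worth recording. First, the closure of $\cX$ under finite direct sums that you ``grant'' for the mapping cone to have $\cX$-terms is in fact part of the original Gelfand--Manin definition of an adapted class, even though the paper's restatement in Definition \ref{Adaptable subcategories} omits it; in all of the paper's applications $\cX$ is the essential image of an additive endofunctor, so this holds automatically, but it is a genuine hypothesis and you were right to single it out. Second, the conclusion ``$RF(X)\cong X$'' as printed is a typo for ``$RF(X)\cong F(X)$''; you correctly read through it and derived the intended statement from the trivial coresolution $X\to X$, which is also what is needed for the observation immediately after the theorem that $R^iF$ and $L_iF$ vanish on $\cX$ in positive degrees.
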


The $i$th right and left derived functor is define to be $R^iF:=H^{i}(RF)$ and $L_iF:=H^{-i}(LF)$ , respectively. Note that $R^iF(X)=0$ and $L_iF(X)=0$ for $X\in \cX$ and $i>0$ in these cases.

\section{Generalization of the Nakayama functor}\label{Nakayama functors section}

\subsection{Derived functors without enough projectives or injectives}\label{Derived functors without enough projectives or injectives}

Our goal is to find easy criteria for when a subcategory is adapted to a functor when the subcategory is the image of another functor. We fix a preadditive category $\cD$ and abelian categories $\cA$ and $\cB$. By abuse of notation, in the following definition we identify the image $\im F$ of a functor $F\colon \cD\to \cA$ with the full subcategory of $\cA$ consisting of all objects of the form $F(D)$ for $D\in \cD$.

\begin{Definition}\label{definition:3.0}
Let $F\colon \cD\to \cA$ be a functor.
\begin{enumerate}
\item We say that $F$ is \emphbf{cogenerating} if $\im F$ is cogenerating in $\cA$;
\item We say that $F$ is \emphbf{generating} if $\im F$ is generating in $\cA$;
\item Let $G\colon \cA\to \cB$ be a left or right exact functor. We say that $F$ is \emphbf{adapted to} $G$ if $\im F$ is adapted to $G$;
\end{enumerate}
\end{Definition}
If $\sfS=(S,\epsilon,\Delta)$ is a comonad on $\cA$, then $S$ is generating if and only if $\epsilon$ is an epimorphism. Dually, if $\sfT=(T,\eta,\mu)$ is a monad on $\cA$, then $T$ is cogenerating if and only if $\eta$ is a monomorphism.

\begin{Proposition}\label{Adaptable classes for monads and comonads}
Let $G\colon \cA\to \cB$ be a functor.
\begin{enumerate}
\item\label{Adaptable classes for monads and comonads:2} Assume $G$ is left exact, $\sfT=(T,\eta,\mu)$ is a monad on $\cA$, and the functor $T\colon \cA\to \cA$ is cogenerating. If $T$ and $G\circ T$ are exact functors, then $T$ is adapted to $G$;
\item\label{Adaptable classes for monads and comonads:1} Assume $G$ is right exact, $\sfS=(S,\epsilon,\Delta)$ is comonad on $\cA$, and the functor $S\colon \cA\to \cA$ is generating. If $S$ and $G\circ S$ are exact functors, then $S$ is adapted to $G$.
\end{enumerate}
\end{Proposition}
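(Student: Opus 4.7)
The plan is to prove part (i); part (ii) follows by the dual argument, using the simplicial bar construction for $\sfS$ in place of the cosimplicial cobar for $\sfT$, $\epsilon$ an epimorphism in place of $\eta$ a monomorphism, and right-exactness in place of left-exactness. The cogenerating half of the adaptability condition is immediate: as noted just before the proposition, $T$ being cogenerating is equivalent to $\eta$ being a monomorphism, so $\eta_A\colon A \hookrightarrow T(A)$ already exhibits $\im T$ as cogenerating.

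For the exactness-preservation condition, I would fix an exact sequence $0 \to X^0 \to X^1 \to X^2 \to \cdots$ with $X^p = T(A^p)$ and assemble the first-quadrant bicomplex
\[
D^{p, q} = T^{q+1}(X^p), \qquad p, q \geq 0,
\]
whose horizontal differentials are $T^{q+1}$ applied to the maps of the original sequence and whose vertical differentials $\partial^q \colon T^{q+1}(X^p) \to T^{q+2}(X^p)$ are the standard cosimplicial cobar coboundaries built from $\eta$; naturality of $\eta$ makes horizontal and vertical commute. The rows are handled by the hypotheses: the $q$-th row equals $T \circ T^q$ applied to the exact sequence $X^\bullet$, hence is exact by repeated exactness of $T$, and once $G$ is applied it becomes $GT$ applied to the exact complex $T^q(X^\bullet)$, still exact because $GT$ is exact. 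For the columns, I would exploit the canonical free $\sfT$-algebra structure $\mu_{A^p} \colon T(X^p) \to X^p$: the augmented cobar $0 \to X^p \to T(X^p) \to T^2(X^p) \to \cdots$ admits a contracting homotopy assembled from $\mu$, a natural collection of morphisms of $\cA$ that survives under $G$. Combined with left-exactness of $G$, this identifies $H^0(G(D^{p, *}))$ with $G(X^p)$ and kills every higher vertical cohomology.

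The conclusion follows by comparing the two spectral sequences of the first-quadrant bicomplex $G(D^{\bullet, \bullet})$, both convergent because the antidiagonals are finite. The row-first sequence collapses at $E_1 = 0$, so the total cohomology vanishes; the column-first sequence has $E_1$ concentrated in $q = 0$ with $E_1^{p, 0} = G(X^p)$, so $E_2^{p, 0} = H^p(G(X^\bullet))$ with no room for further differentials. Equating the two forces $H^p(G(X^\bullet)) = 0$ for all $p$, which is exactly the asserted preservation of exactness.

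The main obstacle will be producing the contracting homotopy for the augmented cobar of a free $\sfT$-algebra --- this is where the monad axioms $\mu \circ \eta_T = 1_T = \mu \circ T\eta$ really enter, and the cosimplicial bookkeeping needs a little care; once it is pinned down the rest is mechanical, and part (ii) is the exact dual with $\eta$ replaced by $\epsilon$, $\mu$ by $\Delta$, the cobar by the bar, and the direction of the original sequence reversed.
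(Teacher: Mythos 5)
Your proof is correct and takes a genuinely different route from the paper's. You work with part (i) directly (the monad/left-exact case) and dualize; the paper instead proves part (ii) (the comonad/right-exact case) and dualizes, but this is immaterial. The shared ingredient is the bar/cobar complex of the (co)monad and the fact that it is contractible on free (co)algebras: you construct the contracting homotopy on the cobar of $X^p = T(A^p)$ from the free algebra structure $\mu_{A^p}$, while the paper invokes the same fact for the bar construction of $\sfS$-projective objects by citing Weibel's Proposition 8.6.8. From that point the strategies diverge. The paper proceeds elementarily: it shows by a snake-lemma argument that whenever $0 \to A_3 \to A_2 \to A_1 \to 0$ is exact and the bar complexes $C(A_1)$, $C(A_2)$ are acyclic after applying $G$, then $G$ preserves the short exact sequence and $C(A_3)$ is acyclic too; iterating this along a finite resolution gives the conclusion without invoking any homological machinery beyond the snake lemma. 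You instead package the same information into the first-quadrant bicomplex $D^{p,q} = T^{q+1}(X^p)$ and compare the two spectral sequences of $G(D^{\bullet,\bullet})$: rows of $G(D)$ are exact because $GT^{q+1} = (GT)\circ T^q$ and both $T$ and $GT$ are exact, so one spectral sequence vanishes at $E_1$; columns of $G(D)$ are contractible (contractibility survives any additive functor, so your appeal to left-exactness of $G$ here is actually superfluous, though harmless), so the other spectral sequence collapses to $E_2^{p,0} = H^p(G(X^\bullet))$. Convergence of both to the same total cohomology forces $H^p(G(X^\bullet)) = 0$. Your argument is more conceptual but imports the spectral-sequence comparison apparatus; the paper's is longer on the page but lower-tech. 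Both are valid, and both boil down to the same contractibility fact together with exactness of $T$ and $GT$.
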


\begin{proof}
We only show part $\ref{Adaptable classes for monads and comonads:1}$, part $\ref{Adaptable classes for monads and comonads:2}$ follows dually. For $A\in \cA$ consider the complex $C'(A):=\cdots \xrightarrow{(\delta_2)_A} S^2(A)\xrightarrow{(\delta_1)_A} S(A)\xrightarrow{(\delta_0)_A} A\to 0$
where $\delta_n = \sum_{i=0}^n (-1)^i S^{n-i}(\epsilon_{S^i})$.  By 4.2 in \cite{BB69} the complex $\cA(S(A'),C'(A))$ is acyclic for all objects $A'$. Since $S$ is generating, the subcategory of objects of the form $S(A')$ is generating. Hence, any sequence $A_1\to A_2\to A_3$ is exact if the sequence $\cA(S('A),A_1)\to \cA(S('A),A_2)\to \cA(S('A),A_3)$ is exact for all $A'\in \cA$. In particular, the complex $C'(A)$ is itself acyclic. 

 Let $C(A):=G(C'(A))$. We prove the following claim: Let $0\to A_3\xrightarrow{f_2} A_2\xrightarrow{f_1} A_1\to 0$ be an exact sequence, and assume the complexes $C(A_1)$ and $C(A_2)$ are acyclic. Then the following holds:
\begin{enumerate}
\item\label{property:2} The sequence $0\to G(A_3)\xrightarrow{G(f_2)} G(A_2)\xrightarrow{G(f_1)} G(A_1)\to 0$ is exact;
\item\label{property:1} The complex $C(A_3)$ is acyclic.
\end{enumerate} 
To show this, we consider the commutative diagram
\[
\begin{tikzpicture}[description/.style={fill=white,inner sep=2pt}]
\matrix(m) [matrix of math nodes,row sep=2.5em,column sep=5.0em,text height=1.5ex, text depth=0.25ex] 
{ GS^2(A_3) &  GS^2(A_2)   & GS^2(A_1)   \\
 GS(A_3) &  GS(A_2)   & GS(A_1)  \\ };
\path[->]

(m-1-1) edge node[auto] {$GS^2(f_2)$} 	    										(m-1-2)
(m-1-2) edge node[auto] {$GS^2(f_1)$} 	    										(m-1-3)

(m-2-1) edge node[below] {$GS(f_2)$} 	    										(m-2-2)
(m-2-2) edge node[below] {$GS(f_1)$} 	    										(m-2-3)

(m-1-1) edge node[auto] {$G((\delta_1)_{A_3})$} 	    		    (m-2-1)
(m-1-2) edge node[auto] {$G((\delta_1)_{A_2})$} 	    			(m-2-2)
(m-1-3) edge node[auto] {$G((\delta_1)_{A_1})$} 	    			(m-2-3);	
\end{tikzpicture}
\]
with exact rows. Since $GS^3(f_1)\colon GS^3(A_2)\to GS^3(A_1)$ is an epimorphism, and $C(A_1)$ and $C(A_2)$ are acyclic, it follows that the induced map 
\[
\Ker G((\delta_1)_{A_2})\to \Ker G((\delta_1)_{A_1})
\] is an epimorphism. Hence, applying the snake lemma and using that the sequence $GS^2(A_3)\to GS(A_3)\to G(A_3)\to 0$ is exact since $G$ is right exact, we get that the sequence $0\to G(A_3)\xrightarrow{G(f_2)} G(A_2)\xrightarrow{G(f_1)} G(A_1)\to 0$ is exact. This proves part \ref{property:2} of the claim. Also it follows that we have a short exact sequences 
\[
0\to C(A_3)\xrightarrow{C(f_2)} C(A_2)\xrightarrow{C(f_1)} C(A_1)\to 0
\]
of complexes. Since two of the complexes are acyclic, the third one is also acyclic. This proves part \ref{property:1} of the claim.

To prove the result in the proposition, it is sufficient to show that for an exact sequence
\[
0\to K\to A_{n} \xrightarrow{f_n} \cdots \xrightarrow{f_2} A_1\xrightarrow{f_1} A_0\to 0
\]   
with $A_i$ being $\sfS$-projective for all $i$, we have that 
\begin{align}\label{What we want to show is exact}
0\to G(K)\to G(A_{n}) \xrightarrow{G(f_n)} \cdots \xrightarrow{G(f_2)} G(A_1)\xrightarrow{G(f_1)} G(A_0)\to 0
\end{align}
is exact. To this end, note that since $A_i$ is $\sfS$-projective, the complex $C'(A_i)$ is contractible by the proof of Proposition 8.6.8 in \cite{Wei94} (the proposition only says that $C'(A_i)$ is acyclic, but in the proof they actually show that it is contractible). Hence, the complex $C(A_i)$ is also contractible and therefore acyclic. By part \ref{property:1} of the claim it follows that the complex $C(\Ker f_i)$ is acyclic for all $i$. But then by part \ref{property:2} of the claim it follows that the sequence \eqref{What we want to show is exact} is exact. This proves the result.
\end{proof}

\begin{Proposition}\label{Adaptable classes for adjoints}
Let $G\colon \cA\to \cB$ be a functor. 
\begin{enumerate}
\item\label{Adaptable classes for adjoints:1} Assume $G$ is left exact, $(L,R,\phi,\alpha,\beta)\colon \cA\to \cD$ is an adjunction, and $R$ is cogenerating. If $R\circ L$ and $G\circ R\circ L$ are exact functors, then $R$ is adapted to $G$;
\item\label{Adaptable classes for adjoints:2} Assume $G$ is right exact, $(L',R',\phi',\alpha',\beta')\colon \cD\to \cA$ is an adjunction, and $L'$ is generating. If $L'\circ R'$ and $G\circ L'\circ R'$ are exact functors, then $L'$ is adapted to $G$.
\end{enumerate}
\end{Proposition}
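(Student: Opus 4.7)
I would prove (i) by reducing to Proposition~\ref{Adaptable classes for monads and comonads}(i) applied to the induced monad $\sfT = (RL, \alpha, R(\beta_L))$ on $\cA$, and then transferring the conclusion from $\sfT$-injective objects to $\im R$. Part (ii) then follows formally by passage to opposite categories, under which the adjunction $L' \dashv R'$ becomes $R'\op \dashv L'\op$, right-exactness of $G$ becomes left-exactness of $G\op$, and generation becomes cogeneration, so that (ii) reduces to the statement of (i).

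To apply Proposition~\ref{Adaptable classes for monads and comonads}(i) to $\sfT$, I would first observe that $T = RL$ is cogenerating: a monomorphism $\iota\colon A\hookrightarrow R(D)$ coming from cogeneration of $R$ factors by the adjunction as $\iota = R(\bar\iota)\circ \alpha_A$ for the mate $\bar\iota\colon L(A)\to D$, which forces $\alpha_A$ to be a monomorphism. Exactness of $T$ and of $G\circ T$ is assumed, so Proposition~\ref{Adaptable classes for monads and comonads}(i) applies. Moreover, reading its proof dually to the one written out for (ii), one sees that the induction there in fact proves the stronger statement that the whole class of $\sfT$-injective objects in $\cA$ (not only $\im T$) is adapted to $G$: the only input needed for the induction step is the contractibility of the cobar complex $C'(X) = 0 \to X \to T(X) \to T^2(X) \to \cdots$, which holds for any $\sfT$-injective $X$, and which is used precisely to reduce exact resolutions to ones with $\sfT$-injective building blocks.

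Finally, every $X = R(D) \in \im R$ is $\sfT$-injective, since the triangular identity $R(\beta_D)\circ \alpha_{R(D)} = 1_{R(D)}$ splits the unit. Combined with the hypothesis that $\im R$ is cogenerating, $\im R$ is a cogenerating subclass of the adapted class of $\sfT$-injectives, hence is itself adapted to $G$, proving (i). The main subtlety is this reinterpretation of Proposition~\ref{Adaptable classes for monads and comonads}(i) to cover $\sfT$-injective objects rather than just $\im T$; if one prefers not to rely on that reading of the proof, the alternative is to re-run the induction directly for $\im R$, with the crucial ingredient being the contractibility of $C'(X)$ for $X\in \im R$ obtained from the iterated splittings $R(\beta_D), R(\beta_{LRD}),\ldots$ together with a dualized version of the short-exact-sequence claim from the proof of Proposition~\ref{Adaptable classes for monads and comonads}(ii), in which, for any short exact sequence $0 \to A \to B \to C \to 0$ with $C(A)$ and $C(B)$ acyclic, one identifies $H^0\tilde C(C)$ with $G(C)$ via the monomorphism $G(\alpha_C)$ to obtain simultaneously the acyclicity of $C(C)$ and the exactness of $0 \to G(A) \to G(B) \to G(C) \to 0$.
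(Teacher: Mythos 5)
Your proof is correct and follows essentially the same route as the paper: both reduce to Proposition~\ref{Adaptable classes for monads and comonads} applied to the induced monad $\sfT=(R\circ L,\alpha,R(\beta_L))$, both note that $R(A)$ is a summand of $RLR(A)$ (i.e.\ $\sfT$-injective) via the triangle identities, and both deduce adaptedness of $\im R$ from adaptedness of the larger class. You are somewhat more explicit than the paper in observing that the proof of Proposition~\ref{Adaptable classes for monads and comonads} in fact covers all $\sfT$-injective (resp.\ $\sfS$-projective) objects, not only $\im T$, which is precisely what the paper's terse final sentence ``since $R(A)$ is a summand of $RLR(A)$, it follows that $R$ itself is adapted to $G$'' relies on.
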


\begin{proof}
From the triangle identities we have that $R(A)$ is a summand of $RLR(A)$. Hence, if $R$ is cogenerating, then $R\circ L$ is cogenerating. Therefore, by Proposition \ref{Adaptable classes for monads and comonads} applied to the induced monad $\sfT=(R\circ L,\alpha, R(\beta_{L}))$, we get that $R\circ L$ is adapted to $G$. Finally, since $R(A)$ is a summand of $RLR(A)$ for any $A\in \cA$, it follows that $R$ itself is adapted to $G$. Part $\ref{Adaptable classes for adjoints:2}$ is proved dually. 
\end{proof}

\subsection{Definition of Nakayama functors}\label{Nakayama functors subsection}

Let $\Lambda$ be a finite-dimensional algebra over a field $k$, and consider the restriction functor 
\[
f^*:= \res^{\Lambda}_k\colon \Lambda\text{-}\md\to k\text{-}\md.
\] 
It has a left adjoint $f_!:=(\Lambda\otimes_k-)\colon k\text{-}\md\to \Lambda\text{-}\md$, and it turns out that the adjoint pair $f_!\dashv f^*$ interact nicely with the Nakayama functor $\nu=\Hom_k(\Lambda,k)\otimes_{\Lambda}-\colon \Lambda\text{-}\md\to \Lambda\text{-}\md$. More precisely, the composite $\nu\circ f_!$ is right adjoint to $f^*$, and there exists an isomorphism $\nu^-\circ \nu\circ f_!\cong f_!$ where $\nu^-$ is the right adjoint of $\nu$. We generalize this in the following definition. 

\begin{Definition}\label{Nakayama functor for adjoint pair}
Let $\cD$ be an additive category and $\cA$ an abelian category, and let $f^*\colon \cA\to \cD$ be a faithful functor with left adjoint $f_!\colon \cD\to \cA$. A \emphbf{Nakayama functor} relative to $f_!\dashv f^*$ is a functor $\nu\colon \mathcal{A}\to \mathcal{A}$ with  a right adjoint $\nu^-$ satisfying:
\begin{enumerate}
\item $\nu\circ f_!$ is right adjoint to $f^*$;
\item The unit of $\nu\dashv\nu^-$ induces an isomorphism $f_!\xrightarrow{\cong} \nu^-\circ \nu \circ f_!$ when precomposed with $f_!$.
\end{enumerate}
\end{Definition}

We also say that $f_!\dashv f^*$ has a Nakayama functor $\nu$. In this case we let $f_*:=\nu\circ f_!$ denote the right adjoint. Since the left adjoint of a functor is unique up to isomorphism, the definition of the Nakayama functor only depends on the functor $f^*$ in the adjunction $f_!\dashv f^*$. It also turns out that the Nakayama functor is unique up to precomposing with an equivalence $\Phi\colon \cA\to \cA$ satisfying $\Phi\circ f_!=f_!$, see Theorem \ref{Uniqueness Nakayama functor adjoint pairs}. Note that for a finite-dimensional algebra, the classical Nakayama functor is a Nakayama functor relative to the adjoint pair $(\Lambda\otimes_k-)\dashv \res^{\Lambda}_k$ as described above. 

We fix $(\nu,\nu^- ,\theta,\lambda,\sigma )\colon \cA\to \cA$ to be the adjunction of the Nakayama functor. The following holds for adjoint pairs with Nakayama functor.

\begin{Lemma}\label{Basic properties of Nakayama functor adjoint pairs}
Assume $f_!\dashv f^*$ admits a Nakayama functor $\nu$. The following holds:
\begin{enumerate}
\item\label{generating functor} $f_!$ is generating;
\item\label{cogenerating functor} $f_*$ is cogenerating;
\item\label{Isomorphism Nakayama functor} The restriction $\nu\colon \im f_!\to \im f_*$ is an isomorphism of categories;
\item\label{Counit isomorphism Nakayama functor} $\sigma_{f_*}\colon \nu \circ \nu^{-}\circ f_*\to f_*$ is an isomorphism;
\item\label{on adjunctions} There are adjunctions $f^*\circ \nu \dashv f_!\dashv f^*\dashv f_*\dashv f^*\circ \nu^-$
\item\label{f_! is adapted} $f_!$ is adapted to $\nu$;
\item\label{f_* is adapted} $f_*$ is adapted to $\nu^-$.
\end{enumerate} 
\end{Lemma}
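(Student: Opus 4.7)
I would dispatch the seven claims in order, using two essential inputs: the faithfulness of $f^{*}$, and condition (ii) read as the functorial isomorphism $\lambda_{f_{!}}\colon f_{!}\xrightarrow{\cong}\nu^{-}\circ f_{*}$. For (\ref{generating functor}) and (\ref{cogenerating functor}) I would invoke the standard Ab-enriched fact that a faithful right adjoint has a pointwise epic counit and a faithful left adjoint has a pointwise monic unit (a short triangle-identity argument). Applied to $f_{!}\dashv f^{*}$ this shows the counit $\counit{f_{!}}{f^{*}}\colon f_{!}f^{*}A\to A$ is epic, giving (\ref{generating functor}); applied to $f^{*}\dashv f_{*}$, which is hypothesis (i) of the definition, it shows the unit $\unit{f^{*}}{f_{*}}\colon A\to f_{*}f^{*}A$ is monic, giving (\ref{cogenerating functor}).

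For (\ref{Isomorphism Nakayama functor}) the restriction of $\nu$ is the identity on objects, and on morphisms the composite
\[
\cA(f_{!}D_{1},f_{!}D_{2})\xrightarrow{\nu}\cA(\nu f_{!}D_{1},\nu f_{!}D_{2})\xrightarrow{\adjiso{\nu}{\nu^{-}}}\cA(f_{!}D_{1},\nu^{-}\nu f_{!}D_{2})
\]
sends $\phi$ to $\lambda_{f_{!}D_{2}}\circ \phi$ by naturality of $\lambda$, which is a bijection by condition (ii); hence the first arrow is a bijection as well. For (\ref{Counit isomorphism Nakayama functor}), the triangle identity $\sigma_{\nu A}\circ \nu(\lambda_{A})=1_{\nu A}$ specialised to $A=f_{!}D$, combined with the invertibility of $\lambda_{f_{!}D}$, forces $\sigma_{f_{*}D}$ to be the inverse of $\nu(\lambda_{f_{!}D})$.

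For (\ref{on adjunctions}), the middle adjunctions $f_{!}\dashv f^{*}\dashv f_{*}$ are given or are hypothesis (i); composing $f_{!}\dashv f^{*}$ with $\nu\dashv\nu^{-}$ supplies the right-hand $f_{*}\dashv f^{*}\nu^{-}$; and the left-hand $f^{*}\nu\dashv f_{!}$ is obtained by taking left adjoints in $f_{!}\cong \nu^{-}\circ f_{*}$, using that the left adjoint of a composition is the composition of left adjoints. A consequence is that $f^{*}$, $f_{!}$ and $f_{*}$ all acquire adjoints on both sides and are therefore exact, which is exactly what I will need for (\ref{f_! is adapted}) and (\ref{f_* is adapted}). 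Both of these then follow from Proposition \ref{Adaptable classes for adjoints}: part (ii) applied to $f_{!}\dashv f^{*}$ and $G=\nu$ yields (\ref{f_! is adapted}), since $f_{!}$ is generating and both $f_{!}\circ f^{*}$ and $\nu\circ f_{!}\circ f^{*}=f_{*}\circ f^{*}$ are exact as compositions of exact functors; and part (i) applied to $f^{*}\dashv f_{*}$ and $G=\nu^{-}$ yields (\ref{f_* is adapted}), since $f_{*}$ is cogenerating, $f_{*}\circ f^{*}$ is exact, and $\nu^{-}\circ f_{*}\circ f^{*}=\nu^{-}\nu f_{!}f^{*}\cong f_{!}\circ f^{*}$ by condition (ii), hence exact. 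The main conceptual step is (\ref{on adjunctions}); everything else becomes routine once condition (ii) is read as the functorial isomorphism $f_{!}\cong \nu^{-}\circ f_{*}$, which is both what produces a left adjoint to $f_{!}$ and what reduces $\nu^{-}\circ f_{*}\circ f^{*}$ to $f_{!}\circ f^{*}$, and that is where I expect the bookkeeping to be most delicate.
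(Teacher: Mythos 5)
Your proof is correct and follows essentially the same route as the paper: faithfulness of $f^*$ via MacLane IV.3.1 for (i)--(ii), the triangle identity plus invertibility of $\lambda_{f_!}$ for (iii)--(iv), composing adjunctions and reading condition (ii) as $f_!\cong\nu^-\circ f_*$ for (v), and Proposition \ref{Adaptable classes for adjoints} for (vi)--(vii). You spell out the exactness checks for (vi) and (vii) in more detail than the paper, but the underlying argument is the same.
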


\begin{proof}
Since $f^*$ is faithful, it follows by \cite[Theorem IV.3.1]{MLan98} and its dual that the counit of $f_!\dashv f^*$ is an epimorphism and the unit of $f^*\dashv f_*$ is a monomorphism. This shows that $f_!$ is generating and $f_*$ is cogenerating. Part $\ref{Isomorphism Nakayama functor}$ is obvious. For part $\ref{Counit isomorphism Nakayama functor}$, note that $\sigma_{\nu\circ f_!}\circ \nu(\lambda_{f_!})=1_{\nu\circ f_!}$ by the triangle identities for $\nu\dashv \nu^-$. Since $\lambda_{f_!}$ is an isomorphism, it follows that $\sigma_{f_*}=\sigma_{\nu\circ f_!}$ is an isomorphism. For part $\ref{on adjunctions}$, since $f^*\dashv \nu\circ f_!$, we have 
\[
f^*\circ \nu \dashv \nu^-\circ (\nu\circ f_!)\cong f_!.
\] 
Also, since $f_!\dashv f^*$, we have that $\nu\circ f_!\dashv f^*\circ \nu^-$. Part $\ref{f_! is adapted}$ and $\ref{f_* is adapted}$ follows immediately from Proposition $\ref{Adaptable classes for adjoints}$.
\end{proof}

Note that if $\nu$ is a Nakayama functor relative to $f_!\dashv f^*$, then $\nu^-$ is a Nakayama functor relative to $f_*\dashv f^*$ in the opposite categories.

For the constructions and results in Section \ref{Gorenstein categories for comonads} we only need to work with the endofunctor $P:=f_!\circ f^*$, and not with the adjoint pair. We therefore introduce the following definition.

\begin{Definition}\label{definition:N1}
Let $\cA$ be an abelian category with a generating functor $P\colon \cA\to \cA$. A \emphbf{Nakayama functor} relative to $P$ is a functor $\nu\colon \cA\to \cA$ with a right adjoint $\nu^-\colon \cA\to \cA$ satisfying the following:

\begin{enumerate}
\item\label{definition:N1,1} $\nu\circ P$ is right adjoint to $P$;
\item\label{definition:N1,2} The unit of $\nu\dashv\nu^-$ induces an isomorphism $P\xrightarrow{\cong} \nu^-\circ \nu \circ P$ when precomposed with $P$.
\end{enumerate}
\end{Definition} 

We also say that $P$ has a Nakayama functor $\nu$. Note that if $\nu$ is a Nakayama functor for for $f_!\dashv f^*$, then it is a Nakayama functor for the composite $P:=f_!\circ f^*$.  

In the following we let $I=\nu\circ P$. We also fix $(\nu,\nu^- ,\theta,\lambda,\sigma )\colon \cA\to \cA$ to be the adjunction of the Nakayama functor.

\begin{Lemma}\label{lemma:N1}
Let $\cA$ be an abelian category with a generating functor $P\colon \cA\to \cA$ and a Nakayama functor $\nu$ relative to $P$. The following holds:

\begin{enumerate}
	\item\label{lemma:N1,1} The restriction $\nu\colon \im P\to \im I$ is an isomorphism of categories;
	\item\label{lemma:N1,2} $\sigma_I\colon \nu \circ \nu^{-}\circ I\to I$ is an isomorphism;
	\item\label{lemma:N1,3} There are adjunctions $P\circ \nu\dashv P\dashv I\dashv I\circ \nu^{-}$;
	\item\label{lemma:N1,5} $P$ is adapted to $\nu$;
	\item\label{lemma:N1,6} $I$ is adapted to $\nu^-$;
	\item\label{lemma:N1,4} $I\colon \cA\to \cA$ is a cogenerating functor;
	\item\label{lemma:N1,7} $P$ is faithful;
	\item\label{lemma:N1,8} $I$ is faithful.
\end{enumerate}

\end{Lemma}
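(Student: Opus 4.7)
The plan is to prove the eight statements roughly in the order (ii), (iii), (i), (vii), (vi), (iv), (v), (viii), with each part feeding into the next. I would start by verifying (ii) directly from the triangle identity $\sigma_{\nu X}\circ \nu(\lambda_X)=1_{\nu X}$ evaluated at $X=P(A)$: the isomorphism $\lambda_{P(A)}$ granted by condition (2) of Definition \ref{definition:N1} forces $\sigma_{I(A)}$ to be an isomorphism. For (iii), I would compose the given adjunction $P\dashv I$ with $\nu\dashv \nu^-$: composing on one side yields $P\nu\dashv \nu^- I$, which becomes $P\nu\dashv P$ via $\lambda_P^{-1}$; composing on the other side yields $I=\nu P\dashv I\nu^-$.

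For (i), I would define the inverse to $\nu\colon \im P\to\im I$ as the identity on objects and as $g\mapsto \lambda_{P(A_2)}^{-1}\circ \nu^-(g)\circ \lambda_{P(A_1)}$ on morphisms. Naturality of $\lambda$ yields one composite identity, while naturality of $\sigma$ together with (ii) (which identifies $\nu(\lambda_{P(A)})$ with $\sigma_{I(A)}^{-1}$) yields the other.

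The faithfulness statements rely on the identity $I=\nu\circ P$, so that $I$ annihilates a morphism of the form $P(h)$ exactly when $P$ does. For (vii), given $f\colon A\to B$ with $P(f)=0$, choose an epimorphism $p\colon P(C)\twoheadrightarrow A$ from $P$ being generating and set $g=fp$. Writing $\alpha$ for the unit of $P\dashv I$, the $(P\dashv I)$-adjunct of $g$ is $I(g)\circ \alpha_C=\nu(P(g))\circ \alpha_C=0$, forcing $g=0$ and hence $f=0$. Part (vi) is the main obstacle, since in Definition \ref{definition:N1} we lack the faithful functor $f^*$ that produced monomorphisms in Lemma \ref{Basic properties of Nakayama functor adjoint pairs}. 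I would show directly that $\alpha_A\colon A\to IP(A)$ is monic: the triangle identity makes $P(\alpha_A)$ a split monomorphism, and since $P$ is exact (having both adjoints by (iii)), applying $P$ to $0\to \ker\alpha_A\to A\to IP(A)$ gives $P(\ker\alpha_A)=0$. Any epimorphism $P(B')\twoheadrightarrow \ker\alpha_A$ is then sent by $P$ to zero, so (vii) forces $\ker\alpha_A=0$.

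The remaining statements are then formal. For (iv), apply Proposition \ref{Adaptable classes for adjoints}(ii) to $G=\nu$ with the adjunction $P\dashv I$: the required exactness of $P\circ I$ and of $\nu\circ P\circ I=I\circ I$ is automatic since $P$ and $I$ are both exact by (iii). For (v), dually, Proposition \ref{Adaptable classes for adjoints}(i) applied to $G=\nu^-$ requires $I$ cogenerating (by (vi)) and the exactness of $I\circ P$ and $\nu^-\circ I\circ P\cong P\circ P$, using the isomorphism $\lambda_P$. Finally (viii) mirrors (vii): given $h\colon A\to B$ with $I(h)=0$, use (vi) to embed $B\hookrightarrow I(C)$ by a monomorphism $m$, so that $I(mh)=0$; part (i) says $\nu$ is faithful on $\im P$, so $P(mh)=0$, making the $(P\dashv I)$-adjunct of $mh$ vanish and hence $mh=0$, whence $h=0$.
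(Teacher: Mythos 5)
Your proof is correct. For parts (i)--(v) it is essentially the paper's argument (the paper defers to the proof of Lemma \ref{Basic properties of Nakayama functor adjoint pairs}: the triangle identity for (ii), composing adjunctions for (iii), Proposition \ref{Adaptable classes for adjoints} for the adaptedness statements). You simply spell out the inverse for (i) where the paper calls it obvious, and your identification $\nu(\lambda_{P(A)})=\sigma_{I(A)}^{-1}$ is exactly the right ingredient.

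Where you diverge is the cluster (vi), (vii), (viii). The paper proves $I$ cogenerating first, via the conjugate machinery: it invokes Proposition \ref{Conjugate units and counits} and Proposition \ref{prop:1.65} to get that $\Ker(\unit{I}{I\nu^-})$ is right adjoint to $\Coker(\counit{P}{I})$, notes that $P$ generating makes the latter cokernel vanish, hence the former kernel vanishes; it then deduces faithfulness of $P$ from the resulting monomorphism $\unit{P}{I}_B$ via naturality $\unit{P}{I}_B\circ f = IP(f)\circ\unit{P}{I}_A$. You instead prove faithfulness of $P$ first and independently, using only the adjunction bijection for $P\dashv I$, the factorization $I=\nu\circ P$, and that $P$ is generating; then you use exactness and faithfulness of $P$ (together with the split mono $P(\unit{P}{I}_A)$) to force $\Ker\unit{P}{I}_A=0$, giving (vi). Your route avoids the $\Coker\dashv\Ker$ adjunction entirely and makes the dependency $P$-generating $\Rightarrow$ $P$-faithful transparent; the paper's route is more uniform with the dualities it has already set up. Both are sound, and both handle (viii) as a mirror of (vii) once (vi) is in place.

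One small remark: in (viii) you don't strictly need part (i) to pass from $I(mh)=0$ to $P(mh)=0$; since $I=\nu\circ P$ and $\nu|_{\im P}$ is faithful (being an isomorphism) this is fine, but you could equally note that $\phi^{-1}(mh)=\counit{P}{I}_C\circ P(mh)$, and $P(mh)$ lands in a computation already controlled by $I(mh)=0$ together with (i). Either phrasing works.
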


\begin{proof}
Part $\ref{lemma:N1,1}$-$\ref{lemma:N1,6}$ is proved similarly to part $\ref{Isomorphism Nakayama functor}$-$\ref{f_* is adapted}$ in Lemma \ref{Basic properties of Nakayama functor adjoint pairs}. For part $\ref{lemma:N1,4}$ note that the functor $\Ker \unit{I}{I\circ \nu^-}$ is right adjoint to the functor $ \Coker \counit{P}{I}$ by Proposition \ref{Conjugate units and counits} and Proposition \ref{prop:1.65}. Since $P$ is generating, we have $\Coker \counit{P}{I}=0$, and therefore $\Ker \unit{I}{I\circ \nu^-}=0$. Hence, $I$ is cogenerating. For part $\ref{lemma:N1,7}$, note that we have an equality $\unit{P}{I}_B\circ f = IP(f)\circ \unit{P}{I}_A$ for any morphism $f\colon A\to B$. Since the functor $I$ is cogenerating, the unit $\unit{P}{I}_B$ of the adjunction $P\dashv I$ is a monomorphism. Hence, the left hand side of the equality is nonzero. Therefore $IP(f)\circ \unit{P}{I}_A\neq 0$, hence $IP(f)\neq 0$, and so $P(f)\neq 0$. This shows that $P$ is faithful. Part $\ref{lemma:N1,8}$ is proved dually.  
\end{proof}

Note that if $\nu$ is a Nakayama functor relative to $P$, then $\nu^-$ is a Nakayama functor relative to $I$ in the opposite category.

We now give several examples of Nakayama functors for adjoints. They are all special cases of Theorem \ref{Theorem:5}, obtained by identifying with functor categories via $\Lambda\text{-}\Md=(k\text{-}\Md)^{\Lambda}$, $(\Lambda_1\otimes_k \Lambda_2)\text{-}\Md=(\Lambda_2\text{-}\Md)^{\Lambda_1}$ and $\cC\text{-}\Md=(k\text{-}\Md)^{\cC}$ in Example \ref{One algebra}, \ref{Two algebras} and \ref{Functor category example}, respectively.

\begin{Example}\label{One algebra}
Let $k$ be a commutative ring, and let $\Lambda$ be a $k$-algebra which is finitely generated and projective as a $k$-module. Furthermore, let
\begin{align*}
f^*:=\res^{\Lambda}_k\colon \Lambda\text{-}\Md\to k\text{-}\Md \quad \text{and} \quad f_!:=\Lambda\otimes_k -\colon k\text{-}\Md \to \Lambda\text{-}\Md
\end{align*}   
Then the functor 
\[
\nu=\Hom_k(\Lambda,k)\otimes_{\Lambda}-\colon \Lambda\text{-}\Md\to \Lambda\text{-}\Md
\]
is a Nakayama functor relative to the adjoint pair $f_!\dashv f^*$. This follows from Theorem \ref{Theorem:5}, but for the convenience of the reader we explain the argument in more detail here: First note that the composite $\nu\circ f_!$ is given by 
\[
\nu\circ f_!\cong \Hom_k(\Lambda,k)\otimes_k -\colon k\text{-}\Md \to \Lambda\text{-}\Md
\]
Since $\Lambda$ is finitely generated projective over $k$, the functor 
\[
\Hom_k(\Lambda,-)\colon k\text{-}\Md \to \Lambda\text{-}\Md
\]
 is exact and preserves coproducts, and hence there exists an isomorphism 
\[
\Hom_{k}(\Lambda,-)\cong \Hom_k(\Lambda,k)\otimes_k -\colon k\text{-}\Md \to \Lambda\text{-}\Md
\]
Therefore, $\nu\circ f_!$ is right adjoint to $f^*$. Also, the right adjoint of $\nu$ is given by
\[
\nu^-=\Hom_{\Lambda}(\Hom_k(\Lambda,k),-)\colon \Lambda\text{-}\Md\to \Lambda\text{-}\Md
\] 
and we have
\[
\nu^-\circ \nu\circ f_! \cong \Hom_{\Lambda}(\Hom_k(\Lambda,k),\Hom_{k}(\Lambda,-))  \cong \Hom_k(\Hom_k(\Lambda,k),-)   
\]
where the second isomorphism follows from the Hom-tensor adjunction. Furthermore, since $\Hom_k(\Lambda,k)$ is finitely generated projective over $k$, we have an isomorphism
\[
\Hom_k(\Hom_k(\Lambda,k),-)\cong \Hom_k(\Hom_k(\Lambda,k),k)\otimes_k-
\]
Finally, since $\Lambda$ is finitely generated projective over $k$, the natural map $\Lambda \to \Hom_k(\Hom_k(\Lambda,k),k)$ is an isomorphism, and hence we have 
\[
\Hom_k(\Hom_k(\Lambda,k),k)\otimes_k-\cong \Lambda\otimes_k - = f_!.
\]
Composing these maps, we get an isomorphism $f_!\cong \nu^-\circ \nu\circ f_!$ which is given by the unit of the adjunction $\nu\dashv \nu^-$.

  If we assume $k$ is coherent, then $f_!\dashv f^*$ restrict to an adjoint pair with Nakayama functor $\nu$ between the category of finitely presented modules $\Lambda\text{-}\md$ and $k\text{-}\md$. Note that if $k$ is a field, then we just obtain the classical Nakayama functor.
\end{Example}

\begin{Example}\label{Two algebras}
Let $k$ be a commutative ring, let $\Lambda_1$ be a $k$-algebra which is finitely generated and projective as a $k$-module, and let $\Lambda_2$ be a $k$-algebra. Furthermore, let
\begin{align*}
& f^*:=\res^{\Lambda_1\otimes_k \Lambda_2}_{\Lambda_2}\colon \Lambda_1\otimes_k \Lambda_2\text{-}\Md\to \Lambda_2\text{-}\Md \\
& f_!:=\Lambda_1\otimes_k -\colon \Lambda_2\text{-}\Md \to \Lambda_1\otimes_k \Lambda_2\text{-}\Md
\end{align*}   
Then the functor 
\[
\nu=\Hom_k(\Lambda_1,k)\otimes_{\Lambda_1}-\colon \Lambda_1\otimes_k \Lambda_2\text{-}\Md\to \Lambda_1\otimes_k \Lambda_2\text{-}\Md
\]
is a Nakayama functor for the adjoint pair $f_!\dashv f^*$. If we furthermore assume $\Lambda_2$ is left coherent, then $f_!\dashv f^*$ restrict to an adjoint pair with Nakayama functor $\nu$ between the category of finitely presented modules $\Lambda_1\otimes_k \Lambda_2\text{-}\md$ and $\Lambda_2\text{-}\md$. 
\end{Example}

\begin{Example}\label{Functor category example}
Let $k$ be a commutative ring, and let $\cC$ be a small, $k$-linear, Hom-finite and locally bounded category, see Definition \ref{Definition:12,5}. Then the evaluation functor 
\[
i^*\colon \cC\text{-}\Md\to \prod_{c\in \cC}k\text{-}\Md \quad \quad i^*(F)= (F(c))_{c\in \cC} \\
\] 
has a left adjoint $i_!$ given by 
\[
i_!\colon \prod_{c\in \cC}k\text{-}\Md \to \cC\text{-}\Md \quad \quad i_!(M_c)_{c\in \cC} = \bigoplus_{c\in \cC}\cC(c,-)\otimes_k M_c 
\]
Furthermore, we have a functor
\[
\nu= \Hom_k(\cC,k)\otimes_{\cC}-\colon \cC\text{-}\Md\to\cC\text{-}\Md  
\]
given by $\nu(M)(c)=\Hom_k(\cC(c,-),k)\otimes_{\cC}M$, and it turns out that this is a Nakayama functor relative to $i_!\dashv i^*$. See Subsection \ref{Comonad with Nakayama functor on} for more details. 
\end{Example}

\begin{Example}\label{Complexes}
Let $k$ be a commutative ring, and let $\cC$ be the $k$-linear category generated by the quiver 
\[
\cdots \xleftarrow{d_{i-1}} c_{i-1}\xleftarrow{d_i} c_{i}\xleftarrow{d_{i+1}} \cdots
\]
with vertex set $\{c_i|i\in \bZ/n \bZ\}$ and relations $d_i\circ d_{i+1}=0$. The category $\cB^{\cC}$ of $k$-linear functors from $\cC$ to $\cB$ can be identified with $n$-periodic complexes $B_{\bullet}= \cdots \xleftarrow{d_{i-1}} B_{i-1}\xleftarrow{d_i} B_{i}\xleftarrow{d_{i+1}} \cdots$ over $\cB$ (for $n=0$ this is just unbounded complexes over $\cB$). Let $[1]\colon \cB^{\cC}\to \cB^{\cC}$ be the shift functor defined by $(B_{\bullet}[1])_i=B_{i+1}$. The restriction functor 
\[
i^*\colon \cB^{\cC}\to \prod_{i\in \bZ/n \bZ}\cB \quad \quad i^*(B_{\bullet})=(B_i)_{i\in \bZ/n \bZ}
\]
has a left adjoint
\[
i_!\colon \prod_{i\in \bZ/n \bZ}\cB\to \cB^{\cC} \quad \quad i_!(B_i)_{i\in \bZ/n \bZ}=\bigoplus_{i\in \bZ/n \bZ} C(B_i)[-i]
\]
and a right adjoint
\[
i_*\colon \prod_{i\in \bZ/n \bZ}\cB\to \cB^{\cC} \quad \quad i_*(B_i)_{i\in \bZ/n \bZ}=\bigoplus_{i\in \bZ/n \bZ} C(B_i)[-i-1]
\]
where $C(B_i)$ is the complex with $C(B_i)_0=C(B_i)_{-1}=B_i$, the differential $C(B_i)_0\to C(B_i)_{-1}$ is the identity, and $C(B_i)_k=0$ for $k\neq 0,-1$. It follows that $[-1]$ is a Nakayama functor relative to $i_!\dashv i^*$.
\end{Example}

\begin{Example}\label{Morphism category}
Let $\bA_2=(1 \to 2)$ be the category with two objects and one morphism between them, and let $\cC=k\bA_2$ be the $k$-linearization of $\bA_2$. An object in $\cB^{k\bA_2}$ is then just a morphism $B_1\xrightarrow{f}B_2$ in $\cB$. The restriction functor
\[
i^*\colon \cB^{k\bA_2}\to \cB \prod \cB \quad \quad i^*(B_1\xrightarrow{f}B_2)=(B_1,B_2)
\]
has a left adjoint given by
\[
i_!\colon  \cB \prod \cB\to \cB^{k\bA_2} \quad \quad i_!(B_1,B_2)=(B_1\xrightarrow{1}B_1) \oplus (0\to B_2).
\]
The cokernel functor $\nu(B_1\xrightarrow{f}B_2)= B_2\to \Coker f$ is then a Nakayama functor relative to $i_!\dashv i^*$, and its right adjoint $\nu^-$ is given by $\nu^-(B_1\xrightarrow{f}B_2)= \Ker f\to B_1$. 

More generally, for $\bA_n=(1\to 2\to \cdots \to n)$ the restriction functor
\[
i^*\colon \cB^{k\bA_n}\to \prod_{1\leq i\leq n} \cB \quad \quad i^*(B_1\xrightarrow{f_1}B_2\xrightarrow{f_2}\cdots \xrightarrow{f_{n-1}}B_n)=(B_1,B_2,\cdots ,B_n)
\]
has a left adjoint $i_!\colon  \prod_{1\leq i\leq n} \cB\to \cB^{k\bA_n}$ given by
\begin{multline*} 
i_!(B_1,B_2,\cdots ,B_n)=\bigoplus_{1\leq i\leq n}(0\to \cdots \to 0\to B_i\xrightarrow{1}B_i\xrightarrow{1}\cdots \xrightarrow{1}B_i)
\end{multline*}
where the first $n+1-i$ terms of $(0\to \cdots \to 0\to B_i\xrightarrow{1}B_i\xrightarrow{1}\cdots \xrightarrow{1}B_i)$ are $0$. The functor
\begin{multline*}
\nu(B_1\xrightarrow{f_1}B_2\xrightarrow{f_2}\cdots \xrightarrow{f_{n-1}}B_n)= \\ 
(B_n\to \Coker f_{n-1}\to \Coker (f_{n-1} f_{n-2})\to \cdots \to \Coker (f_{n-1} f_{n-2} \cdots f_1))
\end{multline*}
is then a Nakayama functor relative to $i_!\dashv i^*$, and its right adjoint is given by
\begin{multline*}
\nu^-(B_1\xrightarrow{f_1}B_2\xrightarrow{f_2}\cdots \xrightarrow{f_{n-1}}B_n)= \\ 
(\Ker (f_{n-1} f_{n-2} \cdots f_1) \to \cdots \to \Ker (f_2f_1)\to \Ker f_1\to B_1) 
\end{multline*}
\end{Example}

\subsection{Properties of Nakayama functors}\label{Uniqueness of Nakayama functor section}
Fix an abelian category $\cA$ and an additive category $\cD$.  Our main goal in this subsection is to prove Theorem \ref{Admissible adjunction theorem introduction} which was stated in the introduction.

If $\nu$ is a Nakayama functor relative to $f_!\dashv f^*$, then the restriction 
\[
E:=\nu|_{\im f_1}\colon \im f_!\to \im f_*
\] 
is an isomorphism of categories satisfying $E\circ f_!=f_*$, where $f_*=\nu\circ f_!$. We want to show that such an isomorphism is enough to have a Nakayama functor. To this end, we assume we have adjunctions $f_!\dashv f^*\dashv f_*$. We define functors 
\begin{align*}
& \mor_{f_!}\colon \cA \to \Mor(\im f_!) \quad A\to \mor_{f_!}(A)\\
& \mor_{f_*}\colon \cA \to \Mor(\im f_*) \quad A\to \mor_{f_*}(A)
\end{align*}
where $\Mor(\im f_!)$ and $\Mor(\im f_*)$ are the categories of morphisms in $\im f_!$ and $\im f_*$, respectively. Explicitly, $\mor_{f_!}$ and $\mor_{f_*}$ are defined by 
\begin{align*}
& \mor_{f_!}(A):= i\circ \counit{f_!}{f^*}_{\Ker \counit{f_!}{f^*}_{A}}\colon f_!f^*(\Ker \counit{f_!}{f^*}_{A})\to f_!f^*(A) \\
& \mor_{f_*}(A):= \unit{f^*}{f_*}_{\Coker \unit{f^*}{f_*}_{A}}\circ p\colon f_*f^*(A)\to f_*f^*(\Coker \unit{f^*}{f_*}_{A})
\end{align*}
for an object $A\in \cA$, where $i\colon \Ker \counit{f_!}{f^*}_{A} \to f_!f^*(A)$ is the inclusion and $p\colon f_*f^*(A) \to \Coker \unit{f^*}{f_*}_{A}$ is the projection. 

\begin{Proposition}\label{Extending Nakayama functor Adjoint pair}
Assume we have adjunctions $f_!\dashv f^*\dashv f_*$ where \\ $f^*\colon \cA\to \cD$ is a faithful functor.  Furthermore, let $E\colon \im f_!\to \im f_*$ be an isomorphism satisfying $E\circ f_!=f_*$. The following holds:
\begin{enumerate}
\item There exists a Nakayama functor $\nu$ relative to $f_!\dashv f^*$ which satisfies $\nu|_{\im f_!}=E$;
\item If $\nu'$ is any other right exact functor satisfying $\nu'|_{\im f_!}=E$, then there exists a unique isomorphism $\zeta\colon \nu\to \nu'$ satisfying $\zeta_{f_!}=1_{f_*}$.
\end{enumerate} 
\end{Proposition}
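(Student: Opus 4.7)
The idea is to extend $E$ along the canonical $f_!$-presentation of each object to define $\nu$, extend $E^{-1}$ along the $f_*$-copresentation to define $\nu^-$, and assemble the adjunction $\nu\dashv\nu^-$ by transporting Hom groups through $E$. By Lemma \ref{Basic properties of Nakayama functor adjoint pairs}, faithfulness of $f^*$ gives that $\counit{f_!}{f^*}$ is epi and $\unit{f^*}{f_*}$ is mono, so for each $A,B\in\cA$ we have exact sequences
\begin{gather*}
f_!f^*(K_A)\xrightarrow{\mor_{f_!}(A)} f_!f^*(A)\xrightarrow{\counit{f_!}{f^*}_A} A \to 0,\\
0\to B\xrightarrow{\unit{f^*}{f_*}_B} f_*f^*(B)\xrightarrow{\mor_{f_*}(B)} f_*f^*(C_B),
\end{gather*}
where $K_A=\Ker\counit{f_!}{f^*}_A$ and $C_B=\Coker\unit{f^*}{f_*}_B$. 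Set
\[
\nu(A):=\Coker E(\mor_{f_!}(A)),\qquad \nu^-(B):=\Ker E^{-1}(\mor_{f_*}(B)).
\]
Functoriality of $\mor_{f_!}$, $\mor_{f_*}$, $E$ and $E^{-1}$ makes these into functors $\cA\to\cA$, and from $E\circ f_!=f_*$ (and symmetrically $E^{-1}\circ f_*=f_!$) one reads off $\nu\circ f_!=f_*$, $\nu^-\circ f_*=f_!$, and $\nu|_{\im f_!}=E$.

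\textbf{Adjunction.} To build a natural isomorphism $\cA(\nu(A),B)\cong\cA(A,\nu^-(B))$, note first that the mono $\unit{f^*}{f_*}_B$ identifies $\cA(f_*f^*(A),B)$ with the subgroup of $\phi\in\cA(f_*f^*(A),f_*f^*(B))$ satisfying $\mor_{f_*}(B)\circ\phi=0$. Applying the bijection $E^{-1}\colon\cA(f_*X,f_*Y)\xrightarrow{\sim}\cA(f_!X,f_!Y)$ induced by the isomorphism of categories $E$, and using its functoriality, this identifies further with the subgroup of $\cA(f_!f^*(A),f_!f^*(B))$ annihilated by postcomposition with $E^{-1}(\mor_{f_*}(B))$, i.e.\ with $\cA(f_!f^*(A),\nu^-(B))$. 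Performing the same identification with $A$ replaced by $K_A$, and checking that the map induced by precomposition with $E(\mor_{f_!}(A))$ on the top corresponds (via $E^{-1}$-functoriality) to precomposition with $\mor_{f_!}(A)$ on the bottom, produces a commutative square whose horizontal kernels are $\cA(\nu(A),B)$ on top and $\cA(A,\nu^-(B))$ on bottom; naturality in both arguments is bookkeeping.

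\textbf{Nakayama axioms and uniqueness.} Axiom (1) is exactly the given $f^*\dashv f_*$ applied to $\nu\circ f_!=f_*$. For axiom (2), evaluate the adjunction bijection at $(f_!(X),f_*(X))$: by construction it is precisely $E^{-1}$ on $\cA(f_*(X),f_*(X))$, so it sends $1_{f_*(X)}$ to $1_{f_!(X)}$; hence $\lambda_{f_!(X)}=1_{f_!(X)}$, in particular an isomorphism. For part (ii), if $\nu'$ is right exact with $\nu'|_{\im f_!}=E$, applying $\nu'$ to the defining $f_!$-presentation of $A$ gives $\nu'(A)\cong\Coker E(\mor_{f_!}(A))=\nu(A)$, and the induced isomorphism $\zeta_A\colon\nu(A)\xrightarrow{\sim}\nu'(A)$ comes from the identity on $f_*f^*(A)$, so automatically $\zeta_{f_!}=1_{f_*}$; any other such $\zeta$ agrees with this one on $\im f_!$ and is therefore determined by right exactness.

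\textbf{Main obstacle.} The technical heart is the adjunction in the second paragraph: one must check that the mono-based identification of $\cA(f_*f^*(A),B)$ inside $\cA(f_*f^*(A),f_*f^*(B))$, composed with the $E^{-1}$-transfer, fits together naturally in both variables and is compatible with the connecting maps from $\mor_{f_!}(A)$ and $\mor_{f_*}(B)$. Once this is in place, the Nakayama axioms and the uniqueness statement are essentially formal consequences of the construction.
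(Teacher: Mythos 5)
Your proof is correct and takes essentially the same approach as the paper's: define $\nu:=\Coker\circ E\circ\mor_{f_!}$ and $\nu^-:=\Ker\circ E^{-1}\circ\mor_{f_*}$, note the choices can be made so that $\nu|_{\im f_!}=E$ and $\nu^-|_{\im f_*}=E^{-1}$, build the adjunction by transporting $\Hom$ groups through $E$ and $E^{-1}$ along the $f_!$-presentation of the source and the $f_*$-copresentation of the target, and then observe that the resulting unit is the identity on $\im f_!$. The only cosmetic difference is that the paper produces the bijection element-by-element (the map $h\mapsto\theta(h)$) while you describe it as the induced map on kernels of a commutative square; these are the same construction, and your uniqueness argument in part (ii) matches (and spells out slightly more than) what the paper calls ``obvious.''
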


\begin{proof}
Let $E\colon \im f_!\to \im f_*$ be an isomorphism satisfying $E\circ f_!=f_*$, and let $E^{-}$ be its inverse. These lifts to isomorphisms $E\colon \Mor(\im f_!) \to \Mor(\im f_*)$ and $E^{-}\colon \Mor(\im f_*) \to \Mor(\im f_!)$ defined pointwise. Note that the sequences
\begin{align*}
 & f_*f^*(\Ker \counit{f_!}{f^*}_{f_!(D)})\xrightarrow{E(\mor_{f_!}(f_!(D)))}f_*f^*f_!(D)\xrightarrow{E(\counit{f_!}{f^*}_{f_!(D)})} f_*(D)\to 0 \\
&  0\to f_!(D) \xrightarrow{E^-(\unit{f^*}{f_*}_{f_*(D)})} f_!f^*f_*(D)\xrightarrow{E^-(\mor_{f_*}(f_*(D)))} f_!f^*(\Coker \unit{f^*}{f_*}_{f_*(D)})
\end{align*}
are exact in $\cA$. Hence, we can define functors
\begin{align*}
& \nu:= \Coker \circ E\circ \mor_{f_!}\colon \cA\to \cA \\
& \nu^-:=\Ker \circ E^-\circ \mor_{f_*}\colon \cA\to \cA
\end{align*}
such that $\nu|_{\im f_!} =E$ and $\nu^-|_{\im f_*} =E^-$, where $\Ker\colon \Mor(\im f_!)\to \cA$ and $\Coker\colon \Mor(\im f_*)\to \cA$ are the kernel and cokernel functors. We claim that $\nu \dashv \nu^-$. Let $h\colon \nu (A_1)\to A_2$ be a morphism in $\cA$. By definition, we have an exact sequence 
\[
f_*f^*(\Ker \counit{f_!}{f^*}_{A_1})\xrightarrow{E(\mor_{f_!}(A_1))} f_*f^*(A_1) \to \nu(A_1)\to 0
\] 
Consider the composite
\[
\overline{h}:=f_*f^*(A_1)\xrightarrow{}\nu(A_1)\xrightarrow{h}A_2\xrightarrow{\unit{f^*}{f_*}_{A_2}}f_*f^*(A_2)
\]
It satisfies $\overline{h}\circ E(\mor_{f_!}(A_1))=0$ and $\mor_{f_*}(A_2)\circ \overline{h}=0$. Applying $E^-$ to $\overline{h}$ therefore gives a morphism $E^-(\overline{h})\colon f_!f^*(A_1)\to f_!f^*(A_2)$ satisfying $E^-(\overline{h}) \circ \mor_{f_!}(A_1)=0$ and $E^-(\mor_{f_*}(A_2))\circ E^-(\overline{h})=0$. Since we have exact sequences 
\[
f_!f^*(\Ker \counit{f_!}{f^*}_{A_1})\xrightarrow{\mor_{f_!}(A_1)} f_!f^*(A_1)\xrightarrow{\counit{f_!}{f^*}_{A_1}} A_1\to 0
\]
and
\[
0\to \nu^-(A_2)\xrightarrow{}f_!f^*(A_2)\xrightarrow{E^-(\mor_{f_*}(A_2))} f_!f^*(\Coker \unit{f^*}{f_*}_{A_2})
\]
it follows that the morphism $E^-(\overline{h})$ induces a morphism $\theta (h)\colon A_1\to \nu^-(A_2)$. Obviously, the map $h\to \theta (h)$ is bijective and natural, and therefore we have an adjunction $\nu \dashv \nu^-$. Furthermore, under this bijection the unit  $ 1_{\cA}\to \nu^-\circ \nu$ is the identity on $\im f_!$. Hence, $\nu$ is a Nakayama functor which satisfies $\nu|_{\im f_!}=E$. The uniqueness of $\nu$ is obvious.
\end{proof}

We get the following corollary from the proof of Proposition \ref{Extending Nakayama functor Adjoint pair}, which is useful for computing the Nakayama functor and its adjoint in examples.

\begin{Corollary}\label{Computing the Nakayama functor}
Assume we have adjunctions $f_!\dashv f^*\dashv f_*$ where $f^*\colon \cA\to \cD$ is a faithful functor. The following holds: 
\begin{enumerate}
\item If $\nu\colon \cA\to \cA$ is a right exact functor such that $\nu\circ f_!=f_*$ and $\nu|_{\im f_!}\colon \im f_!\to \im f_*$ is an isomorphism, then $\nu$ is a Nakayama functor relative to $f_!\dashv f^*$;
\item If $\nu^-\colon \cA\to \cA$ is a left exact functor such that $\nu^-\circ f_*=f_!$ and $\nu^-|_{\im f_*}\colon \im f_*\to \im f_!$ is an inverse to $\nu|_{\im f_!}$, then $\nu\dashv \nu^-$;
\item The unit and counit of $\nu\dashv \nu^-$ are the unique natural transformations $\lambda \colon 1_{\cA}\to \nu^-\circ \nu$ and $\sigma\colon \nu\circ \nu^-\to 1_{\cA}$ which satisfies $\lambda_{f_!}=1_{f_!}$ and $\sigma_{f_*}=1_{f_*}$.
\end{enumerate}
\end{Corollary}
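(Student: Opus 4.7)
The plan is to reduce each of the three assertions to the construction and uniqueness statements inside Proposition \ref{Extending Nakayama functor Adjoint pair}, combined with the fact that, since $f^*$ is faithful, the counit $\counit{f_!}{f^*}_A\colon f_!f^*(A)\to A$ is an epimorphism and the unit $\unit{f^*}{f_*}_A\colon A\to f_*f^*(A)$ is a monomorphism for every $A\in\cA$.

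For (i), I would set $E:=\nu|_{\im f_!}$; by hypothesis $E$ is an isomorphism satisfying $E\circ f_!=f_*$. Proposition \ref{Extending Nakayama functor Adjoint pair}(i) then supplies a Nakayama functor $\nu_0$ with $\nu_0|_{\im f_!}=E$, and $\nu_0$ is right exact since it is a left adjoint. Since $\nu$ is right exact with the same restriction to $\im f_!$, the uniqueness statement in Proposition \ref{Extending Nakayama functor Adjoint pair}(ii) yields a unique natural isomorphism $\zeta\colon \nu_0\xrightarrow{\cong}\nu$ with $\zeta_{f_!}=1_{f_*}$. Transporting the right adjoint and adjunction data of $\nu_0$ along $\zeta$ exhibits $\nu$ itself as a Nakayama functor relative to $f_!\dashv f^*$. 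Alternatively, right exactness of $\nu$ applied to the presentation $f_!f^*(\Ker\counit{f_!}{f^*}_A)\xrightarrow{\mor_{f_!}(A)} f_!f^*(A)\to A\to 0$ forces $\nu(A)$ to be the cokernel of $E(\mor_{f_!}(A))$, which is precisely the formula defining $\nu_0(A)$ in the proof of Proposition \ref{Extending Nakayama functor Adjoint pair}.

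For (ii), the same proof produces the right adjoint of $\nu_0$ as the left exact functor $\nu_0^-:=\Ker\circ E^{-}\circ \mor_{f_*}$, whose restriction to $\im f_*$ is $E^{-1}$. Running Proposition \ref{Extending Nakayama functor Adjoint pair} in $\cA\op$ (where $f_!\dashv f^*\dashv f_*$ becomes $f_*\dashv f^*\dashv f_!$) gives the dual uniqueness statement: any left exact functor agreeing with $\nu_0^-$ on $\im f_*$ is uniquely isomorphic to it via an isomorphism identity on $\im f_*$. The hypothesis on $\nu^-$ exhibits such an isomorphism $\nu^-\cong \nu_0^-$. Transferring the adjunction $\nu_0\dashv \nu_0^-$ along the isomorphisms $\nu\cong \nu_0$ and $\nu^-\cong \nu_0^-$ produces $\nu\dashv \nu^-$.

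For (iii), suppose $\lambda,\lambda'\colon 1_{\cA}\to \nu^-\circ\nu$ both satisfy $\lambda_{f_!}=\lambda'_{f_!}=1_{f_!}$. Naturality of $\lambda$ along $\counit{f_!}{f^*}_A$ gives
\[
\lambda_A\circ \counit{f_!}{f^*}_A \;=\; \nu^-\nu(\counit{f_!}{f^*}_A)\circ \lambda_{f_!f^*(A)} \;=\; \nu^-\nu(\counit{f_!}{f^*}_A),
\]
and the same identity holds for $\lambda'_A$. Cancelling the epimorphism $\counit{f_!}{f^*}_A$ forces $\lambda_A=\lambda'_A$, and the uniqueness of $\sigma$ is dual via the monomorphism $\unit{f^*}{f_*}_A$. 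The main obstacle throughout is verifying that the unit and counit of the adjunction $\nu_0\dashv\nu_0^-$ constructed in Proposition \ref{Extending Nakayama functor Adjoint pair} genuinely restrict to the identity on $\im f_!$ and $\im f_*$, so that the transfers in (i) and (ii) are unambiguous; this is however visible from the explicit formulas in that proof, where the adjunction bijection is set up so that the unit becomes the identity on $\im f_!$.
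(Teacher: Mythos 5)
Your proof is correct and matches the paper's (implicit) approach: the paper simply records this Corollary as a byproduct of the proof of Proposition~\ref{Extending Nakayama functor Adjoint pair}, and your argument makes that reduction explicit. A few points worth noting. The transport step in parts~(i) and~(ii) is the crux: if $\zeta\colon \nu_0\to\nu$ and $\zeta'\colon\nu_0^-\to\nu^-$ are the natural isomorphisms with $\zeta_{f_!}=1_{f_*}$ and $\zeta'_{f_*}=1_{f_!}$, then the unit of the transferred adjunction is
\[
\lambda_A \;=\; \zeta'_{\nu(A)}\circ\nu_0^-(\zeta_A)\circ(\lambda_0)_A,
\]
and evaluating at $A=f_!(D)$ all three factors become identities, so $\lambda_{f_!}=1_{f_!}$; this is the computation you flag as ``the main obstacle'' but do not carry out. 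Once you have $\lambda_{f_!}=1_{f_!}$, the property $\sigma_{f_*}=1_{f_*}$ follows for free from the triangle identity $\sigma_{f_*}\circ\nu(\lambda_{f_!})=1_{f_*}$, which saves you the dual verification. Your uniqueness argument in~(iii) via naturality against the epimorphism $\counit{f_!}{f^*}_A$ is correct and is in fact the most self-contained part of the write-up. Finally, the alternative cokernel argument you sketch at the end of~(i) is in some ways cleaner: right exactness of $\nu$ applied to the presentation $f_!f^*(\Ker \counit{f_!}{f^*}_A)\to f_!f^*(A)\to A\to 0$ directly identifies $\nu$ with $\Coker\circ E\circ\mor_{f_!}$ naturally in $A$, avoiding the invocation of uniqueness; the paper's own Proposition proof does essentially this.
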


Combining Proposition \ref{Extending Nakayama functor Adjoint pair} with Theorem \ref{correspondence morphism of monads and morphisms of the images} gives the following.

\begin{Theorem}\label{Isomorphism of monads adjoint pairs}
Let $f^*\colon \cA\to \cD$ be a faithful functor, and assume there exists adjunctions $f_!\dashv f^*\dashv f_*\dashv f^{\times}$.  
\begin{enumerate}
\item\label{Isomorphism of monads adjoint pairs:1} If $\nu\colon \cA\to \cA$ is a Nakayama functor relative to $f_!\dashv f^*$ satisfying $\nu\circ f_!=f_*$, then $\gamma_{\nu}:=\chi_{f_*}\circ f^*(\lambda_{f_!})$ induces an isomorphism of monads
\[
\gamma_{\nu}\colon (f^*f_!,\unit{f_!}{f^*},f^*(\counit{f_!}{f^*}_{f_!}))\xrightarrow{\cong} (f^{\times}f_*,\unit{f_*}{f^{\times}},f^{\times}(\counit{f_*}{f^{\times}}_{f_*}))
\]
where $\chi\colon f^*\circ \nu^-\xrightarrow{\cong}f^{\times}$ is the natural transformation which is conjugate to the identity $f_*=\nu\circ f_!$; 
\item\label{Isomorphism of monads adjoint pairs:2} If $\gamma\colon (f^*f_!,\unit{f_!}{f^*},f^*(\counit{f_!}{f^*}_{f_!}))\xrightarrow{\cong} (f^{\times}f_*,\unit{f_*}{f^{\times}},f^{\times}(\counit{f_*}{f^{\times}}_{f_*}))$ is an isomorphism of monads, then there exists a Nakayama functor \\ $\nu\colon \cA\to \cA$ relative to $f_!\dashv f^*$ satisfying $\nu\circ f_!=f_*$ and $\gamma_{\nu}=\gamma$. Furthermore, if $\nu'$ is another Nakayama functor relative to $f_!\dashv f^*$ satisfying $\nu'\circ f_!=f_*$ and $\gamma_{\nu'}=\gamma$, then there exists a unique isomorphism $\zeta\colon \nu\to \nu'$ with $\zeta_{f_!}=1_{f_*}$.
\end{enumerate}
\end{Theorem}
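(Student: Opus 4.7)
The plan is to combine Proposition \ref{Extending Nakayama functor Adjoint pair} with Theorem \ref{correspondence morphism of monads and morphisms of the images}. The former identifies Nakayama functors $\nu$ with $\nu\circ f_!=f_*$ (up to canonical isomorphism) with isomorphisms of categories $E\colon \im f_!\to \im f_*$ satisfying $E\circ f_!=f_*$; the latter identifies \emph{all} such functors $E$ with morphisms of monads $f^*f_!\to f^{\times}f_*$. Composing these bijections, Nakayama functors correspond precisely to \emph{isomorphisms} of monads, and all that remains is to verify that under this correspondence a Nakayama functor $\nu$ is sent to the specific natural transformation $\gamma_\nu=\chi_{f_*}\circ f^*(\lambda_{f_!})$ described in the statement.

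For part \ref{Isomorphism of monads adjoint pairs:1}, first set $E:=\nu|_{\im f_!}\colon \im f_!\to \im f_*$; this is an isomorphism of categories by Lemma \ref{Basic properties of Nakayama functor adjoint pairs}\ref{Isomorphism Nakayama functor}. By Theorem \ref{correspondence morphism of monads and morphisms of the images}\ref{correspondence morphism of monads and morphisms of the images:2} there is a unique morphism of monads $\tilde\gamma\colon f^*f_!\to f^{\times}f_*$ with $E_{\tilde\gamma}=E$, and because the inverse functor $E^{-1}$ satisfies $E^{-1}\circ f_*=f_!$ it produces an inverse monad morphism, making $\tilde\gamma$ an isomorphism of monads. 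It then remains to show $\tilde\gamma=\gamma_\nu$. Using the explicit formula \eqref{Formula for monad isomorphism} one has
\[
\tilde\gamma = f^{\times}\nu(\counit{f_!}{f^*}_{f_!})\circ \unit{f_*}{f^{\times}}_{f^*f_!},
\]
while the conjugation condition defining $\chi\colon f^*\nu^-\to f^{\times}$ applied to the identity $1_{\nu f_!}=1_{f_*}$ (using that the unit of the composite adjunction $\nu f_!\dashv f^*\nu^-$ is $f^*(\lambda_{f_!})\circ\unit{f_!}{f^*}$) gives
\[
\chi_E = f^*\nu^-(\counit{f_*}{f^{\times}}_E)\circ f^*\lambda_{f_!f^{\times}(E)}\circ \unit{f_!}{f^*}_{f^{\times}(E)}.
\]
Substituting $E=f_*(D)$, composing with $f^*(\lambda_{f_!(D)})$, and simplifying via naturality of $\lambda$ and the triangle identities for $f_*\dashv f^{\times}$ yields the same expression as $\tilde\gamma$ at $D$; this diagram chase is the key computation and is where the careful bookkeeping lies.

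For part \ref{Isomorphism of monads adjoint pairs:2}, given an isomorphism of monads $\gamma$, Theorem \ref{correspondence morphism of monads and morphisms of the images}\ref{correspondence morphism of monads and morphisms of the images:1} yields a functor $E_\gamma\colon \im f_!\to \im f_*$ with $E_\gamma\circ f_!=f_*$; the functor $E_{\gamma^{-1}}$ is then inverse to $E_\gamma$ (the identity monad morphism corresponds to the identity functor, so mutually inverse monad morphisms give mutually inverse functors), making $E_\gamma$ an isomorphism of categories. Proposition \ref{Extending Nakayama functor Adjoint pair}\textup{(i)} produces a Nakayama functor $\nu$ with $\nu|_{\im f_!}=E_\gamma$, in particular $\nu\circ f_!=f_*$. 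The construction in part \ref{Isomorphism of monads adjoint pairs:1} shows $E_{\gamma_\nu}=\nu|_{\im f_!}=E_\gamma$, and the uniqueness clause of Theorem \ref{correspondence morphism of monads and morphisms of the images}\ref{correspondence morphism of monads and morphisms of the images:2} forces $\gamma_\nu=\gamma$. Finally, if $\nu'$ is another Nakayama functor with $\nu'\circ f_!=f_*$ and $\gamma_{\nu'}=\gamma$, then $\nu'|_{\im f_!}=E_{\gamma_{\nu'}}=E_{\gamma_\nu}=\nu|_{\im f_!}$, so Proposition \ref{Extending Nakayama functor Adjoint pair}\textup{(ii)} delivers the required unique isomorphism $\zeta\colon \nu\to \nu'$ with $\zeta_{f_!}=1_{f_*}$.

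The main obstacle is the explicit identification of $\gamma_\nu$ with the abstract formula \eqref{Formula for monad isomorphism}. Everything else is a formal consequence of combining two already-proven bijective correspondences, but matching the two presentations of the same monad morphism requires the conjugation-identity calculation sketched above.
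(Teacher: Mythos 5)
Your overall strategy coincides with the paper's: both reduce the theorem to the combination of Proposition~\ref{Extending Nakayama functor Adjoint pair} with Theorem~\ref{correspondence morphism of monads and morphisms of the images}, and both isolate the verification that the abstract monad morphism $\tilde\gamma$ coming from \eqref{Formula for monad isomorphism} agrees with the explicit $\gamma_\nu=\chi_{f_*}\circ f^*(\lambda_{f_!})$ as the only nontrivial step. Part~\ref{Isomorphism of monads adjoint pairs:2} is handled in essentially the identical way.

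One thing to fix in the sketched computation: the displayed formula
\[
f^*\nu^-(\counit{f_*}{f^{\times}}_E)\circ f^*\lambda_{f_!f^{\times}(E)}\circ \unit{f_!}{f^*}_{f^{\times}(E)}
\]
is a morphism $f^{\times}(E)\to f^*\nu^-(E)$, i.e.\ it is the conjugate of the identity when one takes $L_1=\nu f_!$, $L_2=f_*$; this is $\chi^{-1}$, not $\chi\colon f^*\nu^-\to f^{\times}$. As written it cannot be precomposed with $f^*(\lambda_{f_!(D)})$, so the ``substituting and simplifying'' step does not literally go through with this expression. The paper avoids this issue by never computing $\chi$ pointwise: it instead writes $\adjiso{f_*}{f^{\times}}=(\chi\circ -)\circ\adjiso{f_!}{f^*}\circ\theta$ (the defining property of the conjugate), evaluates at $1_{f_*}$ to get $\unit{f_*}{f^{\times}}=\chi_{f_*}\circ f^*(\lambda_{f_!})\circ\unit{f_!}{f^*}$, and then substitutes this into $\tilde\gamma=f^{\times}\nu(\counit{f_!}{f^*}_{f_!})\circ\unit{f_*}{f^{\times}}_{f^*f_!}$, finishing with naturality of $\chi$ and $\lambda$ and the triangle identity for $f_!\dashv f^*$. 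Your route is salvageable (take the conjugate in the correct ordering of the two adjunctions, which replaces the counit $\counit{f_*}{f^{\times}}$ in your expression by the counit of $\nu f_!\dashv f^*\nu^-$), but the paper's computation is tighter and avoids the bookkeeping you flag as the main obstacle.
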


\begin{proof}
We only need to show that the natural transformation in \eqref{Formula for monad isomorphism} coincide with the one given for $\gamma_{\nu}$ above. The rest follows from Proposition \ref{Extending Nakayama functor Adjoint pair} and Theorem \ref{correspondence morphism of monads and morphisms of the images}. By \eqref{Formula for monad isomorphism} we have that
\[
\gamma_{\nu} := f^*f_!\xrightarrow{(\unit{f_*}{f^{\times}})_{f^*f_!}} f^{\times}f_*f^*f_!= f^{\times}\nu f_!f^*f_! \xrightarrow{f^{\times}\nu (\counit{f_!}{f^*})_{f_!}} f^{\times}\nu f_!=f^{\times}f_*
\]
Since 
\[
\adjiso{f_*}{f^{\times}}\colon \cA(\nu\circ f_!,-) \xrightarrow{\theta}\cA(f_!,\nu^-)\xrightarrow{\adjiso{f_!}{f^*}} \cD(-,f^*\circ \nu^- )\xrightarrow{\chi\circ -} \cD(-,f^{\times}).
\]
it follows that 
\begin{align*}
\unit{f_*}{f^{\times}} & = \adjiso{f_*}{f^{\times}}(1_{f_*})=\chi_{f_*} \circ \adjiso{f_!}{f^*}(\theta (1_{\nu\circ f_!})) = \chi_{f_*}\circ \adjiso{f_!}{f^*}(\lambda_{f_!}) \\
& = \chi_{f_*}\circ  f^*(\lambda_{f_!})\circ \unit{f_!}{f^*}
\end{align*}
Hence, it follows that 
\begin{align*}
\gamma_{\nu} & = f^{\times}\nu (\counit{f_!}{f^*})_{f_!}\circ (\unit{f_*}{f^{\times}})_{f^*f_!} \\
& = f^{\times}\nu (\counit{f_!}{f^*}_{f_!})\circ \chi_{f_*f^*f_!}\circ  f^*(\lambda_{f_!f^*f_!})\circ \unit{f_!}{f^*}_{f^*f_!} \\
& = \chi_{f_*}\circ f^*\nu^-\nu (\counit{f_!}{f^*}_{f_!})\circ f^*(\lambda_{f_!f^*f_!})\circ \unit{f_!}{f^*}_{f^*f_!} \\
& = \chi_{f_*}\circ f^*(\lambda_{f_!})\circ f^*(\counit{f_!}{f^*}_{f_!})\circ \unit{f_!}{f^*}_{f^*f_!} \\
& = \chi_{f_*}\circ f^*(\lambda_{f_!})
\end{align*}
by naturality and the triangle identities.
\end{proof}

We leave it to the reader to state the dual of Theorem \ref{Isomorphism of monads adjoint pairs}.

We have the following uniqueness result for Nakayama functors.

\begin{Theorem}\label{Uniqueness Nakayama functor adjoint pairs}
Assume we have adjunctions $f_!\dashv f^*\dashv f_*$ where $f^*\colon \cA\to \cD$ is a faithful functor. Let $\nu_1$ and $\nu_2$ be Nakayama functors relative to $f_!\dashv f^*$ satisfying $\nu_1\circ f_! = f_*= \nu_2\circ f_!$.  The following holds:
\begin{enumerate}
\item\label{Uniqueness Nakayama functor adjoint pairs:1} There exists an equivalence $\Phi\colon \cA\to \cA$ satisfying $\Phi\circ f_!=f_!$ and a unique natural isomorphism $\xi\colon \nu_1\circ \Phi \to \nu_2$ satisfying $\xi_{f_!}=1_{f_*}$. Furthermore, if $\Psi$ is another such equivalence, then there exists a unique natural isomorphism $\zeta\colon \Phi\to \Psi$ satisfying $\zeta_{f_!}=1_{f_!}$. 
\item\label{Uniqueness Nakayama functor adjoint pairs:2} There exists an equivalence $\Phi'\colon \cA\to \cA$ satisfying $\Phi'\circ f_*=f_*$ and a unique natural isomorphism $\xi'\colon \Phi'\circ \nu_1 \to \nu_2$ satisfying $\xi_{f_!}=1_{f_*}$. Furthermore, if $\Psi'$ is another such equivalence, then there exists a unique natural isomorphism $\zeta\colon \Phi'\to \Psi'$ satisfying $\zeta_{f_!}=1_{f_*}$. 
\end{enumerate} 
\end{Theorem}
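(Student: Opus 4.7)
I would prove part (i) in detail; part (ii) then follows by an entirely symmetric argument, using the dual kernel construction from the proof of Proposition \ref{Extending Nakayama functor Adjoint pair} with the cogenerating functor $f_*$ in place of the generating functor $f_!$. The starting observation is that, by Lemma \ref{Basic properties of Nakayama functor adjoint pairs}(iii), the restrictions $E_i := \nu_i|_{\im f_!}\colon \im f_!\to \im f_*$ are isomorphisms of categories with $E_i\circ f_! = f_*$; hence the composite $\tilde{E} := E_1^{-1}\circ E_2\colon \im f_!\to \im f_!$ is an automorphism satisfying $\tilde{E}\circ f_! = f_!$.

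Next, I would construct $\Phi$ by mimicking the cokernel formula from the proof of Proposition \ref{Extending Nakayama functor Adjoint pair}: set
\[
\Phi(A) := \Coker\bigl(\tilde{E}(\mor_{f_!}(A))\bigr),
\]
where $\tilde{E}$ acts pointwise on $\Mor(\im f_!)$. The same argument as in that proof shows that for $A = f_!(D)$ the modified sequence $f_!f^*(\Ker\counit{f_!}{f^*}_{f_!(D)})\to f_!f^*f_!(D)\to f_!(D)\to 0$ (with first map $\tilde{E}(\mor_{f_!}(f_!(D)))$) remains exact, using that $\tilde{E}$ is additive and $\counit{f_!}{f^*}_{f_!(D)}$ is a split epimorphism with canonical section $f_!(\unit{f_!}{f^*}_D)$ preserved strictly by $\tilde{E}$. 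Choosing cokernels accordingly, one can arrange $\Phi|_{\im f_!} = \tilde{E}$, and in particular $\Phi\circ f_! = f_!$. The main technical point will be to verify that $\Phi$ is right exact; this would follow from a $3\times 3$-lemma applied to the canonical presentations $f_!f^*(\Ker\counit{f_!}{f^*}_A)\to f_!f^*(A)\to A\to 0$ of the three terms of a short exact sequence in $\cA$, together with right exactness of $f_!f^*$ and naturality of $\tilde{E}\circ \mor_{f_!}$.

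Defining $\Psi\colon\cA\to\cA$ analogously from $\tilde{E}^{-1}$, both $\Psi\circ\Phi$ and $1_{\cA}$ are right exact endofunctors restricting to $1_{\im f_!}$ on $\im f_!$. The uniqueness argument from Proposition \ref{Extending Nakayama functor Adjoint pair}(ii) --- which uses only right exactness and the canonical presentation above --- then furnishes a unique natural isomorphism $\Psi\circ\Phi\cong 1_{\cA}$ identical on $\im f_!$; symmetrically $\Phi\circ\Psi\cong 1_{\cA}$, so $\Phi$ is an equivalence. The same extension-uniqueness principle applied to $\nu_1\circ\Phi$ and $\nu_2$, both right exact with common restriction $E_1\circ\tilde{E}=E_2$ on $\im f_!$, produces the required unique natural isomorphism $\xi\colon \nu_1\circ\Phi\to\nu_2$ satisfying $\xi_{f_!}=1_{f_*}$.

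For the uniqueness of $\Phi$ up to unique isomorphism, given any other such equivalence $\Psi'$ together with an isomorphism $\eta'\colon \nu_1\circ\Psi'\to\nu_2$ satisfying $\eta'_{f_!}=1_{f_*}$, the naturality of $\eta'$ on morphisms in $\im f_!$ combined with the invertibility of $E_1$ forces the strict equality $\Psi'|_{\im f_!}=\tilde{E}=\Phi|_{\im f_!}$, whereupon the same uniqueness principle produces the unique $\zeta\colon \Phi\to\Psi'$ with $\zeta_{f_!}=1_{f_!}$. I expect the most delicate step to be the verification of right exactness of $\Phi$ via the $3\times 3$-lemma; everything else is formal manipulation based on the extension-uniqueness built into Proposition \ref{Extending Nakayama functor Adjoint pair}.
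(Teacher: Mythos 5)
Your proposal follows essentially the same route as the paper's own proof: form the automorphism $\tilde E = E_1^{-1}\circ E_2$ of $\im f_!$, build $\Phi$ by the cokernel construction from Proposition \ref{Extending Nakayama functor Adjoint pair} with $\tilde E$ in place of $E$, and then deduce both the equivalence property of $\Phi$ and the existence and uniqueness of $\xi$ (and $\zeta$) from the fact that right exact functors agreeing on the generating subcategory $\im f_!$ are uniquely isomorphic. The paper constructs $\xi$ and $\zeta$ directly by a diagram chase on the canonical presentation rather than explicitly citing the extension--uniqueness of Proposition \ref{Extending Nakayama functor Adjoint pair}(ii), and simply asserts that $\Phi$ is an equivalence because $\im f_!$ is generating, whereas you make this step more explicit by constructing $\Psi$ from $\tilde E^{-1}$ and invoking uniqueness of right exact extensions to get $\Psi\circ\Phi\cong 1_\cA\cong\Phi\circ\Psi$; these are presentational variants of the same argument. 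One small caveat: the claim that $\tilde E$ "strictly preserves" the canonical section $f_!(\unit{f_!}{f^*}_D)$ is not needed and not obviously true in general (it would amount to $\nu_1(\counit{f_!}{f^*}_{f_!})=\nu_2(\counit{f_!}{f^*}_{f_!})$); what one actually uses, exactly as in the proof of Proposition \ref{Extending Nakayama functor Adjoint pair}, is only that the additive isomorphism $\tilde E$ carries the split presentation of $f_!(D)$ in $\im f_!$ to another split presentation with the same cokernel.
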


\begin{proof}
We only prove part $\ref{Uniqueness Nakayama functor adjoint pairs:1}$, part $\ref{Uniqueness Nakayama functor adjoint pairs:2}$ is proved similarly. Let $E_1:=\nu_1|_{\im f_!}\colon \im f_!\to \im f_*$ and $E_2:=\nu_2|_{\im f_!}\colon \im f_!\to \im f_*$, and let $H:=E_1^{-1}\circ E_2\colon \im f_!\to \im f_!$. Then obviously $H\circ f_!=f_!$. We define $\Phi:= \Coker \circ H\circ \mor_{f_!}$, where we choose the cokernels such that $\Phi|_{\im f_!} = H$. Since $\im f_!$ is generating, $\Phi$ is an equivalence on $\cA$. Furthermore, $\Phi\circ f_!=f_!$. Applying $\nu_1$ to the exact sequence 
\[
Hf_!f^*(\Ker \counit{f_!}{f^*}_{A})\to Hf_!f^*(A)\to \Phi(A)\to 0
\]
and using that $\nu_1|_{\im f_!}=E_1$ and $E_1H=E_2$, we get a commutative diagram with exact rows
\begin{equation*}
\begin{tikzpicture}[description/.style={fill=white,inner sep=2pt}]
\matrix(m) [matrix of math nodes,row sep=2.5em,column sep=2.5em,text height=1.5ex, text depth=0.25ex] 
{ E_1Hf_!f^*(\Ker \counit{f_!}{f^*}_{A}) & E_1Hf_!f^*(A) & \nu_1\Phi(A) & 0   \\
  E_2f_!f^*(\Ker \counit{f_!}{f^*}_{A}) & E_2f_!f^*(A)  & \nu_2(A) & 0  \\};
\path[->]
(m-1-1) edge node[auto] {$$} 	    												(m-1-2)
(m-1-2) edge node[auto] {$$} 	    												(m-1-3)
(m-1-3) edge node[auto] {$$} 	    												(m-1-4)
(m-2-1) edge node[auto] {$$} 	    												(m-2-2)
(m-2-2) edge node[auto] {$$} 	    												(m-2-3)
(m-2-3) edge node[auto] {$$} 	    												(m-2-4)

(m-1-1) edge node[auto] {$=$} 	    							   				    (m-2-1)
(m-1-2) edge node[auto] {$=$} 	    												(m-2-2)
(m-1-3) edge node[auto] {$\xi_{A}$} 	    											(m-2-3);	
\end{tikzpicture}
\end{equation*}
This defines $\xi_{A}$ for each $A\in \cA$. Obviously, $\xi$ satisfies $\xi_{f_!}=1_{f_*}$, and is the unique such natural isomorphism.

Now let $\Psi\colon \cA\to \cA$ be another such equivalence. Then the composite $\zeta'\colon \nu_1\circ \Phi \xrightarrow{\cong} \nu_2\xrightarrow{\cong} \nu_1 \circ \Psi$ gives a natural isomorphism satisfying $\zeta'_{f_!}=1_{f_*}$. Restricting to $\im f_!$ and using that $\nu_1|_{\im f_1}\colon \im f_!\to \im f_*$ is an isomorphism, we get that $\Phi|_{\im f_!}=\Psi|_{\im f_!}$. Hence, we have a commutative diagram 

\begin{equation*}
\begin{tikzpicture}[description/.style={fill=white,inner sep=2pt}]
\matrix(m) [matrix of math nodes,row sep=2.5em,column sep=5em,text height=1.5ex, text depth=0.25ex] 
{ f_!f^*(\Ker \counit{f_!}{f^*}_{A}) & f_!f^*(A) & \Phi(A) & 0   \\
  f_!f^*(\Ker \counit{f_!}{f^*}_{A}) & f_!f^*(A) & \Psi(A) & 0  \\};
\path[->]
(m-1-1) edge node[auto] {$\Phi(\mor_{f_!}(A))$} 	    							(m-1-2)
(m-1-2) edge node[auto] {$$} 	    												(m-1-3)
(m-1-3) edge node[auto] {$$} 	    												(m-1-4)
(m-2-1) edge node[auto] {$\Psi(\mor_{f_!}(A))$} 	    							(m-2-2)
(m-2-2) edge node[auto] {$$} 	    												(m-2-3)
(m-2-3) edge node[auto] {$$} 	    												(m-2-4)

(m-1-1) edge node[auto] {$=$} 	    							   				    (m-2-1)
(m-1-2) edge node[auto] {$=$} 	    												(m-2-2)
(m-1-3) edge node[auto] {$\zeta_{A}$} 	    											(m-2-3);	
\end{tikzpicture}
\end{equation*}
which defines $\zeta_{A}$ for each $A\in \cA$. Obviously $\zeta$ is the unique isomorphism satisfying $\zeta_{f_!}=1_{f_!}$. 
\end{proof}

Let $f^*\colon \cA\to \cB$ be a faithful functor between abelian categories with a left adjoint $f_!$, and assume $f_!\dashv f^*$ has a Nakayama functor as in Definition \ref{Nakayama functor for adjoint pair}. Then $f^*$ is faithful and exact. Let $\sfT=(f^*\circ f_!,\unit{f_!}{f^*},f^*(\counit{f_!}{f^*}_{f_!}))$ be the induced monad on $\cB$, and let $\cB^{\sfT}$ be the Eilenberg-Moore category of $\sfT$, see Definition \ref{Eilenberg-Moore category}. By Beck's monadicity theorem we have an equivalence of categories $J\colon\cA\xrightarrow{\cong} \cB^{\sfT}$ such that $U^{\sfT}\circ J$ is isomorphic to $f^*$, see Theorem \ref{Weak Beck's monadicity}. It is therefore natural to ask for necessary and sufficient conditions on a monad $\sfT$ on an abelian category $\cB$ so that the induced adjoint pair $F^{\sfT}\dashv U^{\sfT}$ on $\cB^{\sfT}$ has a Nakayama functor.

\begin{Theorem}\label{Admissible adjunction theorem}
Let $\sfT$ be a monad on an abelian category $\cB$, let $\cB^{\sfT}$ be the Eilenberg-Moore category of $\sfT$, and let $U^{\sfT}\colon \cB^{\sfT} \to \cB$ be the forgetful functor with left adjoint $F^{\sfT}\colon \cB\to \cB^{\sfT}$. Then the adjoint pair $F^{\sfT}\dashv U^{\sfT}$ has a Nakayama functor if and only if there exists a comonad $\sfS$ and an ambidextrous adjunction $\sfS\dashv \sfT\dashv \sfS$. 
\end{Theorem}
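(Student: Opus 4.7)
My plan is to invoke Theorem~\ref{Isomorphism of monads adjoint pairs}, which identifies Nakayama functors relative to $F^{\sfT}\dashv U^{\sfT}$ (extended to a chain $F^{\sfT}\dashv U^{\sfT}\dashv f_*\dashv f^{\times}$) with isomorphisms of monads $U^{\sfT}F^{\sfT}\xrightarrow{\cong}f^{\times}f_*$ on $\cB$. In either direction, the comonad $\sfS$ will be identified with $U^{\sfT}\circ f_*$ carrying the counit and comultiplication coming from the adjunction $U^{\sfT}\dashv f_*$.

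For the forward direction, assume $\nu$ exists. Lemma~\ref{Basic properties of Nakayama functor adjoint pairs} yields the adjoint chain $F^{\sfT}\dashv U^{\sfT}\dashv f_*\dashv f^{\times}$ with $f_*=\nu\circ F^{\sfT}$ and $f^{\times}=U^{\sfT}\circ \nu^-$; let $\sfS$ be the comonad on $\cB$ induced by $U^{\sfT}\dashv f_*$. Composing $F^{\sfT}\dashv U^{\sfT}$ with $U^{\sfT}\dashv f_*$ gives $T\dashv S$, and Proposition~\ref{Conjugate units and counits} applied to this pair shows that $\eta^{\sfT}$ is conjugate to $\epsilon^{\sfS}$ under $T\dashv S$; the conjugacy of $\mu^{\sfT}$ with $\Delta^{\sfS}$ then follows by whiskering the other conjugate pair $(\counit{F^{\sfT}}{U^{\sfT}},\, \unit{U^{\sfT}}{f_*})$ from the same proposition by $F^{\sfT}$ on the right and $U^{\sfT}$ on the left. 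This gives $\sfT\dashv \sfS$. To obtain $\sfS\dashv \sfT$, compose $U^{\sfT}\dashv f_*$ with $f_*\dashv f^{\times}$ to get $S\dashv f^{\times}f_*$; the second Nakayama axiom says $\lambda_{F^{\sfT}}\colon F^{\sfT}\xrightarrow{\cong}\nu^-\nu F^{\sfT}$, so $U^{\sfT}(\lambda_{F^{\sfT}})\colon T\xrightarrow{\cong}f^{\times}f_*$, and transporting $S\dashv f^{\times}f_*$ along this isomorphism produces $S\dashv T$. The same whiskering argument with Proposition~\ref{Conjugate units and counits} applied to $(U^{\sfT}\dashv f_*,\, f_*\dashv f^{\times})$ supplies the remaining conjugacies.

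For the converse, assume $\sfS\dashv \sfT\dashv \sfS$. The adjunction $\sfT\dashv \sfS$ is exactly the data needed for the classical comparison between $\sfT$-modules and $\sfS$-comodules: it provides an equivalence $\Phi\colon \cB^{\sfT}\xrightarrow{\cong}\cB^{\sfS}$ with $U^{\sfS}\Phi=U^{\sfT}$. Setting $f_*:=\Phi^{-1}\circ F^{\sfS}$ yields $U^{\sfT}\dashv f_*$ whose induced comonad on $\cB$ is $\sfS$. The other half $\sfS\dashv \sfT$ forces $S$ to be a left adjoint, hence colimit-preserving, and together with the ambidextrous compatibility of the two comonad structures on $S$ it allows the construction of a right adjoint $f^{\times}$ to $f_*$ and, more importantly, an isomorphism of monads $\gamma\colon (U^{\sfT}F^{\sfT},\eta^{\sfT},\mu^{\sfT})\xrightarrow{\cong}(f^{\times}f_*,\ldots)$ on $\cB$. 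Theorem~\ref{Isomorphism of monads adjoint pairs}(ii) then produces the Nakayama functor.

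The main obstacle is the converse: constructing $f^{\times}$ and verifying that $\gamma$ is a morphism of monads (and not merely of functors). This is where the hypothesis that a \emph{single} comonad $\sfS$ simultaneously witnesses both $\sfS\dashv \sfT$ and $\sfT\dashv \sfS$ becomes essential, since, as the introduction notes, the two comonad structures on $S$ induced separately by the two functor-level adjunctions could a priori disagree; their agreement is exactly what matches the monad structures on $T$ and $f^{\times}f_*$ needed to invoke Theorem~\ref{Isomorphism of monads adjoint pairs}(ii).
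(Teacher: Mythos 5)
Your overall strategy matches the paper's: reduce the existence of a Nakayama functor to a monad (or comonad) isomorphism via Theorem~\ref{Isomorphism of monads adjoint pairs}, build the adjoint chain, and use the Lau--Eilenberg--Moore conjugacy results (Proposition~\ref{Conjugate units and counits}, Lau's Theorems~2.14 and~2.15) to move between the monad/comonad data and the functor-level adjunctions. The paper's forward direction differs slightly in presentation: it forms two comonads $\sfS_1$ (from $f^?\dashv f_!$) and $\sfS_2$ (from $f^*\dashv f_*$), gets $\sfS_1\dashv\sfT\dashv\sfS_2$ from Lau's theorem, and then identifies $\sfS_1\cong\sfS_2$ as comonads using the dual of Theorem~\ref{Isomorphism of monads adjoint pairs}\ref{Isomorphism of monads adjoint pairs:1}. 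You instead fix a single $\sfS$ from $U^{\sfT}\dashv f_*$, get $\sfT\dashv\sfS$ directly from Proposition~\ref{Conjugate units and counits}, and try to obtain $\sfS\dashv\sfT$ by transporting $S\dashv f^{\times}f_*$ along the functor isomorphism $U^{\sfT}(\lambda_{F^{\sfT}})\colon T\xrightarrow{\cong}f^{\times}f_*$. This is fine at the level of functors, and your whiskering argument does produce an adjunction of (co)monads $\sfS\dashv(f^{\times}f_*,\ldots)$; but to conclude $\sfS\dashv\sfT$ you must know that $U^{\sfT}(\lambda_{F^{\sfT}})$ is a morphism of \emph{monads}, not just of functors. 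This is exactly the content of Theorem~\ref{Isomorphism of monads adjoint pairs}\ref{Isomorphism of monads adjoint pairs:1}, which you should invoke explicitly; as written, the step is unjustified.

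The real problem is the converse, and you flag it yourself. Having $\Phi\colon\cB^{\sfT}\cong\cB^{\sfS}$, you set $f_*:=\Phi^{-1}F^{\sfS}$ and then assert that $\sfS\dashv\sfT$ ``allows the construction of a right adjoint $f^{\times}$ to $f_*$ and, more importantly, an isomorphism of monads $\gamma$.'' Nothing in your proposal actually produces $f^{\times}$ or $\gamma$, and the closing paragraph simply restates that these are the obstacles. They are not automatic. The paper performs an explicit construction: it first records that $\cB^{\sfT}$ is abelian (\cite[Proposition 5.3]{EM65}); then uses the dual of \cite[Lemma 4.3.3]{Bor94a} to exhibit $f_!$ as the kernel
\[
f_! = \Ker\bigl(f_*f^*f_!\xrightarrow{\,f_*f^*(\unit{f^*}{f_*}_{f_!})-\unit{f^*}{f_*}_{f_*f^*f_!}\,} f_*f^*f_*f^*f_!\bigr);
\]
then, using the functor-level adjunction $f^*f_*\dashv f^*f_!$ coming from $\sfS\dashv\sfT$ and Proposition~\ref{prop:1.6}, it forms the conjugate transformation $s$ of the map above and sets $f^?:=\Coker(s)$, so that $f^?\dashv f_!$ by Proposition~\ref{prop:1.65}; then it applies \cite[Theorem 2.15]{Lau06} to $\sfS\dashv\sfT$ to identify $\sfS\cong(f^?f_!,\ldots)$ as comonads, giving a comonad isomorphism $(f^?f_!,\ldots)\cong(f^*f_*,\ldots)$; and only then does it invoke the dual of Theorem~\ref{Isomorphism of monads adjoint pairs}\ref{Isomorphism of monads adjoint pairs:2}. (Note the paper builds $f^?$, a left adjoint of $f_!$, and uses the \emph{dual} form of Theorem~\ref{Isomorphism of monads adjoint pairs}; you propose the mirror construction of $f^{\times}$, which should work but is no easier.) Until you carry out this construction --- producing the extra adjoint, showing it exists using the kernel/cokernel presentation of $f_!$ or $f_*$ together with conjugacy and Proposition~\ref{prop:1.65}, and then verifying the (co)monad isomorphism via Lau's theorem --- the converse direction is not proved.
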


\begin{proof}
We set $f^*:=U^{\sfT}$ and $f_!:=F^{\sfT}$. If $f_!\dashv f^*$ has a Nakayama functor, then by Lemma \ref{Basic properties of Nakayama functor adjoint pairs} part $\ref{on adjunctions}$  we have adjunctions $f^?\dashv f_!\dashv f^*\dashv f_*$. Let $\sfS_1=(f^?\circ f_!,\counit{f^?}{f_!}, f^?(\unit{f^?}{f_!}_{f_!}))$ and $\sfS_2=(f^*\circ f_*,\counit{f^*}{f_*}, f^*(\unit{f^*}{f_*}_{f_*}))$ be the induced comonads on $\cB$.  It follows from \cite[Theorem 2.15]{Lau06} that $\sfS_1\dashv \sfT\dashv \sfS_2$. Finally, there exists an isomorphism $\sfS_1\cong \sfS_2$ of comonads by the dual of Theorem \ref{Isomorphism of monads adjoint pairs} part $\ref{Isomorphism of monads adjoint pairs:1}$. This shows that we have an ambidextrous adjunction.

Now assume there exists an ambidextrous adjunction $\sfS\dashv \sfT\dashv \sfS$ for some comonad $\sfS$ on $\cB$. It follows from \cite[Proposition 5.3]{EM65} that $\cB^{\sfT}$ is abelian. Furthermore, since $\sfT\dashv \sfS$, we have an isomorphism $J\colon \cB^{\sfS}\xrightarrow{\cong} \cB^{\sfT}$ of the Eilenberg-Moore categories which commute with the forgetful functors, see \cite[Theorem 2.14]{Lau06}. Hence, if we put $f_*:=J\circ F^{\sfS}$, we get an adjoint triple $f_!\dashv f^*\dashv f_*$ such that  $\sfS = (f^*\circ f_*,\counit{f^*}{f_*},f^*(\unit{f^*}{f_*}_{f_*}))$. Now by the dual of \cite[Lemma 4.3.3]{Bor94a} we have that for every $A\in \cB^{\sfT}$ there exists an exact sequence $0\to A\xrightarrow{\unit{f^*}{f_*}} f_*f^*(A) \xrightarrow{f_*f^*(\unit{f^*}{f_*}_A)-\unit{f^*}{f_*}_{f_*f^*(A)}} f_*f^*f_*f^*(A)$. Hence, in particular it follows that 
\[
f_! = \Ker (f_*f^*f_!\xrightarrow{f_*f^*(\unit{f^*}{f_*}_{f_!})-\unit{f^*}{f_*}_{f_*f^*f_!}} f_*f^*f_*f^*f_!)
\]

Since $\sfS\dashv \sfT$, we have that $f^*f_*\dashv f^*f_!$. It follows by Proposition \ref{prop:1.6} that there exists a natural transformation $s\colon f^*f_*f^*f_!f^*\to f^*f_*f^*$ such that $s$ and $f_*f^*(\unit{f^*}{f_*}_{f_!})-\unit{f^*}{f_*}_{f_*f^*f_!}$ are conjugate. If we let $f^?\colon \cB^{\sfT}\to \cB$ denote the cokernel of $s$, then it follows that $f^?\dashv f_!$ by Proposition \ref{prop:1.65}. Now consider the comonad $\sfS':= (f^?\circ f_!, \counit{f^?}{f_!},f^?(\unit{f^?}{f_!}_{f_!}))$. By \cite[Theorem 2.15]{Lau06} we have that $\sfS'\dashv \sfT$. But since the left adjoint of a functor is unique up to an isomorphism and the conjugate of a natural transformation is unique, it follows that the left adjoint of a monad is unique up to an isomorphism of comonads. Therefore, since $\sfS\dashv \sfT$, we get that $\sfS\cong \sfS'$. Hence, it follows that  
\[
(f^?\circ f_!, \counit{f^?}{f_!},f^?(\unit{f^?}{f_!}_{f_!}))\cong (f^*\circ f_*,\counit{f^*}{f_*},f^*(\unit{f^*}{f_*}_{f_*}))
\]
Finally, by the dual of Theorem \ref{Isomorphism of monads adjoint pairs} part $\ref{Isomorphism of monads adjoint pairs:2}$ it follows that $f_!\dashv f^*$ has a Nakayama functor.

\end{proof}

\section{Gorenstein homological algebra for Nakayama functors relative to endofunctors}\label{Gorenstein categories for comonads}

In this section we construct analogues of Gorenstein projective modules, Gorenstein injectives modules, and Iwanaga-Gorenstein algebras in the setting of Nakayama functors relative to endofunctors. Furthermore, we prove an analogue of Zak's result on the equality of the injective dimension for Iwanaga-Gorenstein algebras. We make the following assumption throughout the section.

\begin{Setting}\label{Nsetting:1}
Let $P\colon \cA\to \cA$ be a generating functor and let $\nu\colon \cA\to \cA$ be a Nakayama functor relative to $P$. We fix the notation $I:=\nu\circ P$, $T=P\circ \nu$, and $S:=I\circ \nu^-$.
\end{Setting}
Note that $T\dashv P\dashv I\dashv S$.

\subsection{Gorenstein \texorpdfstring{$P$}{}-projective and \texorpdfstring{$I$}{}-injective objects}

We call an object $A\in \cA$ for $P$-\emphbf{projective} or $I$-\emphbf{injective} if it is a summand of an object $P(A')$ or $I(A')$, respectively.

\begin{Definition}\label{Gorenstein objects for comonad with Nakayama functor paper}
Let $A\in \cA$ be an object.
\begin{enumerate}
\item\label{Gorenstein objects for comonad with Nakayama functor paper:1} An object $A\in \cA$ is \emphbf{Gorenstein} $P$-\emphbf{projective} if there exists an exact sequence 
\[
A_{\bullet}=\cdots \xrightarrow{s_{2}} A_{1}\xrightarrow{s_{1}} A_0\xrightarrow{s_0} A_{-1}\xrightarrow{s_{-1}} \cdots
\] 
with $A_i\in \cA$ being $P$-projective for all $i\in \bZ$, such that
\[
\cdots \xrightarrow{\nu(s_{2})} \nu(A_{1})\xrightarrow{\nu(s_{1})} \nu(A_0)\xrightarrow{\nu(s_0)} \nu(A_{-1})\xrightarrow{\nu(s_{-1})} \cdots
\] 
is exact, and with $Z_0(A_{\bullet})=\Ker s_0=A$. The subcategory of $\cA$ consisting of all Gorenstein $P$-projective objects is denoted by $\relGproj{P}{\cA}$
\item An object $A\in \cA$ is \emphbf{Gorenstein} $I$-\emphbf{injective} if there exists an exact sequence 
\[
A_{\bullet}=\cdots \xrightarrow{s_{2}} A_{1}\xrightarrow{s_{1}} A_0\xrightarrow{s_0} A_{-1}\xrightarrow{s_{-1}} \cdots
\] 
with $A_i\in \cA$ being $I$-injective for all $i\in \bZ$, such that
\[
\cdots \xrightarrow{\nu^-(s_{2})} \nu^-(A_{1})\xrightarrow{\nu^-(s_{1})} \nu^-(A_0)\xrightarrow{\nu^-(s_0)} \nu^-(A_{-1})\xrightarrow{\nu^-(s_{-1})} \cdots
\] 
is exact, and with $Z_0(A_{\bullet})=\Ker s_0=A$. The subcategory of $\cA$ consisting of all Gorenstein $I$-injective objects is denoted by $\relGinj{I}{\cA}$
\end{enumerate}
\end{Definition}

In Example \ref{One algebra} with $k$ a field and $P=f_!\circ f^*$, a module is Gorenstein $P$ projective or $I$-injective if and only if it is Gorenstein projective or injective, respectively. In Example \ref{Complexes} with $P=f_!\circ f^*$ all objects are Gorenstein $P$-projective and $I$-injective. In Example \ref{Morphism category} with $P=f_!\circ f^*$ a straightforward computation shows that a morphism is Gorenstein $P$-projective or $I$-injective if and only if it is a monomorphism or an epimorphism, respectively. 

The following result shows that $\relGproj{P}{\cA}$ and $\relGinj{I}{\cA}$ are equivalent categories.

\begin{Proposition}\label{1:prop:N1}
Let $A\in \cA$ be arbitrary. The following holds:
\begin{enumerate}
	\item\label{1:prop:N1,1} If $A\in \relGproj{P}{\cA}$, then $\nu(A)\in \relGinj{I}{\cA}$;
	\item\label{1:prop:N1,2} If $A\in \relGinj{I}{\cA}$, then $\nu^-(A)\in \relGproj{P}{\cA}$;
	\item\label{1:prop:N1,3} If $A\in \relGproj{P}{\cA}$, then $\lambda_A\colon A\to \nu^-\nu(A)$ is an isomorphism;
	\item\label{1:prop:N1,4} If $A\in \relGinj{I}{\cA}$, then $\sigma_A\colon \nu\nu^-(A)\to A$ is an isomorphism.
\end{enumerate}
In particular, the restriction $\nu|_{\relGproj{P}{\cA}}\colon \relGproj{P}{\cA}\to \relGinj{I}{\cA}$ is an equ\-ivalence with quasi-inverse $\nu^-|_{\relGinj{I}{\cA}}\colon \relGinj{I}{\cA}\to \relGproj{P}{\cA}$.
\end{Proposition}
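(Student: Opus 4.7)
The key observation is that by Definition \ref{definition:N1}(2) the unit $\lambda_{P(B)}$ is an isomorphism for every $B\in\cA$; hence $\lambda_Q$ is an isomorphism for every $P$-projective $Q$, as a retract of some $P(B)$. Dually, by Lemma \ref{lemma:N1}(\ref{lemma:N1,2}) the counit $\sigma_J$ is an isomorphism for every $I$-injective $J$. The plan is to prove (\ref{1:prop:N1,1}) and (\ref{1:prop:N1,3}) directly and then deduce (\ref{1:prop:N1,2}) and (\ref{1:prop:N1,4}) by duality, using that $\nu^-$ is a Nakayama functor relative to $I$ on $\cA\op$, as noted after Lemma \ref{lemma:N1}.

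For (\ref{1:prop:N1,1}), let $A_\bullet$ be an exact sequence witnessing $A\in\relGproj{P}{\cA}$ and consider $\nu(A_\bullet)$. Each $\nu(A_i)$ is a summand of $\nu P(A_i')=I(A_i')$, hence $I$-injective, and $\nu(A_\bullet)$ is exact by hypothesis. A short diagram chase using right exactness of $\nu$ applied to $A_2\xrightarrow{s_2}A_1\twoheadrightarrow A\to 0$ together with exactness of $\nu(A_\bullet)$ at $\nu(A_1)$ and $\nu(A_0)$ identifies $Z_0(\nu(A_\bullet))=\Ker\nu(s_0)$ with $\nu(A)$, with the canonical map $\nu(A)\to\nu(A_0)$ being a monomorphism. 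Finally, since $\lambda_{A_i}$ is an isomorphism for each $i$, the natural transformation $\lambda$ induces a term-by-term isomorphism of complexes $A_\bullet\xrightarrow{\cong}\nu^-\nu(A_\bullet)$, so exactness of $A_\bullet$ transfers to $\nu^-\nu(A_\bullet)$. This verifies the three conditions defining $\nu(A)\in\relGinj{I}{\cA}$.

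For (\ref{1:prop:N1,3}), apply the left exact functor $\nu^-$ to the short exact sequence $0\to\nu(A)\to\nu(A_0)\to\nu(A_{-1})$ from the previous step to obtain an isomorphism $\nu^-\nu(A)\xrightarrow{\cong}\Ker\nu^-\nu(s_0)$. Independently, the term-by-term isomorphism $\lambda\colon A_\bullet\xrightarrow{\cong}\nu^-\nu(A_\bullet)$ restricts via $\lambda_{A_0}$ to an isomorphism $A=\Ker s_0\xrightarrow{\cong}\Ker\nu^-\nu(s_0)$. Naturality of $\lambda$ along the inclusion $A\hookrightarrow A_0$ shows that, after composing with the mono $\Ker\nu^-\nu(s_0)\hookrightarrow\nu^-\nu(A_0)$, both descriptions agree; cancelling this mono identifies the composite of the two isomorphisms with $\lambda_A\colon A\to\nu^-\nu(A)$, which is therefore an isomorphism.

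Parts (\ref{1:prop:N1,2}) and (\ref{1:prop:N1,4}) follow by applying (\ref{1:prop:N1,1}) and (\ref{1:prop:N1,3}) to the Nakayama functor $\nu^-$ relative to $I$ on $\cA\op$, under which the roles of $P$-projective and $I$-injective are exchanged and $\sigma$ plays the role of $\lambda$. The concluding equivalence statement is then immediate: (\ref{1:prop:N1,1}) and (\ref{1:prop:N1,2}) show $\nu$ and $\nu^-$ restrict to functors between the two subcategories, while (\ref{1:prop:N1,3}) and (\ref{1:prop:N1,4}) supply the natural isomorphisms $\nu^-\nu\cong 1$ and $\nu\nu^-\cong 1$ on the appropriate domains. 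I expect part (\ref{1:prop:N1,3}) to be the delicate step, since one must verify via naturality that the isomorphism $A\cong\nu^-\nu(A)$ arising from the two descriptions of $Z_0(\nu^-\nu(A_\bullet))$ coincides with the canonical unit map $\lambda_A$ itself, rather than with some other abstract isomorphism.
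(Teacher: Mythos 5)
Your proof is correct and takes essentially the same route as the paper: prove (\ref{1:prop:N1,1}) and (\ref{1:prop:N1,3}) directly from the defining exact complex $A_\bullet$, using that $\lambda$ is an isomorphism on $P$-projectives to get a term-wise isomorphism $A_\bullet \cong \nu^-\nu(A_\bullet)$, then deduce (\ref{1:prop:N1,2}) and (\ref{1:prop:N1,4}) by duality. You are somewhat more explicit than the paper about the identification $Z_0(\nu(A_\bullet))=\nu(A)$ (with $\nu(A)\hookrightarrow\nu(A_0)$ a monomorphism) and about matching the resulting isomorphism with $\lambda_A$, but the argument is the same one.
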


\begin{proof}
Let $Q_{\bullet}=\cdots \xrightarrow{s_2} Q_{1}\xrightarrow{s_{1}} Q_0\xrightarrow{s_0} Q_{-1}\xrightarrow{s_{-1}} \cdots$ be an exact sequence as in the definition of Gorenstein $P$-projective. Applying $\nu$ gives an exact sequence
\[
\nu(Q_{\bullet})=\cdots \xrightarrow{\nu(s_{2})} \nu(Q_{1})\xrightarrow{\nu(s_{1})} \nu(Q_0)\xrightarrow{\nu(s_0)} \nu(Q_{-1})\xrightarrow{\nu(s_{-1})} \cdots
\] 
Since the components of $Q_{\bullet}$ are $P$-projective, we have an isomorphism $\nu^-\nu (Q_{\bullet})\cong Q_{\bullet}$, and therefore $\nu^-\nu (Q_{\bullet})$ is exact. Also, the complex $\nu(Q_{\bullet})$ has $I$-injective components since $\nu$ sends $P$-projective objects to $I$-injective objects. Hence, if $Z_0(Q_{\bullet})=A$, then $Z_0(\nu(Q_{\bullet}))=\nu(A)\in \relGinj{I}{\cA}$. This shows $\ref{1:prop:N1,1}$. Now consider the exact sequence $0\to \nu(A)\to\nu(Q_0)\xrightarrow{\nu(s_0)} \nu(Q_{-1})$. Applying $\nu^-$ gives a commutative diagram
\[
\begin{tikzpicture}[description/.style={fill=white,inner sep=2pt}]
\matrix(m) [matrix of math nodes,row sep=2.5em,column sep=5.0em,text height=1.5ex, text depth=0.25ex] 
{0 &  A & Q_0 & Q_{-1}  \\
 0 &  \nu^-\nu(A) &  \nu^-\nu(Q_0)   & \nu^-\nu(Q_{-1})      \\};
\path[->]
(m-1-1) edge node[auto] {$$} 	    													   	 			  (m-1-2)
(m-1-2) edge node[auto] {$$} 	    																			(m-1-3)
(m-1-3) edge node[auto] {$s_0$} 	    													 		 	  (m-1-4)
(m-2-1) edge node[auto] {$$} 	    													      			(m-2-2)
(m-2-2) edge node[auto] {$$} 	    																			(m-2-3)
(m-2-3) edge node[auto] {$\nu^-\nu(s_0)$} 	    									(m-2-4)
(m-1-2) edge node[auto] {$\lambda_A$} 	    								   					(m-2-2)
(m-1-3) edge node[auto] {$\lambda_{Q_0}$} 	    									 			(m-2-3)
(m-1-4) edge node[auto] {$\lambda_{Q_{-1}}$} 	    								 				(m-2-4);	
\end{tikzpicture}
\]
where the lower row is exact since $\nu^-$ is left exact. Since $\lambda_{Q_0}$ and $\lambda_{Q_{-1}}$ are isomorphisms, it follows that $\lambda_A$ is an isomorphism. This proves part $\ref{1:prop:N1,3}$. Part $\ref{1:prop:N1,2}$ and $\ref{1:prop:N1,4}$ are proved dually.
\end{proof}

In the following, note that if $A\in \relGproj{P}{\cA}$, then $L_i\nu(A)=0$ for all $i>0$. Dually, if $A\in \relGinj{I}{\cA}$, then $R^i\nu^-(A)=0$ for all $i>0$.

\begin{Proposition}\label{Gorenstein objects for comonad with Nakayama functor description}
Let $A\in \cA$. Then $A\in \relGproj{P}{\cA}$ if and only if the following holds:
\begin{enumerate}
\item\label{Gorenstein objects for comonad with Nakayama functor description:1} $L_i\nu(A)=0$ for all $i>0$;
\item\label{Gorenstein objects for comonad with Nakayama functor description:2} $R^i\nu^-(\nu(A))=0$ for all $i>0$;
\item\label{Gorenstein objects for comonad with Nakayama functor description:3} The unit $\lambda_A\colon A\to \nu^-\nu(A)$ is an isomorphism.
\end{enumerate}
\end{Proposition}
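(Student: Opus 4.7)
The plan is to split the proof into the two implications, with the reverse direction done by a splicing construction.

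For the forward implication, suppose $A \in \relGproj{P}{\cA}$ with defining exact sequence $Q_{\bullet} = \cdots \to Q_1 \to Q_0 \to Q_{-1} \to \cdots$ as in Definition \ref{Gorenstein objects for comonad with Nakayama functor paper}. Truncating at position $0$ gives a $P$-projective resolution $\cdots \to Q_1 \to Q_0 \to A \to 0$ whose image under $\nu$ is exact by hypothesis. Since $P$ is adapted to $\nu$ (Lemma \ref{lemma:N1}\ref{lemma:N1,5}), this resolution computes $L\nu(A)$, so $L_i\nu(A)=0$ for $i>0$, giving (i). Condition (iii) is Proposition \ref{1:prop:N1}\ref{1:prop:N1,3}. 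For (ii), Proposition \ref{1:prop:N1}\ref{1:prop:N1,1} shows $\nu(A) \in \relGinj{I}{\cA}$, and the negative part of the corresponding defining sequence furnishes an $I$-injective coresolution of $\nu(A)$ whose $\nu^-$-image is exact; combined with the fact that $I$ is adapted to $\nu^-$ (Lemma \ref{lemma:N1}\ref{lemma:N1,6}), this yields $R^i\nu^-(\nu(A))=0$ for $i>0$.

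For the reverse direction, assume (i), (ii), (iii) and construct $Q_{\bullet}$ by splicing a resolution and a coresolution at $A$. For the positive half, choose a $P$-projective resolution $\cdots \to P(B_1) \to P(B_0) \to A \to 0$, which exists because $P$ is generating. Condition (i) together with Lemma \ref{lemma:N1}\ref{lemma:N1,5} implies that the sequence $\cdots \to I(B_1) \to I(B_0) \to \nu(A) \to 0$ is exact. For the negative half, choose an $I$-injective coresolution $0 \to \nu(A) \to I(A_0) \to I(A_{-1}) \to \cdots$ (possible since $I$ is cogenerating by Lemma \ref{lemma:N1}\ref{lemma:N1,4}). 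Applying $\nu^-$ and using (ii) together with Lemma \ref{lemma:N1}\ref{lemma:N1,6} gives an exact sequence
\[
0 \to \nu^-\nu(A) \to \nu^- I(A_0) \to \nu^- I(A_{-1}) \to \cdots.
\]
By (iii) we identify $\nu^-\nu(A) \cong A$, and since the unit $\lambda$ is an isomorphism on objects of the form $P(A_i)$, we have natural isomorphisms $\nu^- I(A_i) = \nu^-\nu P(A_i) \cong P(A_i)$. Transporting through these isomorphisms produces an exact sequence $0 \to A \to P(A_0) \to P(A_{-1}) \to \cdots$ with $P$-projective components whose $\nu$-image is (up to natural isomorphism) the original $I$-injective coresolution, hence exact.

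Finally, splice the two halves at $A$ to form
\[
Q_{\bullet} = \cdots \to P(B_1) \to P(B_0) \to P(A_0) \to P(A_{-1}) \to \cdots
\]
indexed so that $Q_0 = P(A_0)$, giving $\ker(Q_0 \to Q_{-1}) = A$ from exactness of the coresolution. Exactness of $Q_{\bullet}$ at the splicing position follows from the fact that $P(B_0) \to A$ is epi and $A \hookrightarrow P(A_0)$ is mono, so the image of $P(B_0) \to P(A_0)$ equals $A = \ker(Q_0 \to Q_{-1})$; exactness elsewhere is inherited from the resolution and coresolution. Applying $\nu$ gives the analogous splicing of $\cdots \to I(B_0) \to \nu(A) \to 0$ with $0 \to \nu(A) \to I(A_0) \to \cdots$, which is likewise exact. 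Thus $A \in \relGproj{P}{\cA}$.

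The main technical obstacle is the reverse direction: one must verify that the two halves glue to an exact complex and that $\nu$ continues to preserve exactness at the splicing point. The delicate point is that the isomorphism $\nu^- I(A_i) \cong P(A_i)$ used to produce the $P$-projective coresolution is natural, so that applying $\nu$ recovers the original $I$-injective coresolution and exactness of $\nu(Q_{\bullet})$ at the splicing position is automatic.
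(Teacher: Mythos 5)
Your proof is correct and follows essentially the same approach as the paper. For the forward direction the paper performs the dimension shift $L_i\nu(\Ker s_1)\cong L_{i+1}\nu(A)$ directly on the short exact sequence $0\to\Ker s_1\to A_1\to A\to 0$ and argues by induction, while you invoke adaptedness of $P$ to $\nu$ (Lemma \ref{lemma:N1}\ref{lemma:N1,5}) to say the truncated resolution computes $L\nu(A)$ — these are the same argument packaged differently, since the adaptedness lemma is itself established by exactly this kind of shift. For the reverse direction the paper also splices a $P$-projective resolution of $A$ with the $\nu^-$-image of an $I$-injective coresolution of $\nu(A)$; the paper keeps the terms $\nu^-(J_i)$ (noting implicitly that they are $P$-projective and that the result is ``still exact when one applies $\nu$''), whereas you transport through the explicit natural isomorphisms $\lambda_{P(A_i)}$ and verify via the triangle identities and the isomorphism $\sigma_I$ (Lemma \ref{lemma:N1}\ref{lemma:N1,2}) that applying $\nu$ recovers the original coresolution. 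Your version spells out the step the paper leaves terse, but the underlying construction and the key facts used (Propositions \ref{1:prop:N1}, adaptedness, $I$ cogenerating) are identical.
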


\begin{proof}
Assume $A\in \relGproj{P}{\cA}$, and let $A_{\bullet}$ be the complex in the definition of Gorenstein $P$-projective. Consider the exact sequence 
\[
0\to \Ker s_1\to A_1\to A\to 0
\]
Applying $\nu$ to this gives a long exact sequence
\begin{align*}
\cdots \to L_1\nu(A_1)\to L_1\nu(A) \to \nu(\Ker s_1)\to \nu(A_1)\to \nu(A)\to 0
\end{align*} 
But by assumption the sequence $0\to \nu(\Ker s_1)\to \nu(A_1)\to \nu(A)\to 0 $ is exact and $L_i\nu(A_1)=0$ for all $i>0$. Hence, it follows that $L_1\nu(A)=0$. Since $\Ker s_1$ is also Gorenstein $P$-projective and $L_i\nu(\Ker s_1)\cong L_{i+1}\nu(A)$ for all $i>0$, it follows by induction that $L_i\nu(A)=0$ for all $i>0$. Since $\nu(A)\in \relGinj{I}{\cA}$, a dual argument shows that $R^i\nu^-(\nu(A))=0$ for all $i>0$. Finally, $\lambda_A\colon A\to \nu^-\nu(A)$ is an isomorphism by Proposition \ref{1:prop:N1}.

Assume $A$ satisfies $\ref{Gorenstein objects for comonad with Nakayama functor description:1}$, $\ref{Gorenstein objects for comonad with Nakayama functor description:2}$, and $\ref{Gorenstein objects for comonad with Nakayama functor description:3}$. Choose an exact sequence $0\to \nu(A) \to J_{-1}\to J_{-2}\to \cdots$ with $J_i$ being $I$-injective for all $i$. Applying $\nu^-$ and using that $A\cong \nu^-\circ \nu(A)$ and $R^i\nu^-(\nu(A))=0$ for all $i$, we get an exact sequence 
\[
0\to A\to \nu^-(J_{-1})\to \nu^-(J_{-2})\to \cdots .
\] 
which is still exact when one applies $\nu$. Now choose an exact sequence $\cdots \to Q_{1}\to Q_0\to A\to 0$ with $Q_i$ being $P$-projective for all $i$, and consider the exact sequence 
\[
Q_{\bullet} = \cdots \to Q_{1}\to Q_0\to \nu^-(J_{-1})\to \nu^-(J_{-2})\to \cdots 
\] 
obtained by gluing the two sequence together at $A$. Since $L_i\nu(A)=0$ for all $i>0$, this sequence is still exact when one applies $\nu$. Furthermore, it has $P$-projective components and satisfy $Z_0(Q_{\bullet})=A$. This shows that $A\in \relGproj{P}{\cA}$. 
\end{proof}

We have the following dual of Proposition \ref{Gorenstein objects for comonad with Nakayama functor description}.

\begin{Proposition}\label{dual Gorenstein objects for comonad with Nakayama functor}
Let $A\in \cA$. Then $A\in \relGinj{I}{\cA}$ if and only if the following holds:
\begin{enumerate}
\item\label{dual Gorenstein objects for comonad with Nakayama functor:1} $R^i\nu^-(A)=0$ for all $i>0$;
\item\label{dual Gorenstein objects for comonad with Nakayama functor:2} $L_i\nu(\nu^-(A))=0$ for all $i>0$;
\item\label{dual Gorenstein objects for comonad with Nakayama functor:3} The counit $\sigma_A\colon \nu\nu^-(A)\to A$ is an isomorphism.
\end{enumerate}
\end{Proposition}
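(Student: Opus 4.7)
The plan is to obtain this as a formal consequence of Proposition \ref{Gorenstein objects for comonad with Nakayama functor description} by passing to the opposite category. As noted immediately after Lemma \ref{lemma:N1}, the functor $\nu^-$ is a Nakayama functor relative to $I$ in $\cA\op$; the hypothesis that $I$ be generating in $\cA\op$ (i.e.\ cogenerating in $\cA$) is precisely Lemma \ref{lemma:N1}\ref{lemma:N1,4}. Under this duality, an object $A\in\cA$ lies in $\relGinj{I}{\cA}$ if and only if it lies in $\relGproj{I}{\cA\op}$ in the sense of Definition \ref{Gorenstein objects for comonad with Nakayama functor paper}, because $I$-injective objects in $\cA$ are exactly the $I$-projective objects in $\cA\op$, and the condition that $\nu^-$ preserve exactness of the relevant complex is literally the same on both sides.

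Having made this identification, I would simply rewrite the three conditions of Proposition \ref{Gorenstein objects for comonad with Nakayama functor description}, as applied to $A$ in $\cA\op$ with the pair of adjoint functors $\nu^-\dashv \nu$ (note the reversal of adjoints under $(-)\op$). The left derived functor in $\cA\op$ is the right derived functor in $\cA$, so condition \ref{Gorenstein objects for comonad with Nakayama functor description:1} becomes $R^i\nu^-(A)=0$ for $i>0$; the right derived functor in $\cA\op$ of the right adjoint is the left derived functor in $\cA$ of $\nu$, so condition \ref{Gorenstein objects for comonad with Nakayama functor description:2} becomes $L_i\nu(\nu^-(A))=0$ for $i>0$; and the unit of $\nu^-\dashv\nu$ in $\cA\op$ at $A$ is exactly the counit $\sigma_A\colon \nu\nu^-(A)\to A$ in $\cA$, so condition \ref{Gorenstein objects for comonad with Nakayama functor description:3} becomes the statement that $\sigma_A$ is an isomorphism. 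These are precisely the three conditions in the statement.

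If one preferred a direct proof, one would mirror the argument of the previous proposition: for the forward direction, start from a Gorenstein $I$-injective coresolution $J_\bullet$ of $A$, split it into short exact sequences, apply the long exact sequence of $\nu^-$, and run induction using that the $I$-injective components satisfy $R^i\nu^-(J_j)=0$; then use $\nu^-(A)\in\relGproj{P}{\cA}$ from Proposition \ref{1:prop:N1}\ref{1:prop:N1,2} together with a dual induction for the vanishing of $L_i\nu(\nu^-(A))$, and invoke Proposition \ref{1:prop:N1}\ref{1:prop:N1,4} for the isomorphism $\sigma_A$. For the converse, splice a $P$-projective resolution of $\nu^-(A)$ (whose image under $\nu$ remains exact beyond the $0$th term by condition \ref{dual Gorenstein objects for comonad with Nakayama functor:2}, giving an exact complex past $A$ via $\sigma_A$ in condition \ref{dual Gorenstein objects for comonad with Nakayama functor:3}) with an $I$-injective coresolution of $A$ (whose image under $\nu^-$ remains exact by condition \ref{dual Gorenstein objects for comonad with Nakayama functor:1}). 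The only real subtlety, which is also the main obstacle in the direct approach, is to ensure that the two halves match up correctly at $A$ so that the spliced complex has $A$ as its zeroth cycle and its image under $\nu^-$ remains exact there; this is where conditions \ref{dual Gorenstein objects for comonad with Nakayama functor:2} and \ref{dual Gorenstein objects for comonad with Nakayama functor:3} combine. Given that the duality route bypasses this entirely, that is the route I would take.
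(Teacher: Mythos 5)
Your duality argument is exactly the route the paper takes: Proposition \ref{dual Gorenstein objects for comonad with Nakayama functor} is stated as the dual of Proposition \ref{Gorenstein objects for comonad with Nakayama functor description} with no further proof, resting on the observation recorded after Lemma \ref{lemma:N1} that $\nu^-$ is a Nakayama functor relative to $I$ in $\cA\op$. Your dictionary between the three conditions (left/right derived functors swapping, unit becoming counit) is correct and complete.
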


\begin{Proposition}\label{resolving and coresolving}
The following holds:
\begin{enumerate}
\item $\relGproj{P}{\cA}$ is a resolving subcategory of $\cA$;
\item $\relGinj{I}{\cA}$ is a coresolving subcategory of $\cA$.
\end{enumerate}
\end{Proposition}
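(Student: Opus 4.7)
The plan is to verify both statements using the intrinsic characterizations established in Propositions \ref{Gorenstein objects for comonad with Nakayama functor description} and \ref{dual Gorenstein objects for comonad with Nakayama functor}; I will describe the argument for $\relGproj{P}{\cA}$ and explain that the dual case is formally symmetric. Call the three conditions of Proposition \ref{Gorenstein objects for comonad with Nakayama functor description} \emph{(i)}, \emph{(ii)}, \emph{(iii)}.

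First I will handle the generating property. For any $A' \in \cA$ the bounded complex $\cdots\to 0\to P(A')\xrightarrow{1}P(A')\to 0\to\cdots$ (placed in degrees $1$ and $0$) is exact, has $P$-projective components, satisfies $Z_0=P(A')$, and stays exact after applying $\nu$. Hence every object of the form $P(A')$ lies in $\relGproj{P}{\cA}$. Combined with the assumption that $P$ is generating, this gives that $\relGproj{P}{\cA}$ is generating. Summand-closure is then immediate from Proposition \ref{Gorenstein objects for comonad with Nakayama functor description}: each of the conditions (i), (ii), (iii) is additive in $A$ and passes to direct summands, using that $\nu$, $\nu^-$ and their derived functors commute with finite direct sums.

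For closure under extensions I take a short exact sequence
\[
0\to A\to B\to C\to 0
\]
with $A,C\in\relGproj{P}{\cA}$. The long exact sequence for $L_\bullet\nu$, together with (i) for $A$ and $C$, yields $L_i\nu(B)=0$ for $i>0$ and an exact sequence $0\to\nu(A)\to\nu(B)\to\nu(C)\to 0$. Applying $R^\bullet\nu^-$ to this last sequence and using (ii) for $A$ and $C$ gives $R^i\nu^-(\nu(B))=0$ for $i>0$ and a short exact sequence $0\to\nu^-\nu(A)\to\nu^-\nu(B)\to\nu^-\nu(C)\to 0$. Comparing this with the original sequence via the natural transformation $\lambda$ and invoking the five lemma, using (iii) for $A$ and $C$, shows that $\lambda_B$ is an isomorphism. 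Thus $B$ satisfies (i), (ii), (iii) and hence lies in $\relGproj{P}{\cA}$.

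For closure under kernels of epimorphisms I consider $0\to K\to B\to C\to 0$ with $B,C\in\relGproj{P}{\cA}$ and run the same machinery: the long exact sequence for $L_\bullet\nu$ gives (i) for $K$ (with a dimension shift, using $L_1\nu(C)=0$ to also deduce exactness of $0\to\nu(K)\to\nu(B)\to\nu(C)\to 0$), then the long exact sequence for $R^\bullet\nu^-$ gives (ii) for $K$, and finally the five lemma applied to the $\lambda$-square delivers (iii) for $K$. The resolving properties of $\relGproj{P}{\cA}$ follow, and the coresolving properties of $\relGinj{I}{\cA}$ follow by the same argument, reading Proposition \ref{dual Gorenstein objects for comonad with Nakayama functor} in place of Proposition \ref{Gorenstein objects for comonad with Nakayama functor description}, working with the cogenerating functor $I$, and applying the fact that $I$-injective objects are Gorenstein $I$-injective via the dual trivial complex. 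The main point to watch in the whole argument is keeping condition (ii)—which involves $R^i\nu^-$ evaluated at $\nu(-)$, not at $-$—correctly propagated through each long exact sequence; this is where one first needs (i) to get the short exact sequence after applying $\nu$, and only then can one apply $R^\bullet\nu^-$.
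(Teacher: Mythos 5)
Your proof is correct and takes the same route the paper intends: the paper's own proof is the one-line remark that the proposition follows from the characterizations in Propositions \ref{Gorenstein objects for comonad with Nakayama functor description} and \ref{dual Gorenstein objects for comonad with Nakayama functor}, and you are supplying exactly the details behind that remark. One small point deserves attention in the kernels-of-epimorphisms step. You write that ``the long exact sequence for $R^\bullet\nu^-$ gives (ii) for $K$,'' but for $i=1$ the long exact sequence applied to $0\to\nu(K)\to\nu(B)\to\nu(C)\to 0$ only yields $R^1\nu^-(\nu(K))\cong\Coker\bigl(\nu^-\nu(B)\to\nu^-\nu(C)\bigr)$, and this is not automatically zero from the vanishing conditions on $B$ and $C$ alone; you additionally need to observe that $\nu^-\nu(B)\to\nu^-\nu(C)$ is an epimorphism, which follows by comparing it with the given epimorphism $B\to C$ under the isomorphisms $\lambda_B$ and $\lambda_C$ from condition (iii). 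Once you insert that observation (note it is absent from the extension case, where there is nothing to the left of $\nu^-\nu(A)$ in the sequence), the five-lemma step and the rest of the argument go through as you describe, and the dual statement for $\relGinj{I}{\cA}$ is indeed obtained by running everything through Proposition \ref{dual Gorenstein objects for comonad with Nakayama functor}.
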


\begin{proof}
This follows from the description of $\relGproj{P}{\cA}$ and $\relGinj{I}{\cA}$ in Proposition \ref{Gorenstein objects for comonad with Nakayama functor description} and Proposition \ref{dual Gorenstein objects for comonad with Nakayama functor}.
\end{proof} 

Hence, we can define the resolving dimension $\dim_{\relGproj{P}{\cA}}(-)$ and the coresolving dimension $\dim_{\relGinj{I}{\cA}}(-)$ as described in Subsection \ref{Properties of subcategories}.

We end this subsection with the following result

\begin{Lemma}\label{Cokernels of Gorenstein relative projectives}
The following holds:
\begin{enumerate}
\item\label{Cokernels of Gorenstein relative projectives:1} If $A_0\xrightarrow{i} A_1\to A_2\to 0$ is an exact sequence where $A_0,A_1\in \relGproj{P}{\cA}$ and such that $\nu(i)$ is a monomorphism, then $i$ is a monomorphism and $A_2\in \relGproj{P}{\cA}$.
\item\label{Cokernels of Gorenstein relative projectives:2} If $0\to A_0\to A_1\xrightarrow{p} A_2$ is an exact sequence with $A_1,A_2\in \relGinj{I}{\cA}$ and such that $\nu^-(p)$ is an epimorphism, then $p$ is an epimorphism and $A_0\in \relGinj{I}{\cA}$.
\end{enumerate}
\end{Lemma}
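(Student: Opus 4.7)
The plan is to use Proposition \ref{Gorenstein objects for comonad with Nakayama functor description} (and its dual \ref{dual Gorenstein objects for comonad with Nakayama functor}) as the working characterization of $\relGproj{P}{\cA}$ and $\relGinj{I}{\cA}$, and to play the unit $\lambda$ against the two short exact sequences obtained by applying $\nu$ and $\nu^-$. I only sketch part \ref{Cokernels of Gorenstein relative projectives:1}; part \ref{Cokernels of Gorenstein relative projectives:2} follows by the same argument in $\cA\op$.

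First I would apply $\nu$ to the given exact sequence $A_0\xrightarrow{i}A_1\to A_2\to 0$. Right exactness of $\nu$ gives an exact sequence $\nu(A_0)\xrightarrow{\nu(i)}\nu(A_1)\to \nu(A_2)\to 0$, and since $\nu(i)$ is assumed monic, this is in fact a short exact sequence. By Proposition \ref{1:prop:N1}\ref{1:prop:N1,1}, the objects $\nu(A_0)$ and $\nu(A_1)$ lie in $\relGinj{I}{\cA}$, so $R^n\nu^-$ vanishes on them for $n>0$. Applying $\nu^-$ and inspecting the long exact sequence therefore yields a short exact sequence
\[
0\to \nu^-\nu(A_0)\to \nu^-\nu(A_1)\to \nu^-\nu(A_2)\to 0,
\]
together with $R^n\nu^-\nu(A_2)=0$ for all $n>0$ (by a standard dimension shift in the long exact sequence).

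Next I would compare the two sequences through naturality of $\lambda\colon 1_{\cA}\to \nu^-\nu$. The squares $\lambda_{A_j}$ form a commutative diagram between the original sequence and the newly produced short exact sequence, and by Proposition \ref{1:prop:N1}\ref{1:prop:N1,3} the components $\lambda_{A_0}$ and $\lambda_{A_1}$ are isomorphisms. Composing $i$ with the isomorphism $\lambda_{A_1}$ gives $\lambda_{A_1}\circ i=\nu^-\nu(i)\circ \lambda_{A_0}$, which is a composite of an isomorphism with a monomorphism, hence monic; therefore $i$ itself is monic. Once this is known, the top row $0\to A_0\to A_1\to A_2\to 0$ is short exact, and the five lemma applied to the comparison diagram forces $\lambda_{A_2}$ to be an isomorphism.

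Finally I would verify the three conditions of Proposition \ref{Gorenstein objects for comonad with Nakayama functor description} for $A_2$. Condition \ref{Gorenstein objects for comonad with Nakayama functor description:3} is exactly the isomorphism $\lambda_{A_2}$ just established. Condition \ref{Gorenstein objects for comonad with Nakayama functor description:2}, namely $R^n\nu^-\nu(A_2)=0$ for $n>0$, was obtained in the previous paragraph. For condition \ref{Gorenstein objects for comonad with Nakayama functor description:1}, I apply $\nu$ to the short exact sequence $0\to A_0\to A_1\to A_2\to 0$; since $A_0,A_1\in \relGproj{P}{\cA}$ implies $L_n\nu(A_0)=L_n\nu(A_1)=0$ for $n>0$, the long exact sequence forces $L_n\nu(A_2)=0$ for $n>0$. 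Hence $A_2\in \relGproj{P}{\cA}$, completing part \ref{Cokernels of Gorenstein relative projectives:1}. No step is individually difficult; the only subtlety is the order of operations, in particular applying $\nu$ before worrying about monicity of $i$ so that the assumption on $\nu(i)$ can be used, and recognizing that Proposition \ref{Gorenstein objects for comonad with Nakayama functor description} is the right ``intrinsic'' criterion to verify rather than building a two-sided resolution by hand.
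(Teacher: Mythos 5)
Your proposal is correct and follows essentially the same route as the paper: establish that $i$ is monic by noting $\nu^-\nu(i)$ is monic and corresponds to $i$ through the isomorphisms $\lambda_{A_0},\lambda_{A_1}$, then conclude $A_2\in\relGproj{P}{\cA}$ via the characterization in Proposition \ref{Gorenstein objects for comonad with Nakayama functor description}. The paper compresses the second step into a one-line citation; you have simply spelled out the verification of the three conditions (short five lemma for $\lambda_{A_2}$, long exact sequences for the vanishing of $L_n\nu$ and $R^n\nu^-\nu$), which is exactly what that citation is hiding.
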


\begin{proof}
We prove part $\ref{Cokernels of Gorenstein relative projectives:1}$, part $\ref{Cokernels of Gorenstein relative projectives:2}$ is proved dually. Since $\nu(i)$ is a monomorphism, it follows that $\nu^-\nu(i)$ is a monomorphism. Since $\nu^-\nu(i)$ corresponds to $i$ under the isomorphisms $A_0\cong \nu^-\nu(A_0)$ and $A_1\cong \nu^-\nu(A_1)$, it follows that $i$ is a monomorphism. The fact that $A_2\in \relGproj{P}{\cA}$ follows from Proposition \ref{Gorenstein objects for comonad with Nakayama functor description}.
\end{proof}

\subsection{Iwanaga-Gorenstein functors}

\begin{Definition}\label{definition:N2}
Assume $P$ is a generating functor with Nakayama functor $\nu$ relative to $P$. Then $P$ is called \emphbf{Iwanaga-Gorenstein} if there exists an $m\geq 0$ such that $L_i\nu(A)=0$ and $R^i\nu^-(A)=0$ for all $A\in \cA$ and $i>m$.
\end{Definition}

We have a simpler description of $\relGproj{P}{\cA}$ and $\relGinj{I}{\cA}$ when $P$ is Iwanaga-Gorenstein.

\begin{Theorem}\label{thm:N1}
Assume $P$ is Iwanaga-Gorenstein. The following holds:
\begin{enumerate}
	\item\label{thm:N1,1} $A \in \relGproj{P}{\cA}$ if and only if $L_i\nu(A)=0$ for all $i>0$;
	\item\label{thm:N1,2} $A \in \relGinj{I}{\cA}$ if and only if $R^i\nu^-(A)=0$ for all $i>0$.
\end{enumerate}
\end{Theorem}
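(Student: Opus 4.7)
My plan is to use Propositions~\ref{Gorenstein objects for comonad with Nakayama functor description} and~\ref{dual Gorenstein objects for comonad with Nakayama functor}, which characterize $\relGproj{P}{\cA}$ and $\relGinj{I}{\cA}$ by three conditions: vanishing of a left derived functor, vanishing of a right derived functor, and an isomorphism of unit (respectively counit). Since $\nu^-$ is itself a Nakayama functor relative to $I$ in $\cA\op$, part~\ref{thm:N1,2} follows from part~\ref{thm:N1,1} by passing to the opposite category, so I would only write the proof of~\ref{thm:N1,1}. The ``only if'' direction of~\ref{thm:N1,1} is immediate from Proposition~\ref{Gorenstein objects for comonad with Nakayama functor description}, so the real content is to show that if $L_i\nu(A)=0$ for all $i>0$, then the remaining two conditions of that proposition hold; this is where the Iwanaga-Gorenstein bound $m$ enters.

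First I would pick a $P$-projective resolution $\cdots \to P(A_1)\to P(A_0)\to A\to 0$. The hypothesis together with the fact that $P$ is adapted to $\nu$ (Lemma~\ref{lemma:N1}\ref{lemma:N1,5}) forces $\cdots \to I(A_1)\to I(A_0)\to \nu(A)\to 0$ to be exact. Writing $K_j$ for its syzygies with $K_0=\nu(A)$, I would exploit the short exact sequences $0\to K_{j+1}\to I(A_j)\to K_j\to 0$ and the fact that $I$ is adapted to $\nu^-$ (Lemma~\ref{lemma:N1}\ref{lemma:N1,6}) to obtain the dimension-shift isomorphisms $R^i\nu^-(\nu(A))\cong R^{i+j}\nu^-(K_j)$ for all $i\geq 1$ and $j\geq 0$. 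Choosing $j\geq m$ and invoking Iwanaga-Gorensteinness, the right-hand side vanishes, so condition~\ref{Gorenstein objects for comonad with Nakayama functor description:2} of Proposition~\ref{Gorenstein objects for comonad with Nakayama functor description} holds.

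Next I would verify condition~\ref{Gorenstein objects for comonad with Nakayama functor description:3}, that $\lambda_A$ is an isomorphism. The vanishing just obtained also gives $R^1\nu^-(K_{j+1})=0$ for every $j\geq 0$, so each sequence $0\to K_{j+1}\to I(A_j)\to K_j\to 0$ stays short exact after $\nu^-$. Using that $\nu^-|_{\im I}$ inverts $\nu|_{\im P}$ (Lemma~\ref{lemma:N1}\ref{lemma:N1,1}), the canonical isomorphism $P(A_j)\cong \nu^-I(A_j)$ given by $\lambda_{P(A_j)}$ lets me splice these to produce an exact sequence $\cdots \to P(A_1)\to P(A_0)\to \nu^-\nu(A)\to 0$. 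Naturality of $\lambda$ delivers a morphism of resolutions from the original resolution of $A$ to this new one which equals $\lambda_{P(A_j)}$ on each term; since those are isomorphisms, the five lemma forces $\lambda_A$ to be an isomorphism, and Proposition~\ref{Gorenstein objects for comonad with Nakayama functor description} then yields $A\in\relGproj{P}{\cA}$.

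The delicate point I anticipate is running the two dimension shifts in the right order: the first shift uses Iwanaga-Gorensteinness to annihilate $R^i\nu^-(\nu(A))$, and only with that vanishing in hand does the second shift --- which rebuilds a resolution of $\nu^-\nu(A)$ by applying $\nu^-$ to the $\nu$-image of the chosen resolution of $A$ --- become available to compare with the original resolution and yield the isomorphism $\lambda_A$. Once this order is respected, Lemma~\ref{lemma:N1} supplies all remaining identifications and the argument closes.
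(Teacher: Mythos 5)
Your proposal is correct and follows essentially the same route as the paper: reduce to verifying conditions (ii) and (iii) of Proposition \ref{Gorenstein objects for comonad with Nakayama functor description}, take a $P$-projective resolution of $A$ which stays exact after applying $\nu$, dimension-shift past $m$ to kill $R^i\nu^-(\nu(A))$ and the higher syzygy obstructions, and then compare the original resolution with $\nu^-\nu$ applied to it via the $\lambda_{Q_i}$ to force $\lambda_A$ to be an isomorphism. The only stylistic difference is that you splice short exact sequences and invoke the five lemma where the paper assembles the commutative ladder of exact rows directly, which is the same argument.
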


\begin{proof}
If $A \in \relGproj{P}{\cA}$, then $L_i\nu(A)=0$ for all $i>0$ by Proposition \ref{Gorenstein objects for comonad with Nakayama functor description}. For the converse, fix a number $m$ such that $L_i\nu=0$ and $R^i\nu^-=0$ for all $i>m$, and note that by Proposition \ref{Gorenstein objects for comonad with Nakayama functor description}  we only need to show that $R^i\nu^-(\nu(A))=0$ and $\lambda_A\colon A\to \nu^-\nu(A)$ is an isomorphism. To this end, choose an exact sequence
\[
\cdots \xrightarrow{s_{3}}Q_{2}\xrightarrow{s_{2}}Q_{1}\xrightarrow{s_{1}}A\to 0
\]
with $Q_{i}$ being $P$-projective. Applying $\nu$ gives an exact sequence
\[
\cdots \xrightarrow{\nu(s_{3})}\nu(Q_{2})\xrightarrow{\nu(s_{2})}\nu(Q_{1})\xrightarrow{\nu(s_{1})}\nu(A)\to 0
\] 
since $L_i\nu(A)=0$ for all $i>0$. Also, since $\nu(Q_i)$ is $I$-injective, it follows by dimension shifting that $R^i\nu^-(\nu(A))\cong R^{i+m}\nu^-(\Ker \nu(s_m))=0$ and $R^i\nu^-(\Ker \nu(s_j))\cong R^{i+m}\nu^-(\Ker \nu(s_{j+m}))=0$ for all $i>0$ and $j>0$. Therefore, we have a commutative diagram 
\[
\begin{tikzpicture}[description/.style={fill=white,inner sep=2pt}]
\matrix(m) [matrix of math nodes,row sep=3.5em,column sep=3.5em,text height=1.5ex, text depth=0.25ex] 
{ \cdots & Q_{2} & Q_{1} & A & 0 \\
  \cdots & \nu^-\nu(Q_{2}) & \nu^-\nu(Q_{1}) & \nu^-\nu(A) & 0\\};
\path[->]
(m-1-1) edge node[auto] {$s_{3}$} 	    													   	 	  (m-1-2)
(m-1-2) edge node[auto] {$s_{2}$} 	    																	(m-1-3)
(m-1-3) edge node[auto] {$s_{1}$} 	    													 		  	(m-1-4)
(m-1-4) edge node[auto] {$$} 	    													 	    				(m-1-5)
(m-2-1) edge node[auto] {$\nu^-\nu(s_{3})$} 	    									(m-2-2)
(m-2-2) edge node[auto] {$\nu^-\nu(s_{2})$} 	    									(m-2-3)
(m-2-3) edge node[auto] {$\nu^-\nu(s_{1})$} 	    									(m-2-4)
(m-2-4) edge node[auto] {$$} 	    														    				(m-2-5)
(m-1-2) edge node[auto] {$\lambda_{Q_{2}}$} 	    								   					  (m-2-2)
(m-1-3) edge node[auto] {$\lambda_{Q_{1}}$} 	    									  (m-2-3)
(m-1-4) edge node[auto] {$\lambda_{A}$} 	    								 						  	(m-2-4);	
\end{tikzpicture}
\]
where the rows are exact. Hence, the morphism $\lambda_{A}\colon A\to \nu^-\nu(A)$ is an isomorphism. This proves part $\ref{thm:N1,1}$. Part $\ref{thm:N1,2}$ is proved dually.
\end{proof}

It follows from Theorem \ref{thm:N1} that if $P$ is Iwanaga-Gorenstein, then 
\[
\dim_{\relGproj{P}{\cA}}(\cA)<\infty \quad \text{and} \quad  \dim_{\relGinj{I}{\cA}}(\cA)<\infty.
\]

Our goal is to prove that when $P$ is Iwanaga-Gorenstein the following numbers are equal:
\begin{itemize}
	\item[1)] $\dim_{\relGproj{P}{\cA}}(\cA)$;
	\item[2)] $\dim_{\relGinj{I}{\cA}}(\cA)$;
	\item[3)] The smallest integer $s\geq 0$ such that $L_i\nu(A)=0$ for all $i>s$ and $A\in \cA$;
	\item[4)] The smallest integer $t\geq 0$ such that $R^i\nu^-(A)=0$ for all $i>t$ and $A\in \cA$.
\end{itemize}
We also show that if the numbers in 1) or 2) are finite, then $P$ is Iwanaga-Gorenstein. This generalizes the finite-dimensional case, see \cite[Proposition 3.1 part b)]{AR91a} and \cite[Proposition 4.2]{AR91a}. It is also analogous to other results in Gorenstein homological algebra, see for example \cite[Theorem 2.28]{EEG08}.

In order to prove this we need some preparation. 

\begin{Lemma}\label{lemma:N3}
Let $A\in \cA$. The following holds:
\begin{enumerate}
	\item\label{lemma:N3,1}  $A\cong \nu(A')$ for some $A'\in \cA$ if and only if there exists an exact sequence $J_{0}\to J_1\to A\to 0$ with $J_0$ and $J_1$ being $I$-injective;
	\item\label{lemma:N3,2}  $A\cong \nu^-(A')$ for some $A'\in \cA$ if and only if there exists an exact sequence $0\to A\to Q_{0}\to Q_1$ with $Q_0$ and $Q_1$ being $P$-projective.
\end{enumerate}
\end{Lemma}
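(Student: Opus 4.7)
The plan is to prove (i) directly and derive (ii) by the opposite-category duality noted after Lemma \ref{lemma:N1}: in $\cA\op$, the role of $(P,\nu,I,\nu^-)$ is played by $(I,\nu^-,P,\nu)$, so statement (ii) for $\cA$ is exactly statement (i) applied in $\cA\op$.

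For the forward direction of (i), suppose $A \cong \nu(A')$. Since $P$ is generating, I construct an exact sequence $P(X_0) \xrightarrow{h} P(X_1) \to A' \to 0$ by taking an epimorphism $P(X_1) \twoheadrightarrow A'$ and then an epimorphism from some $P(X_0)$ onto its kernel. Applying the right-exact functor $\nu$ (which is a left adjoint) yields the required $I(X_0) \to I(X_1) \to A \to 0$, with $I(X_i)$ being $I$-injective.

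For the converse, fix an exact sequence $J_0 \xrightarrow{f} J_1 \xrightarrow{g} A \to 0$ with $J_0, J_1$ being $I$-injective. Choose $X_0, X_1 \in \cA$ and splittings $\iota_i \colon J_i \hookrightarrow I(X_i)$, $\pi_i \colon I(X_i) \twoheadrightarrow J_i$ with $\pi_i\iota_i = 1_{J_i}$, and set $J_1' := \Ker \pi_1$. The naive lift $\iota_1 f \pi_0 \colon I(X_0) \to I(X_1)$ already lies in $\im I$, but its cokernel is $A \oplus J_1'$, so this map alone is insufficient. To absorb the extraneous summand $J_1'$, I enlarge the source and define
\[
\psi \colon I(X_0) \oplus I(X_1) \longrightarrow I(X_1), \qquad \psi(a,b) = \iota_1 f \pi_0(a) + (1_{I(X_1)} - \iota_1\pi_1)(b).
\]
A direct computation gives $\im \psi = \iota_1(\im f) \oplus J_1'$, whence $\Coker(\psi) \cong J_1/\im f \cong A$. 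Since $I(X_0) \oplus I(X_1) = I(X_0 \oplus X_1)$ by additivity of $I$, the map $\psi$ is a morphism in $\im I$, and by Lemma \ref{lemma:N1}\,\ref{lemma:N1,1} there is a unique $h \colon P(X_0 \oplus X_1) \to P(X_1)$ with $\nu(h) = \psi$. Setting $A' := \Coker(h)$ and applying right-exactness of $\nu$ gives $\nu(A') \cong \Coker(\psi) \cong A$.

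The main obstacle is exhibiting the correct $\psi$: the obvious map $\iota_1 f \pi_0$ has the wrong cokernel because of the surplus summand $J_1'$ of $I(X_1)$, and the trick of enlarging the source by a second copy of $I(X_1)$ to kill off that summand is what simultaneously makes the cokernel computation work and keeps $\psi$ in $\im I$ so that Lemma \ref{lemma:N1}\,\ref{lemma:N1,1} supplies the needed lift to $\im P$.
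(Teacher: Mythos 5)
Your proof is correct, and the converse direction of (i) takes a genuinely different route from the paper's. The paper's converse argument is a one-liner: since the $J_i$ are $I$-injective, the counit components $\sigma_{J_i}\colon\nu\nu^-(J_i)\to J_i$ are isomorphisms by Lemma \ref{lemma:N1} part \ref{lemma:N1,2}, so $f$ and $\nu\nu^-(f)$ agree up to isomorphism in the arrow category and
\[
A=\Coker f\cong\Coker\nu\nu^-(f)\cong\nu(\Coker\nu^-(f))
\]
by right-exactness of $\nu$; one then takes $A'=\Coker\nu^-(f)$. You instead construct by hand a morphism $\psi$ between objects of the form $I(-)$ whose cokernel is $A$, and lift it through the category isomorphism $\nu\colon\im P\xrightarrow{\cong}\im I$ of Lemma \ref{lemma:N1} part \ref{lemma:N1,1} to get $h$ with $A'=\Coker h$. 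Your enlargement of the source by $I(X_1)$, so that the idempotent $1-\iota_1\pi_1$ absorbs the complementary summand $J_1'=\Ker\pi_1$, is a necessary ingredient: as you note, the naive lift $\iota_1 f\pi_0$ has cokernel $A\oplus J_1'$, and being a direct summand of an object in the image of $\nu$ does not by itself put $A$ in that image. The paper's proof buys brevity by exploiting the counit isomorphism directly, whereas yours gives a more explicit recipe for $A'$ at the cost of some bookkeeping; both are valid. Your forward direction and your reduction of (ii) to (i) via passage to the opposite category coincide with the paper's.
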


\begin{proof}
For any object $A'\in \cA$ choose an exact sequence $Q_0\to Q_1\to A'\to 0$ with $Q_0$ and $Q_1$ being $P$-projective. By applying $\nu$ and using that it is right exact and sends $P$-projective objects to $I$-injective objects, we get one direction of part $\ref{lemma:N3,1}$. For the converse, assume we have an exact sequence $J_{0}\xrightarrow{s} J_1\to A\to 0$ with $J_0$ and $J_1$ being $I$-injective. Since $\sigma_{J_i}\colon \nu\nu^-(J_i)\to J_i$ is an isomorphism, it follows that 
\[
A=\Coker s\cong \Coker \nu\nu^-(s)\cong \nu(\Coker \nu^-(s)). 
\]
This proves part $\ref{lemma:N3,1}$. Part $\ref{lemma:N3,2}$ is proved dually.
\end{proof}

Let $\Omega:= \Ker \counit{P}{I}\colon \cA\to \cA$ and $\Sigma := \Coker \unit{T}{P}\colon \cA\to \cA$. We then have exact sequences 
\begin{align*}
& 0\to \Omega(A)\to PI(A)\xrightarrow{\counit{P}{I}_A}A\to 0 \\
&  A\xrightarrow{\unit{T}{P}_A}PT(A)\to \Sigma(A)\to 0
\end{align*} By Proposition \ref{Conjugate units and counits} and Proposition \ref{prop:1.65} we have that $\Sigma \dashv \Omega$. Hence, for $r\geq 0$ the functor $\Omega^r\circ \nu^-$ is right adjoint to $\nu\circ \Sigma^r$.
 
\begin{Lemma}\label{lemma:N4}
Let $A\in \cA$ and $r\geq 0$ be an integer. The following holds:
\begin{enumerate}
	\item\label{lemma:N4,1} If $A\cong \nu\Sigma^r(A')$ for an object $A'\in \cA$, then there exists an exact sequence
	\[
	J_{0}\to J_1\to \cdots \to J_{r+1}\to A\to 0
	\]
	with $J_i$ being $I$-injective;
	\item\label{lemma:N4,2} If $A\in \relGproj{P}{\cA}$, then $\nu\Sigma^r(A)\in \relGinj{I}{\cA}$;
	\item\label{lemma:N4,3} If $A\in \relGinj{I}{\cA}$, then $\Omega^r\nu^-(A)\in \relGproj{P}{\cA}$.
	
\end{enumerate}
\end{Lemma}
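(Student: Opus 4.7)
My plan is to prove the three parts in the order (iii), (i), (ii), since (ii) requires an auxiliary preservation claim that the other two do not.

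For (iii) I would induct on $r$. The base case $r=0$ is Proposition \ref{1:prop:N1}(ii). For the inductive step, put $X:=\Omega^{r}\nu^{-}(A)$, which lies in $\relGproj{P}{\cA}$ by hypothesis. The short exact sequence $0\to\Omega(X)\to PI(X)\to X\to 0$ coming from the counit of $P\dashv I$ has middle term $P$-projective and right term Gorenstein $P$-projective, so since $\relGproj{P}{\cA}$ is resolving by Proposition \ref{resolving and coresolving} it is closed under the kernel, giving $\Omega^{r+1}\nu^{-}(A)=\Omega(X)\in\relGproj{P}{\cA}$.

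For (i) I again induct on $r$. In the base case, since $P$ is generating there exists a presentation $P(Z)\to P(Y)\to A'\to 0$, and applying the right exact functor $\nu$ together with $\nu\circ P=I$ yields the required $I(Z)\to I(Y)\to\nu(A')\to 0$. For the step, apply $\nu$ to the defining right exact sequence $\Sigma^{r}(A')\to PT\Sigma^{r}(A')\to\Sigma^{r+1}(A')\to 0$ to obtain the right exact sequence $\nu\Sigma^{r}(A')\to IT\Sigma^{r}(A')\to \nu\Sigma^{r+1}(A')\to 0$, in which $IT\Sigma^{r}(A')$ is $I$-injective; splicing this at $\nu\Sigma^{r}(A')$ with the length-$(r{+}2)$ resolution of $\nu\Sigma^{r}(A')$ supplied by the inductive hypothesis produces the required length-$(r{+}3)$ resolution of $\nu\Sigma^{r+1}(A')$.

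For (ii) the strategy is to first establish the claim that $\Sigma$ sends $\relGproj{P}{\cA}$ into itself. Granted the claim, iterating yields $\Sigma^{r}(A)\in\relGproj{P}{\cA}$, whence $\nu\Sigma^{r}(A)=\nu(\Sigma^{r}(A))\in\relGinj{I}{\cA}$ by Proposition \ref{1:prop:N1}(i). To prove the claim, given $A\in\relGproj{P}{\cA}$ I would apply Lemma \ref{Cokernels of Gorenstein relative projectives}(i) to the right exact sequence $A\xrightarrow{\unit{T}{P}_{A}} PT(A)\to\Sigma(A)\to 0$, whose first two terms are Gorenstein $P$-projective; it then suffices to show that $\nu(\unit{T}{P}_{A})$ is a monomorphism. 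Truncating a complete $P$-projective resolution of $A$ furnishes a monomorphism $\iota\colon A\hookrightarrow P(X)$ whose cokernel is again in $\relGproj{P}{\cA}$, so the vanishing of $L_{1}\nu$ on that cokernel makes $\nu(\iota)$ monic; meanwhile the triangle identity for $T\dashv P$ shows that $\unit{T}{P}_{P(X)}$ is split monic, and hence so is $\nu(\unit{T}{P}_{P(X)})$. Naturality of $\unit{T}{P}$ yields $\nu PT(\iota)\circ\nu(\unit{T}{P}_{A})=\nu(\unit{T}{P}_{P(X)})\circ\nu(\iota)$; the right-hand composite is monic, forcing $\nu(\unit{T}{P}_{A})$ to be monic as required. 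I expect this preservation claim to be the main obstacle: because $\Sigma$ is only a cokernel with $\unit{T}{P}$ not pointwise monic in general, one cannot read off preservation of $\relGproj{P}{\cA}$ directly from the resolving structure as one does for $\Omega$ in (iii), and the detour through a complete $P$-projective resolution is what delivers the needed $\nu$-monicity.
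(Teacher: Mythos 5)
Your parts (ii) and (iii) are essentially sound, but part (i) has a genuine gap in the inductive step. When you splice the exact sequence $J_0\to\cdots\to J_{r+1}\xrightarrow{f}\nu\Sigma^r(A')\to 0$ with the right exact sequence $\nu\Sigma^r(A')\xrightarrow{g}IT\Sigma^r(A')\to\nu\Sigma^{r+1}(A')\to 0$ by composing $g\circ f$, the result is automatically exact at $IT\Sigma^r(A')$ and $\nu\Sigma^{r+1}(A')$, but exactness at $J_{r+1}$ demands $\Ker(g\circ f)=\Ker f$. Since $f$ is epic, $\Ker(g\circ f)=f^{-1}(\Ker g)$ equals $\Ker f$ if and only if $g=\nu(\unit{T}{P}_{\Sigma^r(A')})$ is a monomorphism. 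You never establish this for arbitrary objects $\Sigma^r(A')$; the monicity you derive in part (ii) via truncating a complete $P$-projective resolution is only available for Gorenstein $P$-projective $A$, which these are not.

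The missing observation, which is the heart of the paper's proof, is that $\nu(\unit{T}{P})$ is a pointwise monomorphism \emph{for every object}. Indeed, the triangle identity for $T\dashv P$ makes $T(\unit{T}{P})\colon T\to TPT$ a split monomorphism; rewriting $T=P\circ\nu$ gives $T(\unit{T}{P})=P\nu(\unit{T}{P})$, and since $P$ is additive and faithful (Lemma \ref{lemma:N1}) it reflects monomorphisms, so $\nu(\unit{T}{P})$ is monic. (You note that $\unit{T}{P}$ itself may fail to be pointwise monic, which is correct, but the relevant object is $\nu(\unit{T}{P})$.) With this in hand, the paper constructs the resolution directly by applying $\nu$ to the spliced sequence $A'\to PT(A')\to PT\Sigma(A')\to\cdots\to\Sigma^r(A')\to 0$, which then stays exact; there is no need for your induction. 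Moreover, the same fact renders your detour in part (ii) unnecessary: Lemma \ref{Cokernels of Gorenstein relative projectives}(i) applies immediately to $A\xrightarrow{\unit{T}{P}_A}PT(A)\to\Sigma(A)\to 0$ for any $A\in\relGproj{P}{\cA}$, since $\nu(\unit{T}{P}_A)$ is monic unconditionally, giving $\Sigma(A)\in\relGproj{P}{\cA}$ directly and making the complete-resolution argument superfluous. Part (iii) as you wrote it matches the paper.
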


\begin{proof}
We prove $\ref{lemma:N4,1}$. Consider the sequence
\begin{multline}\label{equation:N1}
A'\xrightarrow{\unit{T}{P}_{A'}} PT(A') \xrightarrow{s_0} PT\Sigma(A')\xrightarrow{s_1} \cdots \xrightarrow{s_{r-1}} PT\Sigma^{r-1}(A') \\
\xrightarrow{p_{r-1}} \Sigma^r(A')\to 0
\end{multline}
where $p_i$ is the canonical projection and $s_i$ is the composite 
\[
PT\Sigma^i(A')  
\xrightarrow{p_i} \Sigma^{i+1}(A')\xrightarrow{\unit{T}{P}_{\Sigma^{i+1}(A')}}PT\Sigma^{i+1}(A').
\]
Note that $T(\unit{T}{P})\colon T\to T\circ P\circ T$ is a split monomorphism by the triangle identities. Since $T=P\circ \nu$ and $P$ is faithful by Lemma \ref{lemma:N1}, it follows that $\nu(\unit{T}{P})\colon \nu \to \nu\circ P\circ T$ is a monomorphism. Hence, applying $\nu$ to \eqref{equation:N1} gives an exact sequence
\begin{multline*}
0\to \nu(A')\xrightarrow{\nu(\unit{T}{P}_{A'})} IT(A') \xrightarrow{\nu(s_0)} IT\Sigma(A')\xrightarrow{\nu(s_1)} \cdots \xrightarrow{\nu(s_{r-1})} IT\Sigma^{r-1}(A') \\
\xrightarrow{} A\to 0.
\end{multline*}
This proves part $\ref{lemma:N4,1}$. Finally, note that if $A\in \relGproj{P}{\cA}$, then $\Omega(A)\in \relGproj{P}{\cA}$ since $\relGproj{P}{\cA}$ is resolving, and $\Sigma(A)\in \relGproj{P}{\cA}$ by Lemma \ref{Cokernels of Gorenstein relative projectives}. Since $\nu(A)\in \relGinj{I}{\cA}$ and $\nu^-(A')\in \relGproj{P}{\cA}$ for $A\in \relGproj{P}{\cA}$ and $A'\in \relGinj{I}{\cA}$ by Proposition \ref{1:prop:N1}, we get part $\ref{lemma:N4,2}$ and $\ref{lemma:N4,3}$.
\end{proof}

We now prove the main result of this subsection.

\begin{Theorem}\label{thm:N2}
The following are equivalent:
\begin{enumerate}
	\item\label{thm:N2,1} $P$ is Iwanaga-Gorenstein;
	\item\label{thm:N2,2} $\dim_{\relGproj{P}{\cA}}(\cA)< \infty$;
	\item\label{thm:N2,3} $\dim_{\relGinj{I}{\cA}}(\cA)< \infty$.
\end{enumerate}
Moreover, if this holds, then the following numbers coincide:
\begin{enumerate}[label=(\alph*)]
\item\label{thm:N2,1,1} $\dim_{\relGproj{P}{\cA}}(\cA)$;
\item\label{thm:N2,1,2} $\dim_{\relGinj{I}{\cA}}(\cA)$;
\item\label{thm:N2,1,3} The smallest integer $s$ such that $L_i\nu(A)=0$ for all $i>s$ and $A\in \cA$;
\item\label{thm:N2,1,4} The smallest integer $t$ such that $R^i\nu^-(A)=0$ for all $i>t$ and $A\in \cA$.
\end{enumerate}
\end{Theorem}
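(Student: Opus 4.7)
The plan is to prove the theorem by establishing the chain
\[
(3) \leq (1) \leq (4) \leq (2) \leq (3),
\]
from which all four numbers agree whenever any is finite, and then to tie this to the equivalence of \textup{(a)}, \textup{(b)}, \textup{(c)}.

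First I would prove the easy inequalities $(3) \leq (1)$ and dually $(4) \leq (2)$. Given $\alpha := \dim_{\relGproj{P}{\cA}}(\cA)$ finite, every $A$ admits a Gorenstein $P$-projective resolution of length $\leq \alpha$, and by Proposition \ref{Gorenstein objects for comonad with Nakayama functor description} each term has $L_j\nu = 0$ for $j > 0$; a standard dimension-shift along the resolution then yields $L_j\nu(A) = 0$ for $j > \alpha$, proving $s \leq \alpha$. The inequality $t \leq \beta$ follows by the dual argument using Proposition \ref{dual Gorenstein objects for comonad with Nakayama functor} and an $I$-injective coresolution.

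Next I would handle \textup{(a)} $\Rightarrow$ \textup{(b)}, \textup{(c)}. If $P$ is Iwanaga-Gorenstein with bound $m$, then for any $A$ and any $P$-projective resolution the $m$-th syzygy $K$ satisfies $L_j\nu(K) \cong L_{j+m}\nu(A) = 0$ for all $j > 0$; Theorem \ref{thm:N1} then gives $K \in \relGproj{P}{\cA}$, so $\alpha \leq m$, and dually $\beta \leq m$.

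The substantive step is $(1) \leq (4)$, and dually $(2) \leq (3)$. Assuming $t$ finite, for any $A$ I would take a $P$-projective resolution, let $K$ be the $t$-th syzygy, and verify $K \in \relGproj{P}{\cA}$ via the three conditions of Proposition \ref{Gorenstein objects for comonad with Nakayama functor description}. The key tool for condition \textup{(ii)} ($R^i\nu^-(\nu(K)) = 0$ for $i > 0$) is Lemma \ref{lemma:N4}, which produces for every $r$ an exact sequence
\[
0 \to \nu(K) \to IT(K) \to IT\Sigma(K) \to \cdots \to IT\Sigma^{r-1}(K) \to \nu\Sigma^r(K) \to 0
\]
with $I$-injective (hence $R\nu^-$-acyclic) interior. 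Applying $\nu^-$, using $\nu^- I = P$ and that $\lambda_P$ is an isomorphism, the computation of $R^i\nu^-(\nu(K))$ reduces to cokernels of comparison maps $\lambda_{\Sigma^i(K)}$, which the hypothesis $t < \infty$ forces to vanish for $i > 0$ after choosing $r$ large enough. Condition \textup{(i)} follows by dimension-shifting $L\nu$ through the $P$-projective resolution (using that $K$ is the $t$-th syzygy), and \textup{(iii)} follows by comparing $\lambda$ with the sequence above. The opposite-category argument yields $\beta \leq s$, i.e., $(2) \leq (3)$.

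Combining the three steps gives $s \leq \alpha \leq t \leq \beta \leq s$, so all four numbers coincide; each is finite exactly when the others are, which in turn is equivalent to $P$ being Iwanaga-Gorenstein. The main obstacle, I expect, is the substantive step: although $t < \infty$ immediately bounds $R^i\nu^-$ uniformly for $i > t$, the verification that $R^i\nu^-(\nu(K)) = 0$ for every $i > 0$ requires invoking the finer structural data of the Nakayama functor through Lemma \ref{lemma:N4}, trading a uniform degree bound for a bound on the length parameter $r$.
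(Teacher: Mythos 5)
The easy inequalities $s\le\alpha$, $t\le\beta$ and the implication (a)$\Rightarrow$(b),(c) via Theorem \ref{thm:N1} are fine and match the paper. However, the link in your chain labelled ``$(1)\le(4)$'' --- namely $\dim_{\relGproj{P}{\cA}}(\cA)\le t$ assuming only that $t<\infty$ --- has a genuine gap, and in fact cannot be proved with the tools available. You propose to check that the $t$th syzygy $K$ of an arbitrary $A$ lies in $\relGproj{P}{\cA}$ by verifying the three conditions of Proposition \ref{Gorenstein objects for comonad with Nakayama functor description}, and you say condition (i), that $L_i\nu(K)=0$ for $i>0$, ``follows by dimension-shifting $L\nu$ through the $P$-projective resolution.'' But dimension shifting only gives $L_i\nu(K)\cong L_{i+t}\nu(A)$; to conclude this vanishes you would need $L_j\nu(A)=0$ for $j>t$, i.e.\ $s\le t$. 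The number $t$ bounds $R^i\nu^-$, not $L_i\nu$, so this is exactly what is \emph{not} known under the hypothesis $t<\infty$ alone.

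This is not merely a technical slip: your chain $s\le\alpha\le t\le\beta\le s$, if established, would show that $t<\infty$ alone forces $s<\infty$. Specialised to Example \ref{One algebra} with $k$ a field, $s<\infty$ and $t<\infty$ are the conditions $\pdim D(\Lambda)_\Lambda<\infty$ and $\pdim{}_\Lambda D(\Lambda)<\infty$ (equivalently, finiteness of the two one-sided self-injective dimensions of $\Lambda$), and the implication ``one finite $\Rightarrow$ the other finite'' is the Gorenstein symmetry conjecture, which is open. So the proposed chain proves too much.

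The paper avoids this trap by never passing through $s$ or $t$ alone: it shows directly that $\dim_{\relGproj{P}{\cA}}(\cA)\le n$ if and only if $\dim_{\relGinj{I}{\cA}}(\cA)\le n$ for every $n$, via an induction/case analysis ($n\ge 2$, $n=1$, $n=0$) using the adjunction $\nu\circ\Sigma^{r}\dashv\Omega^{r}\circ\nu^-$ and Lemma \ref{lemma:N4}. The hypothesis $\dim_{\relGinj{I}{\cA}}(\cA)\le n$ is strictly stronger than $t\le n$ --- it provides finite Gorenstein $I$-injective \emph{coresolutions} for all objects, which is what is actually needed to control the $L\nu$-acyclicity of syzygies (via the direct summand argument involving the triangle identities). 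Once the equivalence (b)$\Leftrightarrow$(c) is in place, Theorem \ref{thm:N1} and your easy inequalities do give the numerical equalities, so that portion of your plan can be salvaged; the substantive step needs to be recast at the level of the Gorenstein dimensions themselves.
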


If this common number is $n$, we say that $P$ is $n$-\emphbf{Gorenstein}.

\begin{proof}
We prove that $\dim_{\relGproj{P}{\cA}}(\cA)=\dim_{\relGinj{I}{\cA}}(\cA)$ by showing that
\[
\dim_{\relGproj{P}{\cA}}(\cA)\leq n \quad \text{if and only if} \quad \dim_{\relGinj{I}{\cA}}(\cA)\leq n
\]
for any number $n\geq 0$. This together with Theorem \ref{thm:N1} shows the equivalence of statement $\ref{thm:N2,1}$, $\ref{thm:N2,2}$, and $\ref{thm:N2,3}$, as well as the equality of the numbers in $\ref{thm:N2,1,1}$, $\ref{thm:N2,1,2}$, $\ref{thm:N2,1,3}$, and $\ref{thm:N2,1,4}$. 

First assume $n\geq 2$ and $\dim_{\relGinj{I}{\cA}}(\cA)\leq n$. Let $A\in \cA$ and consider the exact sequence
\begin{multline*}
0\to \Omega^n(A)\xrightarrow{i_n} PI\Omega^{n-1}(A)\xrightarrow{s_{n-1}} \cdots \\
\cdots \xrightarrow{s_3} PI\Omega^2(A) \xrightarrow{s_2} PI\Omega(A) \xrightarrow{s_1} PI(A)\xrightarrow{\counit{P}{I}_A} A\to 0
\end{multline*}
where $s_j$ is the composition
\[
PI\Omega^{j}(A) \xrightarrow{\counit{P}{I}_{\Omega^{j}(A)}} \Omega^{j}(A) \xrightarrow{i_j} PI\Omega^{j-1}(A)
\]
and $i_j$ is the inclusion. By Lemma \ref{lemma:N3} part $\ref{lemma:N3,2}$ there exists an object $A'\in \cA$ such that $\Omega^{2}(A)\cong \nu^-(A')$. This implies that 
\[
\Omega^n(A)\cong \Omega^{n-2}\nu^-(A').
\]
By Lemma \ref{lemma:N4} part $\ref{lemma:N4,1}$ and our assumption we know that $\nu \Sigma^{n-2}(A'')\in \relGinj{I}{\cA}$ for all $A''\in \cA$. Hence, by Lemma \ref{lemma:N4} part $\ref{lemma:N4,3}$ it follows that 
\[
\Omega^{n-2}\nu^-\nu\Sigma^{n-2}\Omega^{n-2}\nu^-(A')\in \relGproj{P}{\cA}.
\] 
By the triangle identities for $\nu \circ \Sigma^{n-2}\dashv \Omega^{n-2}\circ \nu^-$, we get that $\Omega^n(A)\cong \Omega^{n-2}\nu^-(A')$ is a direct summand of $\Omega^{n-2}\nu^-\nu\Sigma^{n-2}\Omega^{n-2}\nu^-(A')$. It follows that 
\[
\Omega^n(A)\in\relGproj{P}{\cA}
\]
since $\relGproj{P}{\cA}$ is closed under direct summands. This shows that 
\[
\dim_{\relGproj{P}{\cA}}(\cA)\leq n.
\]

Now assume $\dim_{\relGinj{I}{\cA}}(\cA)\leq 1$. By the argument above we know that $\dim_{\relGproj{P}{\cA}}(\cA)\leq 2$. Let $A\in \cA$ by arbitrary, and choose an exact sequence
\[
0\to \Ker s\xrightarrow{i} Q_{0}\xrightarrow{s} Q_1\xrightarrow{p} A\to 0
\]
with $Q_0,Q_1$ being $P$-projective. Since $\dim_{\relGproj{P}{\cA}}(\cA)\leq 2$, we get that $\Ker s\in \relGproj{P}{\cA}$. Consider the exact sequence $0\to \Ker s \xrightarrow{i}Q_0\xrightarrow{q} \im s\to 0$. Applying $\nu$ to this gives an exact sequence
\[
\nu(\Ker s) \xrightarrow{\nu(i)}\nu(Q_0)\xrightarrow{\nu(q)} \nu(\im s)\to 0.
\] 
Hence, we have an epimorphism $\nu(\Ker s)\xrightarrow{p'} \Ker \nu(q)\to 0$. Since $\nu(\Ker s)\in \relGinj{I}{\cA}$ and $\dim_{\relGinj{I}{\cA}}(\cA)\leq 1$, it follows that $\Ker \nu(q)\in \relGinj{I}{\cA}$. Furthermore, we have a commutative diagram 
\[
\begin{tikzpicture}[description/.style={fill=white,inner sep=2pt}]
\matrix(m) [matrix of math nodes,row sep=2.5em,column sep=5.0em,text height=1.5ex, text depth=0.25ex] 
{ \Ker s & Q_0 & Q_1 \\
  \nu^-\nu(\Ker s) &  \nu^-\nu(Q_0)   & \nu^-\nu(Q_1)  \\};
\path[->]
(m-1-1) edge node[auto] {$i$} 	    													   	  				(m-1-2)
(m-1-2) edge node[auto] {$s$} 	    																				(m-1-3)
(m-2-1) edge node[auto] {$\nu^-\nu(i)$} 	    													    (m-2-2)
(m-2-2) edge node[auto] {$\nu^-\nu(s)$} 	    															(m-2-3)
(m-1-1) edge node[auto] {$\lambda_{\Ker s}$} 	    								   				(m-2-1)
(m-1-2) edge node[auto] {$\lambda_{Q_0}$} 	    									  				(m-2-2)
(m-1-3) edge node[auto] {$\lambda_{Q_1}$} 	    								 						(m-2-3);	
\end{tikzpicture}
\] 
The vertical morphisms are isomorphisms by Proposition \ref{1:prop:N1} part $\ref{1:prop:N1,3}$. Hence, the morphism $\nu^-\nu(\Ker s)\xrightarrow{\nu^-\nu(i)}\nu^-\nu(Q_0)$ is the kernel of $\nu^-\nu(s)$. In particular, it is a monomorphism. On the other hand, $\nu^-\nu(i)$ is also equal to the composition 
\[
\nu^-\nu(\Ker s)\xrightarrow{\nu^-(p')}\nu^-(\Ker \nu(q))\xrightarrow{\nu^-(j)}\nu^-\nu(Q_0)
\]
where $j\colon \Ker \nu(q)\to \nu(Q_0)$ is the inclusion. Since $\nu^-\nu(s)\circ \nu^-(j) =0$ and $\nu^-(j)$ is a monomorphism, it follows that $\nu^-(p')$ is an isomorphism. Now consider the commutative diagram

\[
\begin{tikzpicture}[description/.style={fill=white,inner sep=2pt}]
\matrix(m) [matrix of math nodes,row sep=2.5em,column sep=5.0em,text height=1.5ex, text depth=0.25ex] 
{ \nu\nu^-\nu(\Ker s) & \nu\nu^-(\Ker \nu(q)) \\
  \nu(\Ker s) &  \Ker \nu(q)   \\};
\path[->]
(m-1-1) edge node[auto] {$\nu\nu^-(p')$} 	    													   	  	(m-1-2)
(m-2-1) edge node[auto] {$p'$} 	    													    							(m-2-2)
   									
(m-1-1) edge node[auto] {$\sigma_{\nu(\Ker s)}$} 	    								   				(m-2-1)
(m-1-2) edge node[auto] {$\sigma_{\Ker \nu(q)}$} 	    									  			(m-2-2);	
\end{tikzpicture}
\] 
Since the vertical maps and the upper horizontal map are isomorphisms, it follows that $p'$ is an isomorphism. Hence, the exact sequence
$0\to \Ker s \xrightarrow{i}Q_0\xrightarrow{q} \im s\to 0$ is still exact after one applies $\nu$. By Lemma \ref{Cokernels of Gorenstein relative projectives} it follows that $\im s\in \relGproj{P}{\cA}$. This implies that $\dim_{\relGproj{P}{\cA}}(A)\leq 1$, and since $A$ was arbitrary we get that $\dim_{\relGproj{P}{\cA}}(\cA)\leq 1$.

Finally, we consider the case when $\dim_{\relGinj{I}{\cA}}(\cA)=0$. This implies that $\nu^-$ is exact. Also, $\dim_{\relGproj{P}{\cA}}(\cA)\leq 1$ by the argument above. Let $A\in \cA$ be arbitrary, and choose a right exact sequence
\[
Q_0 \xrightarrow{s} Q_1 \xrightarrow{p} A\to 0
\]
with $Q_0,Q_1$ being $P$-projective. Since $\nu^-$ is exact and $\nu$ is right exact, the sequence $\nu^-\nu(Q_0) \xrightarrow{\nu^-\nu(s)} \nu^-\nu(Q_1) \xrightarrow{\nu^-\nu(p)} \nu^-\nu(A)\to 0$ is exact. Hence we have a commutative diagram 
\[
\begin{tikzpicture}[description/.style={fill=white,inner sep=2pt}]
\matrix(m) [matrix of math nodes,row sep=2.5em,column sep=5.0em,text height=1.5ex, text depth=0.25ex] 
{ Q_0 & Q_1 & A & 0 \\
  \nu^-\nu(Q_0) &  \nu^-\nu(Q_1)   & \nu^-\nu(A) & 0  \\};
\path[->]
(m-1-1) edge node[auto] {$s$} 	    													   	  				(m-1-2)
(m-1-2) edge node[auto] {$p$} 	    																				(m-1-3)
(m-1-3) edge node[auto] {$$} 	    																				  (m-1-4)
(m-2-1) edge node[auto] {$\nu^-\nu(s)$} 	    													    (m-2-2)
(m-2-2) edge node[auto] {$\nu^-\nu(p)$} 	    															(m-2-3)
(m-2-3) edge node[auto] {$$} 	    																				  (m-2-4)
(m-1-1) edge node[auto] {$\lambda_{Q_0}$} 	    								   				  (m-2-1)
(m-1-2) edge node[auto] {$\lambda_{Q_1}$} 	    									  				(m-2-2)
(m-1-3) edge node[auto] {$\lambda_{A}$} 	    								 						  (m-2-3);	
\end{tikzpicture}
\] 
with right exact rows. Since $\lambda_{Q_0}$ and $\lambda_{Q_1}$ are isomorphisms, it follows that $\lambda_A$ is an isomorphism. Since $\dim_{\relGproj{P}{\cA}}(\cA)\leq 1$, it follows by Lemma \ref{lemma:N3} part $\ref{lemma:N3,2}$ that $\nu(A)\in \relGinj{I}{\cA}$, and hence $A\cong \nu^-\nu(A)\in \relGproj{P}{\cA}$. Since $A$ was arbitrary, we get that $\dim_{\relGproj{P}{\cA}}(\cA)=0$. 

The dual of the above argument shows that if $\dim_{\relGproj{P}{\cA}}(\cA)\leq n$, then $\dim_{\relGinj{I}{\cA}}(\cA)\leq n$. Hence, it follows that 
\[
\dim_{\relGproj{P}{\cA}}(\cA)= \dim_{\relGinj{I}{\cA}}(\cA).
\]
and we are done.
\end{proof}

Note that in Example \ref{Complexes} the functor $P=f_!\circ f^*$ is $0$-Gorenstein, and in Example \ref{Morphism category} it is $1$-Gorenstein.

\section{Functor categories}\label{Functor categories section}

Let $k$ be a commutative ring, let $\cB$ be a $k$-linear abelian category, and let be $\cC$ a small, $k$-linear, locally bounded, and Hom-finite category. Our goal in this section is to show that there exists an adjoint pair $i_!\dashv i^*$ with Nakayama functor $\nu$ where 
\[
i^*\colon \cB^{\cC} \to \prod_{c\in \cC}\cB
\]
is the evaluation functor. 

\subsection{Reminder on the Hom and tensor functor}\label{Hom and tensor functor}

Let $k$ be a commutative ring. The \emphbf{tensor product} of two $k$-linear categories $\cD$ and $\cE$ is a $k$-linear category $\cD\otimes \cE$ with objects pairs $(D,E)$ with $D\in \cD$ and $E\in \cE$. The set morphisms between $(D,E)$ and $(D',E')$ is $\cD(D,D')\otimes_k \cE(E,E')$, and composition is given by $(h_1\otimes g_1)\circ (h_2\otimes g_2) = (h_1\circ h_2)\otimes (g_1\circ g_2)$. The identity at $(D,E)$ is $1_{D\otimes E} = 1_D\otimes 1_E$.

Let $\cC$ be a small $k$-linear category. Recall that the Yoneda lemma gives a fully faithful functor $h_{\cC}\colon \cC \to \Md\text{-}\cC$, where $\Md\text{-}\cC$ is the category of right $\cC$-modules. The image $h_{\cC}(c)=\cC(-,c)$ is a projective $\cC$-module. A right $\cC$-module $M$ is called \emphbf{finitely presented} if there exists an exact sequence  
\[
\oplus_{i=1}^m\cC(-,c_i) \to \oplus_{j=1}^n\cC(-,d_j)\to M\to 0 
\]
in $\Md \text{-}\cC$ for objects $c_i, d_j\in \cC$. The category of finitely presented right $\cC$-modules is denoted by $\md \text{-}\cC$. This is an additive category with cokernels. 

Let $\cB$ be a $k$-linear abelian category, and let $\cB^{\cC}$ denote the category of $k$-linear functors from $\cC$ to $\cB$. Since $\cB$ is finitely complete and cocomplete, it follows from chapter 3 in \cite{Kel05} that there exist functors
\begin{align*}
& -\otimes_{\cC}- \colon (\md\text{-} \cC) \otimes \cB^{\cC} \to \cB \\
& \Hom_{\cC}(-.-) \colon (\md\text{-} \cC\op)\op \otimes \cB^{\cC} \to \cB
\end{align*}
satisfying the following: 
\begin{enumerate}
\item\label{Tensor 1} The functor $-\otimes_{\cC} F\colon\md\text{-} \cC \to \cB$ is right exact for all $F\in \cB^{\cC}$.
\item\label{Tensor 2} There exists an isomorphism $\cC(-,c)\otimes_{\cC} F \cong F(c)$ natural in $c$ and $F$.
\item\label{Hom 1} The functor $\Hom_{\cC}(-,F)\colon (\md\text{-} \cC\op)\op \to \cB$ is left exact for all $F\in \cB^{\cC}$
\item\label{Hom 2} There exists an isomorphism $\Hom_{\cC}(\cC(c,-),F)\cong F(c)$ natural in $c$ and $F$.
\end{enumerate}
In \cite{Kel05} $M\otimes_{\cC}F$ and $\Hom_{\cC}(N,F)$ is called the \emphbf{colimit of $F$ indexed by $M$} and the \emphbf{limit of $F$ indexed by $N$}, respectively.  Note that \ref{Tensor 1} and \ref{Tensor 2} determines $-\otimes_{\cC}-$ uniquely, and \ref{Hom 1} and \ref{Hom 2} determines $\Hom_{\cC}(-,-)$ uniquely. If $\cC=k$ we get functors $-\otimes_{k}- \colon (\md\text{-} k) \otimes \cB \to \cB$ and $\Hom_{k}(-.-) \colon (\md\text{-} k)\op \otimes \cB \to \cB$, respectively.

Now let $\cC_1$ and $\cC_2$ be small $k$-linear categories. Assume $M\in \Md\text{-} (\cC_1\otimes \cC_2\op)$ satisfies $M(c_1,-)\in \md\text{-} \cC_2\op$ and $M(-,c_2)\in \md \text{-}\cC_1$ for all $c_1\in \cC_1$ and $c_2\in \cC_2$. We then have functors 
\[
M\otimes_{\cC_1}-\colon \cB^{\cC_1}\to \cB^{\cC_2} \quad \text{and} \quad \Hom_{\cC_2}(M,-)\colon \cB^{\cC_2}\to \cB^{\cC_1}
\]
and an isomorphism 
\begin{equation}\label{Adjunction}
\cB^{\cC_2}(M\otimes_{\cC_1} F, G)\cong \cB^{\cC_1}(F,\Hom_{\cC_2}(M,G)).
\end{equation}
natural in $F\in \cB^{\cC_1}$, $G\in \cB^{\cC_2}$ and $M$. It follows that $M\otimes_{\cC_1}-$ is left adjoint to $\Hom_{\cC_2}(M,-)$. Finally, by (3.23) in \cite{Kel05} we have an isomorphism
\begin{equation}\label{Associativity}
N\otimes_{\cC_2}(M\otimes_{\cC_1} F)\cong (N\otimes_{\cC_2}M)\otimes_{\cC_1} F
\end{equation}
natural in $F\in \cB^{\cC_1}$, $N\in \md \cC_2$ and $M$.

\subsection{Properties of Hom-finite locally bounded categories}\label{Basic properties of Hom-finite locally bounded categories}

Here we use the same terminology as in \cite{DSS17}.
\begin{Definition}\label{Definition:12,5}
Let $\cC$ be a small $k$-linear category.
\begin{enumerate}
\item  $\cC$ is \emphbf{locally bounded} if for any object $c\in \cC$ there are only finitely many objects in $\cC$ mapping nontrivially in and out of $c$. This means that for each $c\in \cC$ we have
\[
\cC(c,c')\neq 0 \quad \text{for only finitely many } c'\in \cC
\]
and
\[
\cC(c'',c)\neq 0 \quad \text{for only finitely many } c''\in \cC;
\]
\item  $\cC$ is \emphbf{Hom-finite} if $\cC(c,c')\in \proj k$ for all $c,c'\in \cC$.
\end{enumerate}
\end{Definition}

\begin{Lemma}\label{Lemma:N5}
Let $\cC$ be a small, $k$-linear, locally bounded, and Hom-finite category. Assume that $M\in \Md\text{-}\cC$ satisfy 
\begin{align*}
& M(c)\in \proj k \text{ }\forall c\in \cC \\
& M(c)\neq 0 \text{ for only finitely many }c\in \cC.
\end{align*}
Then there exists an exact sequence 
\[
\cdots \to Q_2\to Q_1\to Q_0\to M\to 0
\]
where $Q_i$ is a finitely generated projective right $\cC$-module for all $i$. 
\end{Lemma}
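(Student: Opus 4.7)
The plan is to construct the resolution inductively by producing one finitely generated projective cover at a time, using the Yoneda lemma to build surjections and showing that the kernel at each step inherits all the hypotheses of the lemma. The entire argument will reduce to a single inductive step.

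For the base step I would construct a finitely generated projective $Q_0$ surjecting onto $M$ as follows. By hypothesis there are only finitely many $c \in \cC$ with $M(c) \neq 0$; call them $c_1,\ldots,c_n$. Each $M(c_i) \in \proj k$ is in particular a finitely generated $k$-module, so I can choose generators $m_{i,1},\ldots,m_{i,r_i} \in M(c_i)$. By the Yoneda lemma, each $m_{i,j}$ corresponds to a morphism $\cC(-,c_i) \to M$, and assembling these gives a morphism
\[
Q_0 := \bigoplus_{i=1}^n \cC(-,c_i)^{r_i} \longrightarrow M.
\]
This $Q_0$ is a finitely generated projective right $\cC$-module. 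At an object $c \notin \{c_1,\ldots,c_n\}$ the target is zero; at $c = c_j$ the identity morphism $1_{c_j} \in \cC(c_j,c_j)$ in the $j$-th summand maps to $m_{j,l}$, so the image contains a generating set of $M(c_j)$. Hence $Q_0 \to M$ is a componentwise epimorphism.

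The key inductive step is to check that $K := \ker(Q_0 \to M)$ again satisfies the hypotheses of the lemma. Hom-finiteness of $\cC$ gives $\cC(c,c_i) \in \proj k$, so $Q_0(c) = \bigoplus_i \cC(c,c_i)^{r_i} \in \proj k$ as a finite direct sum of finitely generated projective $k$-modules. Since $M(c) \in \proj k$, the short exact sequence $0 \to K(c) \to Q_0(c) \to M(c) \to 0$ of $k$-modules splits, so $K(c)$ is a direct summand of $Q_0(c)$ and therefore lies in $\proj k$. For the finite support condition: $K(c) \neq 0$ forces $Q_0(c) \neq 0$, i.e.\ $\cC(c,c_i) \neq 0$ for some $i$. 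By local boundedness of $\cC$, only finitely many $c$ map nontrivially into each $c_i$, and there are only finitely many $c_i$, so only finitely many $c$ satisfy $K(c) \neq 0$.

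Applying the same construction to $K$ produces a finitely generated projective $Q_1$ with $Q_1 \twoheadrightarrow K$, and splicing yields an exact sequence $Q_1 \to Q_0 \to M \to 0$. Iterating gives the desired resolution. I do not anticipate a genuine obstacle here: the whole point is that Hom-finiteness guarantees representables are in $\proj k$ pointwise, and local boundedness propagates the finite-support condition from $M$ to each syzygy, so the induction goes through without surprise.
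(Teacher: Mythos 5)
Your proof is correct and follows essentially the same approach as the paper: build a finitely generated projective cover from representables (the paper writes $\cC(-,c)\otimes_k k^{n_c}$ where you write $\cC(-,c_i)^{r_i}$, and uses the tensor--Hom adjunction where you invoke Yoneda directly, but these are the same construction), then verify that the kernel inherits both hypotheses via Hom-finiteness (splitting over $\proj k$) and local boundedness (finite support), and iterate.
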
 

\begin{proof}
Choose an epimorphism $p^c\colon k^{n_c}\to M(c)\to 0$ for each $c\in \cC$ with $M(c)\neq 0$, where $n_c\in \N$. Via the adjunction in \eqref{Adjunction} with $\cC_1=k$, $\cC=\cC_2\op$ and $\cB = \md k$ this corresponds to a morphism $\cC(-,c)\otimes_k k^{n_c}\xrightarrow{p_c} M$ in $\Md\text{-} \cC$. The induced map
\[
\bigoplus _{c\in \cC, \text{ } M(c)\neq 0}\cC(-,c)\otimes_k k^{n_{c}} \xrightarrow{\oplus p_c} M
\]
is then an epimorphism. Let $K$ be the kernel of this map. Then $K(c')\neq 0$ for only finitely many $c'\in \cC$ since the same holds for $\bigoplus _{c\in \cC, \text{ } M(c)\neq 0}\cC(-,c)\otimes_k k^{n_{c}}$. Also, $K(c')$ is the kernel of the epimorphism
\[
\bigoplus _{c\in \cC, \text{ } M(c)\neq 0}\cC(c',c)\otimes_k k^{n_{c}} \xrightarrow{\oplus q_c} M(c')
\]
and since $M(c')\in \proj k$ and $\bigoplus _{c\in \cC, \text{ } M(c)\neq 0}\cC(c',c)\otimes_k k^{n_{c}}\in \proj k$, we get that $K(c')\in \proj k$. Hence, $K$ satisfies the same properties as $M$. We can therefore repeat this construction, which proves the claim.
\end{proof}

In the following we set $D:=\Hom_k(-,k)$. For $B\in \cB$ the functors 
\[
D(-)\otimes_k B\colon (\proj k)\op \to \cB \quad \text{and} \quad \Hom_k(-,B)\colon (\proj k)\op \to \cB
\]
both send $k$ to $B$. Hence, we get an isomorphism
\begin{equation}\label{Dual}
D(V)\otimes_k B \cong \Hom_k(V,B)
\end{equation}
in $\cB$ when $V$ is a finitely generated projective $k$-module. 

\begin{Lemma}\label{Lemma:22}
Assume $\cC$ is a small, $k$-linear, locally bounded and Hom-finite category. Choose an object $c\in \cC$. The following holds:
\begin{enumerate}
	\item\label{Lemma:22,1} $D(\cC(c,-))$ is a finitely presented right $\cC$-module;
	\item\label{Lemma:22,2} We have an isomorphism
	\[
	D(\cC(-,c))\otimes_k B \cong \Hom_k(\cC(-,c),B)
	\]
	in $\cB^{\cC}$ for all $B\in \cB$;
	\item\label{Lemma:22,3} We have an isomorphism
	\[
	\cC(c,-)\otimes_k B \cong \Hom_k(D(\cC(c,-)),B)
	\]
	in $\cB^{\cC}$ for all $B\in \cB$.
\end{enumerate}
\end{Lemma}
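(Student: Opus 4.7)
The proof splits naturally according to the three claims. The plan is to deduce (i) from Lemma \ref{Lemma:N5} and to deduce (ii) and (iii) from the identification \eqref{Dual} applied pointwise.

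For (i), I would first observe that $D(\cC(c,-))$ is a right $\cC$-module: the covariant functor $\cC(c,-)\colon \cC \to \md\text{-}k$ becomes contravariant after applying $D$. Its values $D(\cC(c,c'))$ lie in $\proj k$, since each $\cC(c,c') \in \proj k$ by Hom-finiteness and the $k$-dual preserves finitely generated projective modules. Local boundedness of $\cC$ forces $\cC(c,c') = 0$ for all but finitely many $c'$, hence $D(\cC(c,c')) = 0$ outside a finite subset of objects. Thus $D(\cC(c,-))$ satisfies the hypotheses of Lemma \ref{Lemma:N5}, which yields a resolution by finitely generated projective right $\cC$-modules; in particular $D(\cC(c,-))$ is finitely presented.

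For (ii), the idea is to apply \eqref{Dual} pointwise. Evaluating at $c' \in \cC$, the two sides of the desired isomorphism become $D(\cC(c',c)) \otimes_k B$ and $\Hom_k(\cC(c',c), B)$, respectively. Since $\cC(c',c) \in \proj k$, the isomorphism \eqref{Dual} provides a canonical identification between these objects of $\cB$. Naturality of \eqref{Dual} in its first argument then promotes this pointwise isomorphism to a natural isomorphism of functors $\cC \to \cB$, i.e.\ an isomorphism in $\cB^{\cC}$.

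Part (iii) proceeds by the same mechanism, but invokes reflexivity of finitely generated projective $k$-modules: for any $V \in \proj k$, the canonical map $V \to D(D(V))$ is an isomorphism. Taking $V = \cC(c,c')$ and applying \eqref{Dual} to $D(V) \in \proj k$, we get, pointwise,
\[
\cC(c,c') \otimes_k B \;\cong\; D(D(\cC(c,c'))) \otimes_k B \;\cong\; \Hom_k(D(\cC(c,c')), B),
\]
and naturality in $c'$ yields the desired isomorphism in $\cB^{\cC}$. I do not expect a substantial obstacle here; the only point where care is needed is verifying that the ``pointwise'' tensor and Hom constructions appearing in the statement coincide with the global functors set up in Subsection \ref{Hom and tensor functor}, so that naturality of \eqref{Dual} produces an honest isomorphism in $\cB^{\cC}$ rather than merely a family of isomorphisms indexed by $c'$. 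The substantive inputs are exactly Lemma \ref{Lemma:N5} for (i) and \eqref{Dual} (together with reflexivity) for (ii) and (iii).
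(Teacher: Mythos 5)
Your proof is correct and matches the paper's approach exactly: the paper simply cites Lemma \ref{Lemma:N5} for part (i) and the isomorphism \eqref{Dual} for parts (ii) and (iii), and your proposal fills in precisely the details that make those citations work (Hom-finiteness plus local boundedness to verify the hypotheses of Lemma \ref{Lemma:N5}, pointwise application of \eqref{Dual} plus naturality, and reflexivity of finitely generated projectives to convert \eqref{Dual} into the form needed for (iii)).
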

\begin{proof}
Statement $\ref{Lemma:22,1}$ follows from Lemma \ref{Lemma:N5}. Statement $\ref{Lemma:22,2}$ and $\ref{Lemma:22,3}$ follow from the isomorphism in \eqref{Dual}.
\end{proof}

\subsection{Nakayama functor for \texorpdfstring{$i_!\dashv i^*$}{} }\label{Comonad with Nakayama functor on}

In this subsection we fix a small, $k$-linear, locally bounded, and Hom-finite category $\cC$. Let $k(\ob\text{-} \cC)$ be the category with the same objects as $\cC$, and with morphisms
\[
k(\ob \text{-}\cC)(c_1,c_2) =
\begin{cases}
0 & \text{if $c_1\neq c_2$},\\
k & \text{if $c_1=c_2$}.
\end{cases}
\]
The functor category $\cB^{k(\ob\text{-} \cC)}$ is just a product of copies of $\cB$, indexed over the objects of $\cC$. Let $i\colon k(\ob\text{-} \cC)\to \cC$ be the inclusion. We have functors 
\begin{align*}
& i_!\colon \cB^{k(\ob\text{-} \cC)}\to \cB^{\cC} \quad i_!((B^c)_{c\in \cC})= \bigoplus_{c\in \cC}\cC(c,-)\otimes_k B^c \\
& i^*\colon \cB^{\cC}\to \cB^{k(\ob\text{-} \cC)} \quad \quad i^*(F)= (F(c))_{c\in \cC} \\
& i_*\colon \cB^{k(\ob\text{-} \cC)}\to \cB^{\cC} \quad i_*((B^c)_{c\in \cC})= \prod_{c\in \cC}\Hom_k(\cC(-,c), B^c). 
\end{align*}

Note that the functors $i_!$ and $i_*$ are well defined since $\cC$ is locally bounded. In fact, evaluating $i_!((B^c)_{c\in \cC})$ and $i_*((B^c)_{c\in \cC})$ on an object in $\cC$ gives a finite sum, and since limits are taken pointwise in $\cB^{\cC}$, it follows that
\begin{equation}\label{Equation:3}
i_!((B^c)_{c\in \cC})= \bigoplus_{c\in \cC}\cC(c,-)\otimes_k B^c= \prod_{c\in \cC}\cC(c,-)\otimes_k B^c
\end{equation}
and
\begin{equation}\label{Equation:4}
i_*((B^c)_{c\in \cC}) = \prod_{c\in \cC}\Hom_k(\cC(c,-), B^c)= \bigoplus_{c\in \cC}\Hom_k(\cC(-,c), B^c).
\end{equation}
Also, $\Hom_{\cC}(\cC(c,-),F)=F(c)=\cC(-,c)\otimes_{\cC}F$, and hence by \eqref{Adjunction} we get that $i^*$ is right adjoint to $i_!$ and left adjoint to $i_*$. Finally, we have functors
\begin{align*}
& \nu\colon \cB^{\cC}\to \cB^{\cC} \quad \nu(F)= D(\cC)\otimes_{\cC}F \\
& \nu^-\colon \cB^{\cC}\to \cB^{\cC} \quad \nu^-(F)= \Hom_{\cC}(D(\cC),F)
\end{align*}
where 
\begin{align*}
& (D(\cC)\otimes_{\cC}F)(c)= D(\cC(c,-))\otimes_{\cC}F \\
& \Hom_{\cC}(D(\cC),F)(c)=\Hom_{\cC}(D(\cC(-,c)),F).
\end{align*}
It follows from \eqref{Adjunction} that $\nu$ is left adjoint to $\nu^-$. 

\begin{Theorem}\label{Theorem:5}
The functor $\nu\colon \cB^{\cC}\to \cB^{\cC}$ is a Nakayama functor relative to $i_!\dashv i^*$
\end{Theorem}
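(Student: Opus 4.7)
The plan is to apply Corollary~\ref{Computing the Nakayama functor}\,(i), which reduces the theorem to showing that $\nu$ is right exact, that $\nu \circ i_! \cong i_*$, and that the restriction $\nu|_{\im i_!}\colon \im i_!\to \im i_*$ is an isomorphism of categories. Right exactness of $\nu = D(\cC)\otimes_\cC(-)$ is automatic since it has right adjoint $\nu^-$.

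For the isomorphism $\nu\circ i_!\cong i_*$, I first use the associativity~\eqref{Associativity} (with $\cC_1=k$ and $\cC_2=\cC$) to obtain
\[
\nu(\cC(c,-)\otimes_k B)\cong (D(\cC)\otimes_\cC \cC(c,-))\otimes_k B.
\]
The value of $D(\cC)\otimes_\cC\cC(c,-)$ at $c'\in\cC$ is $D(\cC(c',-))\otimes_\cC \cC(c,-)\cong D(\cC(c',c))$ by the right-module dual of property~\ref{Tensor 2}, so $D(\cC)\otimes_\cC \cC(c,-)\cong D(\cC(-,c))$ as left $\cC$-modules. Lemma~\ref{Lemma:22}\,\ref{Lemma:22,2} then gives $D(\cC(-,c))\otimes_k B\cong \Hom_k(\cC(-,c),B)$. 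Since $\nu$ preserves coproducts and the sums in~\eqref{Equation:3} and~\eqref{Equation:4} are pointwise finite by local boundedness, summing over $c$ produces the desired natural isomorphism.

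For the statement about $\nu|_{\im i_!}$, I would first establish the adjunction $i^*\circ\nu\dashv i_!$ via the chain
\begin{align*}
\cB^{\cC}(F,i_!X)
&\cong \prod_c \cB^{\cC}(F, \cC(c,-)\otimes_k X^c) \\
&\cong \prod_c \cB^{\cC}(F,\Hom_k(D(\cC(c,-)),X^c)) \\
&\cong \prod_c \cB(D(\cC(c,-))\otimes_\cC F,X^c) \\
&= \cB^{k(\ob\text{-}\cC)}(i^*\nu F,X),
\end{align*}
obtained from~\eqref{Equation:3} together with the fact that $\cB^{\cC}(F,-)$ commutes with products, Lemma~\ref{Lemma:22}\,\ref{Lemma:22,3}, and the adjunction~\eqref{Adjunction}. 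Combined with $\nu i_!\cong i_*$ and $i^*\dashv i_*$, this yields natural bijections
\[
\cB^{\cC}(i_!X,i_!Y)\cong \cB^{k(\ob\text{-}\cC)}(i^*\nu i_!X,Y)\cong \cB^{k(\ob\text{-}\cC)}(i^*i_*X,Y)\cong \cB^{\cC}(i_*X,i_*Y),
\]
and unwinding the counits shows this composite is the action $f\mapsto \nu(f)$. Together with essential surjectivity from $\nu i_!\cong i_*$, this proves $\nu|_{\im i_!}$ is an isomorphism of categories.

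The main obstacle is the careful bookkeeping with bimodule structures in the associativity step and in particular verifying that the composite bijection in the final step is genuinely the action of $\nu$, which reduces to checking that the counits of $i^*\nu\dashv i_!$ and $i^*\dashv i_*$ are compatible under the identification $\nu i_!\cong i_*$.
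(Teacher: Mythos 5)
Your proposal is correct in its essential content and takes a genuinely different route from the paper. Instead of verifying conditions (1) and (2) of Definition~\ref{Nakayama functor for adjoint pair} directly, you reduce to Corollary~\ref{Computing the Nakayama functor}\,(i) and verify the three hypotheses there. Right exactness of $\nu$ and the isomorphism $\nu\circ i_!\cong i_*$ are handled essentially as in the paper (you go through associativity~\eqref{Associativity} rather than invoking Lemma~\ref{Lemma:22}\,\ref{Lemma:22,2} immediately, but the content is the same). The real divergence is in the last step. The paper verifies condition~(2) of Definition~\ref{Nakayama functor for adjoint pair} by computing a natural isomorphism $\nu^-\nu i_!\cong i_!$ (using Lemma~\ref{Lemma:22}\,\ref{Lemma:22,3}) and asserting that it inverts the unit of $\nu\dashv\nu^-$; you instead build an adjunction $i^*\nu\dashv i_!$ from scratch and deduce bijectivity of $\cB^{\cC}(i_!X,i_!Y)\to\cB^{\cC}(i_*X,i_*Y)$ from a chain of adjunction isomorphisms. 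Your construction of $i^*\nu\dashv i_!$ is correct, and it is a pleasant bonus that it gives the adjunction of Lemma~\ref{Basic properties of Nakayama functor adjoint pairs}\,\ref{on adjunctions} independently.

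Both approaches leave a compatibility check implicit: you need that the composite bijection really is $f\mapsto\nu(f)$ under the identification $\nu i_!\cong i_*$, which, as you say, amounts to matching the counit of your constructed adjunction $i^*\nu\dashv i_!$ with the counit of $i^*\dashv i_*$ via the isomorphism $\iota\colon \nu i_!\to i_*$; concretely it reduces to the identity $i_*(\beta^{i^*\nu\dashv i_!}_Y)\circ\alpha^{i^*\dashv i_*}_{\nu i_!Y}=\iota_Y$. This is the exact analogue of the paper's unverified assertion that its isomorphism $\nu^-\nu i_!\cong i_!$ is inverse to the unit $\lambda_{i_!}$. In both cases the assertion boils down to the fact that all the isomorphisms in play are instances of the same biduality $D(V)\otimes_k B\cong\Hom_k(V,B)$ for $V\in\proj k$ and its consequences, so the check is routine but not trivial; it would be good to actually carry it out. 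One small wording point: $\im i_!$ is not a subcategory of $\cB^{\cC}$ but a category with objects $\ob\,\cB^{k(\ob\text{-}\cC)}$ through which $i_!$ factors, so "essential surjectivity" is not quite the right phrase in your last step; the functor $\nu|_{\im i_!}$ is automatically a bijection on objects and the only issue is bijectivity on Hom-sets.
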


\begin{proof}
For $(B^c)_{c\in \cC}\in \cB^{k(\ob\text{-}\cC)}$ we have natural isomorphisms
\begin{align*}
\nu \circ i_!((B^c)_{c\in \cC})= & \nu (\bigoplus_{c\in \cC}\cC(c,-)\otimes_k B^c) \cong \bigoplus_{c\in \cC}D(\cC(-,c))\otimes_k B^c \\
& \cong \bigoplus_{c\in \cC}\Hom_k(\cC(-,c), B^c) \cong i_*((B^c)_{c\in \cC})
\end{align*}
where the second last isomorphisms follows from Lemma \ref{Lemma:22} part $\ref{Lemma:22,2}$. Hence, $\nu \circ i_!\cong i_*$, and $\nu \circ i_!$ is therefore right adjoint to $i^*$. Also, we have natural isomorphisms 
\begin{multline*}
\nu^-\circ \nu\circ i_! ((B^c)_{c\in \cC}) = \Hom_{\cC}(D(\cC),\bigoplus_{c\in \cC}\Hom_k(\cC(-,c),B^c)) \\ 
\cong \bigoplus_{c\in \cC}\Hom_{\cC}(D(\cC),\Hom_k(\cC(-,c),B^c)) \\
\cong \bigoplus_{c\in \cC}\Hom_k(D(\cC(c,-)),B^c) \cong \bigoplus_{c\in \cC} \cC(c,-)\otimes_k B^c = i_!((B^c)_{c\in \cC}).   
\end{multline*}
This gives an inverse to the unit map $\unit{\nu}{\nu^-}_{i_!}\colon i_!\to \nu^-\circ \nu\circ i_!$. Hence, the claim follows.

\end{proof} 

See Example \ref{Complexes} and Example \ref{Morphism category} for concrete choices of $\cC$. In fact there are interesting examples even when $\cC$ has only one object. In particular, if $\cC=\Lambda$ is a finite-dimensional algebra, then the theorem above applies to the module categories
\[
\Lambda\text{-}\md=(k\text{-}\md)^{\Lambda} \quad \text{and} \quad \Lambda\text{-}\Md=(k\text{-}\Md)^{\Lambda}
\]
see Example \ref{One algebra}. More generally, if $k$ is a commutative ring and $\Lambda$ is a $k$-algebra which is finitely generated projective as a $k$-module, then Theorem \ref{Theorem:5} holds for $\Lambda\text{-}\Md=(k\text{-}\Md)^{\Lambda}$, see Example \ref{One algebra}. If in addition $\Lambda'$ is a $k$-algebra, then the theorem also applies to 
\[
\Lambda'\otimes_k \Lambda\text{-}\Md=(\Lambda'\text{-}\Md)^{\Lambda} 
\]
see Example \ref{Two algebras}. If we furthermore assume that $k$ is coherent or $\Lambda'$ is left coherent then it also holds for the categories of finitely presented modules
\[
\Lambda\text{-}\md=(k\text{-}\md)^{\Lambda} \quad \text{or} \quad \Lambda'\otimes_k \Lambda\text{-}\md=(\Lambda'\text{-}\md)^{\Lambda}
\] 
 see Example \ref{One algebra} and Example \ref{Two algebras}, respectively. 

We now apply Theorem \ref{thm:N2} to Example \ref{Functor category example}.

\begin{Theorem}\label{thm:N3}
Let $k$ be a commutative ring, and let $\cC$ be a small, $k$-linear, locally bounded, and Hom-finite category. Assume 
\[
  \sup_{c\in \cC}(\pdim D(\cC(-,c)))<\infty \quad \text{and} \quad \sup_{c\in \cC}(\pdim D(\cC(c,-)))<\infty
\] 
as left and right $\cC$-modules respectively. Then
\[
\sup_{c\in \cC}(\pdim D(\cC(-,c))) = \sup_{c\in \cC}(\pdim D(\cC(c,-))).
\]
and this number is equal to the Gorenstein dimension of the functor $P=i_!\circ i^*$ on $\cC\text{-}\Md$.
\end{Theorem}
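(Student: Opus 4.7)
The plan is to identify $L_i\nu$ and $R^i\nu^-$ with familiar Tor and Ext over $\cC$, translate the vanishing conditions appearing in Theorem~\ref{thm:N2} into projective dimensions of $D(\cC(c,-))$ and $D(\cC(-,c))$, and then quote Theorem~\ref{thm:N2}. First I would observe that for each $c\in \cC$ the evaluation functor $\ev_c\colon \cC\text{-}\Md\to k\text{-}\Md$ is exact, that $\ev_c\circ \nu = D(\cC(c,-))\otimes_{\cC}-$, and that $\ev_c\circ \nu^- = \Hom_{\cC}(D(\cC(-,c)),-)$, directly from the definitions in Subsection~\ref{Comonad with Nakayama functor on}. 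Since each representable $\cC(c,-) = i_!((k)_c)$ lies in $\im i_!\subset \im P$, it is $P$-projective and hence $\nu$-acyclic by Lemma~\ref{lemma:N1}\ref{lemma:N1,5}. In particular, the standard projective resolutions in $\cC\text{-}\Md$ (direct sums of representables) are $\nu$-acyclic, and combining this with exactness of $\ev_c$ and the universal property of derived functors yields
\[
(L_i\nu)(F)(c) \cong \Tor^{\cC}_i\bigl(D(\cC(c,-)),F\bigr) \quad\text{and}\quad (R^i\nu^-)(F)(c)\cong \Ext^i_{\cC}\bigl(D(\cC(-,c)),F\bigr)
\]
for every $c\in \cC$, $i\geq 0$, and $F\in \cC\text{-}\Md$.

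Next, write $a:=\sup_c\pdim D(\cC(c,-))$ and $b:=\sup_c\pdim D(\cC(-,c))$. The Ext identification immediately gives that the smallest $t$ with $R^i\nu^-=0$ for $i>t$ equals $b$. The Tor identification only shows that the smallest $s$ with $L_i\nu=0$ for $i>s$ equals $\sup_c\flatdim D(\cC(c,-))$, so one still has to argue this coincides with $a$. By Lemma~\ref{Lemma:22}\ref{Lemma:22,1} the right $\cC$-module $D(\cC(c,-))$ is finitely presented, and Lemma~\ref{Lemma:N5} provides a resolution of it by finitely generated projective right $\cC$-modules. The $s$-th syzygy $K_s$ in such a resolution is again finitely presented (being the cokernel of a map between finitely generated projectives) and is flat whenever $\flatdim D(\cC(c,-))\leq s$. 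A finitely presented flat module over any small additive category is projective (Lazard's theorem), so $K_s$ is projective; truncating the resolution then gives $\pdim D(\cC(c,-))\leq s$, and the smallest $s$ also equals $a$.

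The hypotheses force $a,b<\infty$, so $P$ is Iwanaga-Gorenstein (Definition~\ref{definition:N2} with $m=\max(a,b)$). Theorem~\ref{thm:N2} then collapses the smallest such $s$, the smallest such $t$, and the Gorenstein dimension of $P$ into a single integer; by the previous paragraph this integer equals both $a$ and $b$, giving the displayed equality and the identification with the Gorenstein dimension. The main obstacle I foresee is precisely the flat-to-projective passage in the middle paragraph, where the Hom-finiteness and local boundedness of $\cC$ really enter through Lemmas~\ref{Lemma:22} and~\ref{Lemma:N5}; the remainder is a formal consequence of the Tor/Ext interpretations and Theorem~\ref{thm:N2}.
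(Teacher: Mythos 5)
Your proposal is correct and follows essentially the same route as the paper: identify $L_i\nu$ with $\Tor^{\cC}_i(D(\cC(c,-)),-)$ and $R^i\nu^-$ with $\Ext^i_{\cC}(D(\cC(-,c)),-)$, use Lemma~\ref{Lemma:N5} together with the fact that finitely presented flat modules are projective to upgrade flat dimension to projective dimension, and then invoke Theorem~\ref{thm:N2}. One small inaccuracy worth flagging: $\im i_! \subset \im P$ need not hold as stated (since $i^*$ is not surjective on objects in general), but $\cC(c,-)=i_!((k)_c)$ is a direct summand of $P(\cC(c,-))$ by the triangle identities, hence $P$-projective, which is all your argument actually uses.
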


\begin{proof}
By assumption the functor $P=i_!\circ i^*$ on $\cC\text{-}\Md=(k\text{-}\Md)^{\cC}$ is Iwanaga-Gorenstein. Hence, by Theorem \ref{thm:N2} we have that $P$ is $n$-Gorenstein for some $n\in \N$. It follows that
\begin{align*}
& \Ext^n_{\cC\text{-}\Md}(D\cC,-)\neq 0 \quad \text{and} \quad \Ext^i_{\cC\text{-}\Md}(D\cC,-)= 0 \text{ for } i>n \\
& \Tor_n^{\cC}(D\cC,-)\neq 0 \quad \text{and} \quad \Tor_i^{\cC}(D\cC,-)= 0 \text{ for } i>n.
\end{align*}
We therefore get that
\[
\sup_{c\in \cC}(\pdim D(\cC(-,c)))= n =\sup_{c\in \cC}(\flatdim D(\cC(c,-))).
\]
On the other hand, by Lemma \ref{Lemma:N5} there exists an $n$th syzygy of $D(\cC(c,-))$ which is finitely presented. Since finitely presented flat modules are projective, it follows that $\flatdim D(\cC(c,-))= \pdim D(\cC(c,-))$. This proves the claim.
\end{proof}

\begin{Remark}\label{remark:N1}
Locally bounded Hom-finite categories are one of the main object of study \cite{DSS17}. In \cite[Theorem 4.6]{DSS17} they assume that $\cC$ has a Serre functor relative to $k$. This implies that the functor $P$ on $\cC\text{-}\Md$ is $0$-Gorenstein.
\end{Remark}

\bibliography{Mybibtex}
\bibliographystyle{plain} 

\end{document}